\numberwithin{equation}{section}
\DeclareMathOperator{\E}{\mathbb{E}}
\DeclareMathOperator{\Ber}{Ber}
\DeclareMathOperator{\Var}{Var}
\DeclareMathOperator{\Binom}{Binom}
\renewcommand{\Pr}[2][]{\mathbb{P}_{#1} \left\{ #2 \rule{0mm}{3mm}\right\}}
\newcommand{\ip}[2]{\langle#1,#2\rangle}
\def \P {\mathbb{P}}
\def \R {\mathbb{R}}
\def \EE {\mathcal{E}}
\def \GG {\mathcal{G}}
\def \FF {\mathcal{F}}
\def \PP {\mathcal{P}}
\def \RR {\mathcal{R}}
\def \BB {\mathcal{B}}
\def \e {\varepsilon}
\def \d {\delta}
\def \s {\sigma}
\def \tran {\mathsf{T}}
\def \one {{\textbf 1}}
\def \ind {{\mathds 1}}
\newtheorem{theorem}{Theorem}[section]
\newtheorem{proposition}[theorem]{Proposition}
\newtheorem{corollary}[theorem]{Corollary}
\newtheorem{lemma}[theorem]{Lemma}
\theoremstyle{remark}
\newtheorem{remark}[theorem]{Remark}
\begin{document}

\title{Norms of random matrices: local and global problems}
\author{Elizaveta Rebrova \and Roman Vershynin}

\address{Department of Mathematics, University of Michigan, 530 Church St, Ann Arbor, MI 48109, U.S.A.}
\email{erebrova@umich.edu, romanv@umich.edu}

\thanks{R. V. is partially supported by NSF grant 1265782 and U.S. Air Force grant FA9550-14-1-0009.}

\begin{abstract}
Can the behavior of a random matrix be improved by modifying a small fraction of its entries?
Consider a random matrix $A$ with i.i.d. entries.
We show that the operator norm of $A$ can be reduced to the optimal order $O(\sqrt{n})$ 
by zeroing out a small submatrix of $A$ if and only if the entries have zero mean and finite variance. 
Moreover, we obtain an almost optimal dependence 
between the size of the removed submatrix and the resulting operator norm.
Our approach utilizes the cut norm and Grothendieck-Pietsch factorization for matrices, and it combines 
the methods developed recently by C.~Le and R.~Vershynin and by E.~Rebrova and K.~Tikhomirov.
\end{abstract}

\maketitle

\section{Introduction}
%===================

\subsection{Local and global problems}		\label{s: local global}
%-----------
When a certain mathematical or scientific structure fails to meet reasonable expectations, 
one often wonders: is this a local or global problem? 
In other words, is the failure caused by some small, localized part of the structure, and if so,
can this part be identified and repaired? Or, alternatively, is the structure entirely, globally bad?
Many results in mathematics can be understood as either local or global statements. 
For example, not every measurable function $f : \R \to \R$ is continuous, but 
Lusin's theorem implies that $f$ can always be made continuous by changing its values on a set 
of arbitrarily small measure. Thus, imposing continuity is a local problem. 
On the other hand, a continuous function may not be differentiable, and there
even exist continuous and nowhere differentiable functions. Thus imposing differentiability may 
be a global problem. In statistics, the notion of {\em outliers} -- 
small, pathological subsets of data, the removal of which makes data better --
points to local problems.

\subsection{Random matrices and their norms}
%-----------
In this paper we ask: is bounding the norm of a random matrix a local or a global problem? 
To be specific, consider $n \times n$ random matrices $A$
with independent and identically distributed (i.i.d.) entries. The {\em operator norm} of $A$ is 
defined by considering $A$ as a linear operator on $\R^n$ equipped with the Euclidean norm $\|\cdot\|_2$, i.e. 
$$
\|A\| = \max_{x \ne 0} \frac{\|Ax\|_2}{\|x\|_2}.
$$
%Equivalently, $\|A\|$ is the largest singular value of $A$.

Suppose that the entries of $A$ have zero mean and bounded fourth moment, 
i.e. $\E A_{ij}^4 = O(1)$. Then, as it was shown in \cite{Bai-Yin-Kri},
$$
\|A\| = (2+o(1)) \sqrt{n}
$$
with high probability. Note that the order $\sqrt{n}$ is the best we can generally hope for. Indeed, if the entries of $A$ have unit variance, then the typical magnitude of the Euclidean norm of a row of $A$ is $\sim \sqrt{n}$, 
and the operator norm of $A$ can not be smaller than that.  
Moreover, by \cite{BSY, Silv} the bounded fourth moment assumption is nearly necessary\footnote{For almost surely convergence of $\|A\|/\sqrt{n}$, fourth moment is necessary and sufficient \cite{BSY}, while for convergence in probability the weak fourth moment is necessary and sufficient~\cite{Silv}.}
for the bound
\begin{equation}         \label{eq: sqrtn bound}
\|A\| = O(\sqrt{n}).
\end{equation}
%-----------------
%
%Whereas finite fourth moment is necessary and sufficient condition for the almost sure convergence of the norms of $n \times n$ matrices when $n$ goes to infinity. Namely, consider an infinite array of i.i.d. random variables $\{A_{ij}: i, j = 1, 2, \ldots\}$, and the sequence of $n \times n$ random matrices $A_n := (A_{ij})_{i,j = 1, \ldots, n}$. Then $\E A_{11}^4 = \infty$ implies a.s. $\limsup \|A_n\|^2/n= \infty$ (see \cite{BSY}) and $\E A_{11} = 0$, $\E A_{11}^4 < \infty$ imply a.s. $\lim \|A_n\|^2/n = 4\E A^2_{11}$ (see \cite{Bai-Yin-Kri}).
%
%-----------------
%
%% to cut
%Whereas mean zero and bounded fourth moment $\E A_{ij}^4 \le C$ are necessary and sufficient for the almost sure convergence of $n^{-1/2}\|A\|$ to a constant, as $n$ goes to infinity (see \cite{Bai-Yin-Kri, BSY}). 
%
%-----------------
%
A number of quantitative and more general versions of these bounds are known \cite{Seginer, Latala, Vu, BvH, vH, vH structured}.

\subsection{Main results}
%--------------------

Now let us postulate nothing at all about the distribution of the i.i.d. entries of $A$. It still makes sense to ask: 
{\em is enforcing the ideal bound \eqref{eq: sqrtn bound} for random matrices a local or a global problem}?
That is, can we enforce the bound \eqref{eq: sqrtn bound} by modifying the entries in a small submatrix of $A$?
We will show in this paper that this is possible if and only if the entries of $A$ have zero moment and finite variance.
The ``if'' part is covered by the following theorem.

\begin{theorem}[Local problem]  	\label{main}
  Consider an $n \times n$ random matrix $A$ with i.i.d. entries that have zero mean and unit variance,
  and let $\e \in (0,1/6]$. Then, with probability at least $1 - 7\exp(- \e n/12)$, 
  there exists an $\e n \times \e n$ submatrix of $A$ such that replacing all of its entries with zero 
  leads to a well-bounded matrix $\tilde{A}$: 
  $$
  \|\tilde{A}\| \leq \frac{C \ln \e^{-1}}{\sqrt{\e}} \cdot \sqrt{n},
  $$
    where $C$ is a sufficiently large absolute constant. 
\end{theorem}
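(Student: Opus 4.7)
The strategy follows the outline hinted at in the abstract: combine a truncation of the entries, a Le--Vershynin style regularization of rows and columns that contain too many heavy entries, and the Grothendieck--Pietsch factorization argument of Rebrova--Tikhomirov which converts cut-norm bounds into operator-norm bounds.

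First I would fix a truncation level $K$ of order $\sqrt n$ and split $A = A_\flat + A_\sharp$, where $A_\flat$ retains the entries satisfying $|A_{ij}| \leq K$ and $A_\sharp = A - A_\flat$ collects the heavy ones. The centered matrix $A_\flat - \E A_\flat$ has bounded, unit-variance entries, so Bai--Yin (or a direct matrix Bernstein estimate) yields $\|A_\flat - \E A_\flat\| = O(\sqrt n)$ with high probability, while $\E A_\flat$ is a rank-one correction of negligible operator norm; thus no submatrix removal is needed to control $A_\flat$. The heavy part $A_\sharp$ is on the other hand sparse: Chebyshev gives $\P{|A_{ij}| > K} \leq K^{-2}$, so the expected number of large entries per row is $O(1)$. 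Following Le--Vershynin, I would absorb into the removed submatrix all rows and columns of $A_\sharp$ that contain anomalously many heavy entries; Chernoff bounds together with row/column independence show that at most $\varepsilon n / 3$ such rows and $\varepsilon n / 3$ such columns arise, with probability $1 - \exp(-c\varepsilon n)$. After this step the residual matrix $A_\sharp'$ has at most $O(\log \varepsilon^{-1})$ large entries per row and column, and its cut norm $\|A_\sharp'\|_\square = \max_{I,J} |\sum_{i \in I,\, j \in J} (A_\sharp')_{ij}|$ can be bounded by concentration together with a union bound over index subsets.

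The final and most delicate step is to convert this cut-norm estimate into an operator-norm estimate. Here I would invoke a Grothendieck--Pietsch factorization lemma (as exploited by Rebrova--Tikhomirov) producing nonnegative row and column weights summing to one, and corresponding diagonal rescalings $D_1, D_2$, for which $\|D_1 A_\sharp' D_2\|$ is controlled by $\|A_\sharp'\|_\square$ up to a logarithmic loss. The indices on which the weights are anomalously small number at most $\varepsilon n / 3$ in each of the row and column directions; zeroing them into the removed submatrix then converts the rescaled bound back into a bound on $\|A_\sharp'\|$ itself. The main technical obstacle is to manage these three sources of loss, namely the concentration of the cut norm, the union bound over exponentially many subsets, and the weight truncation in the Grothendieck--Pietsch step, inside a single $\varepsilon n \times \varepsilon n$ budget while keeping the final scaling $\log(\varepsilon^{-1}) \sqrt{n/\varepsilon}$; the $1/\sqrt\varepsilon$ factor is forced by the Grothendieck--Pietsch step and the logarithmic factor by the nested concentration and union-bound losses.
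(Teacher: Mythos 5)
Your proposal contains a fundamental gap in the treatment of the truncated part $A_\flat$. You claim that because the entries of $A_\flat - \E A_\flat$ are bounded (by $K \sim \sqrt n$) and have unit variance, Bai--Yin or matrix Bernstein gives $\|A_\flat - \E A_\flat\| = O(\sqrt n)$ with no removal. This is false, and in fact the counterexample is already discussed in Section~\ref{s: 2plusepsilon} of the paper: take $A_{ij} = \pm\sqrt n$ each with probability $1/2n$ and $0$ otherwise. All entries satisfy $|A_{ij}| \le \sqrt n$, so $A_\flat = A$ for your truncation, and the entries have zero mean and unit variance; yet with high probability some row has $\sim \log n / \log\log n$ nonzero entries, giving $\|A\| \gtrsim \sqrt{n\log n/\log\log n} \gg \sqrt n$. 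Bai--Yin needs a \emph{uniform} fourth-moment bound $\E A_{ij}^4 = O(1)$, whereas here $\E A_{ij}^4 \sim n$; matrix Bernstein (as the paper notes in Section~\ref{s: related}) produces an extra $\log n$ factor. So the bounded part cannot be dismissed --- it is precisely where the removal of a submatrix is unavoidable and where essentially all the work of the paper happens.

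This inverts the difficulty distribution in the actual proof. There, the bounded part $B$ (i.e., your $A_\flat$) is handled by the full chain: a damping argument (Theorem~\ref{thm: damping sum}) controls the $2\to\infty$ norm of $B$ after deleting a few columns (Lemma~\ref{rows_cut}); a delicate symmetrization argument lifts this to an $\infty\to 2$ bound after further column removal (Theorem~\ref{thm: infty to 2}); and Grothendieck--Pietsch (Theorem~\ref{G-P}) then converts the $\infty\to 2$ bound into the operator-norm bound, with the budget $\delta m \sim \e n$ producing the $1/\sqrt\e$ factor you observe. By contrast, the heavy parts are the easy parts: the moderately large entries $M$ (with $\sqrt n/2 < |A_{ij}| \le 5\sqrt{n/\e}$) are controlled by a counting argument (Lemma~\ref{lem: Bernoulli} / Corollary~\ref{cor: Bernoulli IJ removed}) showing that, after removing a small submatrix, each row and column has $O(\ln\e^{-1})$ such entries, and then Schur's $\ell_1$ bound (Lemma~\ref{lem: norm L1 rows cols}) finishes --- no cut norm and no Grothendieck--Pietsch are needed for this part; and the very large entries (beyond $5\sqrt{n/\e}$) are so few that they all fit in one $\e n\times \e n$ submatrix. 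Your allocation of the cut-norm and factorization machinery to the sparse heavy part, while treating $A_\flat$ as already fine, would not close.
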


\begin{remark}[Optimality]			\label{rem: optimality}
  The dependence on $\e$ in Theorem~\ref{main} is best possible up to the $\ln \e^{-1}$ factor. 
  To see this, let $p := 2\e/n$ and suppose $A_{ij}$ take values $\pm 1/\sqrt{p}$ with probability $p/2$ each
  and value $0$ with probability $1-p$. Then $A_{ij}$ have zero mean and unit variance as required.
  The expected number of non-zero entries in $A$ equals $pn^2 = 2\e n$.
  Thus the number of the rows of $A$ containing these entries is bigger than $\e n$ with high probability. 
  (This is a standard observation about the balls-into-bins model.) 
  Therefore, no $\e n \times \e n$ submatrix can contain all the non-zero entries of $A$. 
  In other words, $\tilde{A}$ must contain at least one non-zero entry of $A$, and thus it has magnitude
  $$
  \|\tilde{A}\| \ge \frac{1}{\sqrt{p}} \gtrsim \frac{\sqrt{n}}{\sqrt{\e}}.
  $$
  This shows that the dependence on $\e$ in Theorem~\ref{main} is almost optimal.
\end{remark}

By rescaling, a more general version of Theorem~\ref{main} holds for any finite variance of the entries. 
The two main assumptions in this theorem -- mean zero and finite variance -- are necessary in Theorem~\ref{main}.
Without either of them, the problem becomes global in a strong sense: the desired $O(\sqrt{n})$ bound
can not be achieved even after modifying a {\em large} submatrix. This is the content 
of the following result. 

\begin{theorem}[Global problem]\label{global}
  Consider an $n \times n$ random matrix $A_n$ whose entries are i.i.d. copies of a
  random variable that has either nonzero mean or infinite second moment,\footnote{Although this is a minor 
    terminological distinction, in this theorem we prefer to talk about second moment rather than variance. 
    This is because the second moment $\E X^2$ of a random variable $X$ 
    is always defined in the extended real line, while the variance $\Var(X) = \E(X-\E X)^2$ is undefined 
    if the mean $\E X$ is infinite.}
  and let $\e \in (0,1)$. Then 
  $$
  \min \frac{\|\tilde{A}_n\|}{\sqrt{n}} \to \infty \quad \text{as} \quad n \to \infty
  $$
  almost surely. Here the minimum is with respect to the matrices $\tilde{A}_n$ 
  obtained by any modification of any $\e n \times \e n$ submatrix of $A_n$.
\end{theorem}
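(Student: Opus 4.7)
My plan splits according to which hypothesis is active, but uses a common first move. If $\tilde A_n$ differs from $A_n$ only on a submatrix indexed by $I^c \times J^c$ of size $\varepsilon n \times \varepsilon n$, then the complementary block $A_n[I,J]$ is preserved, so $\|\tilde A_n\| \geq \|A_n[I,J]\|$. It thus suffices to show
$$
\min_{\substack{I,J \subseteq [n] \\ |I|=|J|=(1-\varepsilon)n}} \|A_n[I,J]\|\big/\sqrt n \to \infty \quad \text{almost surely.}
$$
I split into (a) finite nonzero mean $\mu := \E X_{11}$, and (b) $\E X_{11}^2 = \infty$, which together exhaust the two hypotheses (including the case of undefined mean, which falls under (b) since then $\E X_{11}^2 = \infty$).

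In case (a) the complementary assumption also forces finite variance, and I would test $\|A_n[I,J]\|$ against the constant vectors $\mathbf 1_I/\sqrt{|I|}$ and $\mathbf 1_J/\sqrt{|J|}$:
$$
\|A_n[I,J]\| \;\geq\; \frac{1}{\sqrt{|I||J|}} \sum_{i \in I,\, j \in J} A_{ij}.
$$
By the SLLN, $\sum_{i,j \in [n]} A_{ij} = (1+o(1))\mu n^2$ almost surely. The mass lost by restricting to $I \times J$ comes from at most $\varepsilon n$ rows and $\varepsilon n$ columns; finite variance plus concentration, combined with a union bound over the $\binom{n}{\varepsilon n}^2$ admissible pairs $(I,J)$, controls this uniformly below, say, $3\varepsilon|\mu| n^2$. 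Choosing $\varepsilon$ small then gives $\sum_{i \in I, j \in J} A_{ij} \geq c|\mu| n^2$ uniformly over admissible $(I,J)$, whence $\|A_n[I,J]\|/\sqrt n \geq c|\mu|\sqrt n \to \infty$.

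In case (b) I would exploit the abundance of atypically large entries produced by heavy tails. For a threshold $K > 0$ set $\mathcal E_K(n) := \{(i,j) \in [n]^2 : |A_{ij}| > K\sqrt n\}$ and view it as the edge set of a bipartite graph on $[n] \sqcup [n]$. If no union of $\varepsilon n$ rows and $\varepsilon n$ columns covers $\mathcal E_K(n)$, then some $(i,j) \in \mathcal E_K(n)$ lies in $I \times J$, forcing $\|A_n[I,J]\| > K\sqrt n$. By K\"onig's theorem, a sufficient condition for such non-coverability is that $\mathcal E_K(n)$ contains a matching of size exceeding $2\varepsilon n$. Since $\E X_{11}^2 = \infty$ forces $\sum_n \P{|X_{11}|>K\sqrt n} = \infty$, I would choose $K = K_n \to \infty$ adaptively so that $n \P{|X_{11}|>K_n \sqrt n} \to \infty$; standard bipartite random-graph estimates then guarantee a matching of size $(1-o(1))n$, well in excess of $2\varepsilon n$, with high probability, and a Borel--Cantelli argument along dyadic blocks of $n$ promotes this to almost sure convergence along the full sequence.

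The main obstacle I anticipate is case (b). The hypothesis $\E X_{11}^2 = \infty$ only guarantees $\sum_n \P{|X_{11}|>K\sqrt n} = \infty$ for each fixed $K$, which need not imply any pointwise lower bound on $n\P{|X_{11}|>K\sqrt n}$ along the full sequence; the tail can be quite irregular. The crux is therefore an \emph{adaptive} choice of truncation $K_n$, both diverging to infinity and producing enough large entries to feed the matching argument uniformly in $n$. Packaging the resulting in-probability bounds on random bipartite matchings into an almost sure statement along the full sequence of $n$ is the technical heart of the proof; once that is in place, the K\"onig--matching argument closes everything.
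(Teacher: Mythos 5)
Your reduction to $(1-\e)n \times (1-\e)n$ submatrices and the split into (a) nonzero finite mean with finite variance and (b) infinite second moment match the paper exactly. Case (a) is also close in spirit to the paper's argument, but there is one real slip: you write ``Choosing $\e$ small then gives $\ldots$,'' whereas $\e$ is given and may be arbitrarily close to $1$ (the paper explicitly remarks that Theorem~\ref{global} only gets harder as $\e \to 1$). The fix is to concentrate the partial sum $\sum_{i\in I, j\in J} A_{ij}$ directly around its mean $\mu(1-\e)^2 n^2 > 0$, rather than subtracting a ``lost mass'' from the total-sum SLLN; this is exactly what Lemmas~\ref{lem: under boundedness}--\ref{lem: without boundedness} do via Bernstein's inequality (after truncation) plus a union bound over the $\binom{n}{m}^2$ submatrices.

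Case (b), however, has a genuine gap, and the obstacle you flag at the end is in fact fatal for the route you chose. Your plan places a single entry with $|A_{ij}| > K_n\sqrt n$, $K_n \to \infty$, inside every admissible $I\times J$ via a K\"onig/matching argument, which requires at least order $\e n$ such entries, hence $n^2 \P\{|X|>K_n\sqrt n\} \gtrsim \e n$. But $\E X^2 = \infty$ gives no such pointwise control: if $\P\{|X|>t\}\sim 1/(t^2\log t)$ then $\E X^2=\infty$, yet for any $K_n\ge 1$ the expected number of entries exceeding $K_n\sqrt n$ is $O(n/\log n)=o(\e n)$, so with high probability no matching of size $2\e n$ exists and the K\"onig argument never gets started. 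The paper's Lemma~\ref{lem: fixed sub-matrix} sidesteps the need for any single spectacular entry: truncate the entries at a \emph{fixed constant} level $K$ chosen by monotone convergence so that $\E (X\ind_{|X|\le K})^2 \ge 2M^2$, apply Hoeffding to the bounded squares $\bar B_{ij}^2$ on a fixed $m\times m$ submatrix to get $\|\bar B\|_F \ge Mm$ with probability $1-e^{-M^2 m}$, then use $\|B\| \ge \|\bar B\|_F/\sqrt m$ and a union bound over submatrices. This Frobenius-norm bound aggregates many moderately large entries and works even when the tail is only barely non-square-integrable, which is precisely the regime where a single-witness argument fails.
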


It should be noted that while Theorem~\ref{main} becomes harder for smaller $\e$, 
Theorem~\ref{global} becomes harder for larger $\e$, those near $1$. 

We prove Theorem~\ref{global} in Section \ref{s: global}. The argument is considerably simpler than 
for Theorem~\ref{main}. Indeed, the nonzero mean forces the sum of the entries 
of $\tilde{A}_n$ to be $\gtrsim n^2$, and the infinite second moment forces the Frobenius norm of $\tilde{A}_n$
(the square root of the sum of the entries squared) to be $\gg n^2$ with high probability. 
Either of these two bounds can be easily used to show that the operator norm of $\tilde{A}_n$ is $\gg \sqrt{n}$.

\subsection{What if we remove large entries?}			\label{s: 2plusepsilon}
%----------------------

One may naturally wonder what exactly may cause the norm of a mean zero random matrix $A$ 
to be too large. A natural guess is that the only troublemakers are a few large entries of $A$. 
Indeed, this is exactly how the necessity of the fourth moment for \eqref{eq: sqrtn bound} was shown in \cite{BSY, Silv}. So we may ask -- can we obtain a result like Theorem~\ref{main} simply by zeroing out 
a few largest entries of $A$?

The answer is no. A counterexample is a sparse Bernoulli matrix $A$, 
whose i.i.d. entries take values $\pm \sqrt{n}$ with probability $1/2n$ each and $0$ with probability $1-1/2n$.
It is not hard to check that $A$ is likely to have a row whose norm exceeds 
$c\sqrt{n} \log (n) / \log\log n \gg \sqrt{n}$, and consequently we have $\|A\| \gg \sqrt{n}$.
In other words, without removal of any entries the norm of $A$ is too large.
However, if we are to remove any entries based purely on their magnitudes, we must remove them all.
(Recall that all non-zero elements of $A$ have the same magnitude $\sqrt{n}$.)
But removal of all nonzero entries of $A$ is not a local intervention, since such entries can not be placed in 
a small submatrix (we explained this in Remark~\ref{rem: optimality}). 

Nevertheless, under slightly stronger moment assumptions than in Theorem~\ref{main}, 
zeroing out a few large entires does bring the norm of $A$ down. The following result can be quickly deduced
by truncation from known bounds on random matrices such as \cite{vH, Seginer, Auff}. 

\begin{proposition}[$2+\e$ moments]	\label{twoplus} 
  For any $\e \in (0, 1]$ there exists $n_0 = n_0(\e)$ such that the following holds for any $n > n_0(\e)$.
  Consider an $n \times n$ random matrix $A$ with i.i.d. mean zero entries which satisfy
  $\E |A_{ij}|^{2 + \e} \le 1$. 
  Then, with the probability at least $1 - 2\exp(-n^{\e/5})$, there exists a integer $K \le n^{1 - \e/9}$ 
  such that the matrix $\tilde{A}$ obtained by zeroing out $K$ largest entries of $A$ satisfies
  $$
  \|\tilde{A}\| \le 9\sqrt{n}.
  $$
\end{proposition}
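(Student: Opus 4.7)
The plan is a standard truncation-and-concentration argument. Fix a truncation level $t := n^{1/2 - \d}$ with $\d = \e/10$, and let $B$ be the matrix obtained from $A$ by zeroing out every entry with $|A_{ij}| > t$. Let $K$ denote the (random) number of such truncated entries; then the $K$ largest-magnitude entries of $A$ are precisely those exceeding $t$, so the matrix $\tilde A$ promised by the proposition coincides with $B$ (up to arbitrary tie-breaking on the boundary). The task reduces to: (a) $K \le n^{1 - \e/9}$, and (b) $\|B\| \le 9 \sqrt n$, each with failure probability $\le \exp(-n^{\e/5})$.

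For (a), Markov gives $\Pr{|A_{ij}| > t} \le t^{-(2+\e)}$, so
$$
\E K \le n^2 t^{-(2+\e)} = n^{1 - \e/2 + \d(2+\e)} \le n^{1 - \e/5},
$$
which is well below $n^{1-\e/9}$. A Chernoff bound for $K \sim \Binom(n^2, t^{-(2+\e)})$ then yields $K \le n^{1 - \e/9}$ except with probability $\exp(-c n^{1-\e/5}) \ll \exp(-n^{\e/5})$.

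For (b), split $B = (B - \E B) + \E B$. Since $\E A_{ij}=0$,
$$
|\E B_{ij}| = |\E A_{ij}\ind_{|A_{ij}|>t}| \le \E |A_{ij}|^{2+\e}/t^{1+\e} \le t^{-(1+\e)},
$$
so $\E B$ is rank one with $\|\E B\| \le n\, t^{-(1+\e)} = n^{1/2 - \e/2 + \d(1+\e)} = o(\sqrt n)$. The centered matrix $B - \E B$ has i.i.d.\ mean-zero entries bounded by $2t$, with variance at most $1$ (by Jensen applied to $\E |A_{ij}|^{2+\e} \le 1$). I then invoke a row/column-norm-sensitive bound of the Seginer / van~Handel type \cite{Seginer, vH, Auff},
$$
\|B - \E B\| \lesssim \max_i \|(B - \E B)_{i,\cdot}\|_2 + \max_j \|(B - \E B)_{\cdot, j}\|_2.
$$
Each squared row or column norm is a sum of $n$ i.i.d.\ nonnegative variables bounded by $4t^2 = 4n^{1-2\d}$ with mean $\le 1$; Bernstein gives deviation at most $n$ except with probability $\exp(-c n^{2\d}) = \exp(-c n^{\e/5})$. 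A union bound over $2n$ rows and columns preserves this tail and yields $\|B - \E B\| \le (2 + o(1))\sqrt n$; combined with the bias estimate, this gives $\|B\| \le 9\sqrt n$ for all $n \ge n_0(\e)$.

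The main obstacle is step (b). The truncated entries are bounded only by $t = n^{1/2-\d}$, which grows with $n$, so a naive matrix-Bernstein inequality would produce an irreducible additive term of order $t\sqrt{\log n}$ that overwhelms the $9\sqrt n$ target. The resolution is to use a row/column-sensitive operator-norm bound (Seginer, van Handel, Auffinger), which transfers the problem to concentration of individual row/column $\ell_2$-norms --- these are genuinely of order $\sqrt n$ once the entries have unit variance. The only remaining work is to balance $\d$ across the three tails (Markov, bias, row-norm Bernstein), and $\d = \e/10$ is a clean choice that lines the constants up so that the failure probability is exactly of the advertised form $\exp(-n^{\e/5})$.
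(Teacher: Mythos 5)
The overall strategy (truncate at $n^{1/2-\delta}$, control the count of truncated entries, center, and compare to $\sqrt n$) coincides with the paper's, and your bookkeeping for part (a) and for the bias $\|\E B\|$ is correct, in fact slightly sharper than the paper's Cauchy--Schwarz estimate. But the central step contains a real gap: the inequality
$$
\|B - \E B\| \ \lesssim\ \max_i \|(B - \E B)_{i,\cdot}\|_2 + \max_j \|(B - \E B)_{\cdot, j}\|_2
$$
is invoked as though it were a deterministic fact, and it is false. The $n\times n$ all-ones matrix has operator norm $n$ while every row and column has $\ell_2$-norm $\sqrt n$. Seginer's comparison is of this shape but holds only \emph{in expectation} and only for i.i.d.\ \emph{symmetric} entries; the centered truncated variables $B_{ij}-\E B_{ij}$ are not symmetric, and even if they were you would still need a separate concentration step to pass from the expectation bound to a high-probability bound. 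Van Handel's results do not give a two-term bound in terms of realized row/column norms either; they always carry an additional term of order $\sqrt{\log n}\cdot\sigma_*$, where $\sigma_* = \max_{ij}\|G_{ij}\|_\infty$. So controlling the realized row/column norms by Bernstein, as you do, does not by itself yield $\|B-\E B\|\lesssim\sqrt n$.

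The paper avoids this entirely by applying the Bandeira--van Handel tail bound directly:
$$
\P\{\|G\| \ge 4\sigma + t\} \le n\exp\!\left(-ct^2/\sigma_*^2\right),
\qquad \sigma^2 := \max_i \sum_j \E G_{ij}^2, \quad \sigma_* := \max_{ij}\|G_{ij}\|_\infty.
$$
Here $\sigma$ involves \emph{expected} row norms (so no concentration of realized row norms is needed), and with $\sigma\le\sqrt n$, $\sigma_*\le 2R$, $R=n^{1/2-\e/8}$, $t=\sqrt n$, the exponent is of order $n^{\e/4}$, comfortably dominating the advertised tail $\exp(-n^{\e/5})$. Your choice $\delta=\e/10$ would in fact be compatible with such a bound --- the term $\sqrt{\log n}\cdot\sigma_* = \sqrt{\log n}\cdot O(n^{1/2-\e/10})$ that a van Handel-type inequality actually requires is $o(\sqrt n)$ at your truncation level --- but the clean two-term inequality you wrote down does not exist. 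Replace it with the BvH tail bound and the rest of your argument goes through.
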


We will deduce Proposition~\ref{twoplus} from a general bound 
of A.~Bandeira and R.~van~Handel \cite{BvH} in Section~\ref{s: twopluseps}.

\subsection{Other related results}			\label{s: related}
%--------------

For Bernoulli random variables, a variant of Theorem~\ref{main} was proved by 
U.~Feige and E.~Ofek \cite{FO}; see \cite{LV} for an alternative argument and more general way 
to regularize such matrices. Suppose the entries of an $n \times n$ matrix $B$ are independent Bernoulli random variables with mean $p \in (0,1)$.
If one removes the heavy rows and columns -- those containing more than $2pn$ ones, then the resulting matrix $B'$ satisfies 
the optimal norm bound $\|B' - \E B'\| = O(\sqrt{pn})$. To see that this bound is consistent with that of Theorem~\ref{main}, 
divide both sides by $\sqrt{p}$ to normalize the variance of the entries. 
Moreover, one can quickly check using concentration 
that the number of heavy rows and columns in $B$ is typically small. With a little more work, one can even place all ones
from the heavy rows and columns into a small submatrix (see Lemma~\ref{lem: Bernoulli} below). 
Thus Feige-Ofek's result is an example of Theorem~\ref{main}, and in this example we actually
have an explicit recipe of regularization: removal of the heavy rows and columns.
Note, however, that the results of \cite{FO, LV} hold for symmetric matrices as well, 
while we do not how to immediately extend Theorem~\ref{main} for symmetric matrices (the requirement of the identical distribution of entries of $A$ prevents doing simple symmetrization tricks).

Weaker versions of Theorem~\ref{main}, with an additional factor $\log n$ in the norm bound and weaker probability guarantees, can be derived from known general bounds on random matrices, such as the matrix Bernstein's inequality \cite{Tropp}. 
(One would apply the matrix Bernstein's inequality for the entries truncated at level $\sqrt{n}$, and control the larger entries as in Section~\ref{s: large entries}.)  
A different weaker bound $\|\tilde{A}\| \leq (C/\e) \sqrt{n}$, which has a suboptimal dependence on $\e$, 
can be derived in a faster way by using results of \cite{RT} directly.

\subsection*{Acknowledgement}
%===========

 Si~Tang and Antonio Auffinger were first to note a version of Proposition~\ref{twoplus}, which they kindly showed to us along with a proof based on \cite{Auff}. Our correspondence led us to add Section~\ref{s: 2plusepsilon}. We are thankful to Ramon van Handel who showed us a simple argument that we use here to prove Lemma~\ref{lem: norm L1 rows cols}. We also would like to thank the referee for the valuable suggestions, which helped to improve the presentation of our paper.

\section{The method}
%================

Our approach to Theorem~\ref{main} utilizes and advances the methods developed recently in \cite{RT} and \cite{LV}. 
We will first control the cut norm of $A$ and then pass to the operator norm using Grothendieck-Pietsch 
factorization. Let us describe these steps in more detail.

\subsection{Three matrix norms}
%------------

The operator norm of a matrix $A$, as we already mentioned, is defined by considering $A$  
as a linear operator on the (finite dimensional) space $\ell_2$, i.e.
$$
\|A\| = \|A: \ell_2 \to \ell_2\|.
$$
Rather than bounding the operator norm of a random matrix $A$ directly, we shall compare it with two simpler norms,
\begin{gather*}
\|A\|_{\infty \to 2} = \|A: \ell_\infty \to \ell_2\| = \max_{x \ne 0} \frac{\|Ax\|_2}{\|x\|_\infty} 
\intertext{and}
\|A\|_{2 \to \infty} = \|A: \ell_2 \to \ell_\infty\| = \max_{x \ne 0} \frac{\|Ax\|_\infty}{\|x\|_2}.
\end{gather*}

The simplest of the three is the $2 \to \infty$ norm. A quick check reveals that it equals the maximum 
Euclidean norm of the rows $A_i^\tran$ of $A$:
\begin{equation}         \label{eq: 2 to infty}
\|A\|_{2 \to \infty} = \max_{i \in [n]} \|A_i\|_2.
\end{equation}
The next simplest norm is $\infty \to 2$, which can be conveniently computed as
\begin{equation}         \label{eq: infty to 2}
\|A\|_{\infty \to 2} = \max_{x \in \{-1,1\}^n} \|Ax\|_2.
\end{equation}
This norm is equivalent within a constant factor to the {\em cut norm} from the computer science literature \cite{Bol, AN},
where the maximum is taken over $\{0,1\}^n$.
The hardest of the three is the operator norm, 
\begin{equation}         \label{eq: 2 to 2}
\|A\| = \max_{x \in S^{n-1}} \|Ax\|_2.
\end{equation}
To see why the difficulty in bounding these norms rises this way, 
note that one has to control $n$ random variables in \eqref{eq: 2 to infty},
$2^n$ random variables in \eqref{eq: infty to 2}, and infinitely many random variables in \eqref{eq: 2 to 2}.

\subsection{Ideal relationships among the norms}		\label{s: ideal norms}
%------------------

How large do we expect the three norms to be for random matrices? For a simple example, 
let us first consider a Gaussian random matrix $A$ with i.i.d. $N(0,1)$ entries. 
Then it is not difficult to check that
\begin{equation}         \label{eq: Gaussian}
\|A\|_{2 \to \infty} \sim \sqrt{n}, \quad 
\|A\|_{\infty \to 2} \sim n, \quad
\|A\| \sim \sqrt{n}.
\end{equation}
Indeed, note that the rows of $A$ have Euclidean norms $\sqrt{n}$ on average, so the bound on the $2 \to \infty$ norm 
follows by union bound and using Gaussian concentration. The bound on the $\infty \to 2$ norm
follows from \eqref{eq: infty to 2} by using Gaussian concentration for the normal random vector $Ax$ 
and taking the union bound over $\{-1,1\}^n$. The bound on the operator norm is a non-asymptotic version of Bai-Yin's law,
see e.g. \cite[Theorem~5.32]{V}.
 
One might wonder if \eqref{eq: Gaussian} holds not only in the Gaussian case but generally 
for random matrices $A$ with i.i.d. entries that have zero mean and unit variance.  
In particular, it would be wonderful if the three norms were always related to each other as follows:
\begin{equation}         \label{eq: three norms}
\|A\| \lesssim \frac{\|A\|_{\infty \to 2}}{\sqrt{n}} \lesssim \|A\|_{2 \to \infty} \lesssim \sqrt{n}.
\end{equation}

This, however, would be too optimistic to expect, since the bound $\|A\| \lesssim \sqrt{n}$
can not hold without higher moments assumptions as we mentioned in Section~\ref{s: local global}.
Nevertheless, we will obtain a version of \eqref{eq: three norms}
after removal a small fraction of rows of $A$. With high probability, we will be able 
to find subsets of rows $J_1 \subset J_2 \subset J_3$ with cardinalities $|J_i| \le \e n$
and such that
\begin{equation}         \label{eq: three norms relationship}
\|A_{J_3^c}\| \lesssim \frac{\|A_{J_2^c}\|_{\infty \to 2}}{\sqrt{n}} \lesssim \|A_{J_1^c}\|_{2 \to \infty} \lesssim \sqrt{n}.
\end{equation}
where the inequalities hide a factor that depends on $\e$.

\subsection{A roadmap of the proof}	
%-----------

The first step in proving \eqref{eq: three norms relationship} is to find a small set $J_1$ with $|J_1| \lesssim \e n$ and
such that 
\begin{equation}         \label{eq: 2 to infty announced}
\|A_{J_1^c}\|_{2 \to \infty} \lesssim \sqrt{n}
\end{equation}
with high probability. In other words, we would like to bound all rows of $A$ simultaneously by $O(\sqrt{n})$
after removing a few columns of $A$. To show this we first focus on one row, where
we need to bound a sum of independent random variables (the squares of the row's entries).
In Theorem~\ref{thm: damping sum} we show how to bound sums of independent random variables 
almost surely by gently {\em damping} the summands. 
Damping, or reweighting down, is a softer operation than removing entries. It allows us 
to treat in Section~\ref{s: 2 to infty} all columns simultaneously without much effort, 
thus proving \eqref{eq: 2 to infty announced}. 
The argument in this step is similar to the approach proposed recently in \cite{RT}. 
We somewhat simplify the method of \cite{RT} and also improve the dependence 
between the number of removed columns and the resulting $2 \to \infty$ norm; this will ultimately
lead to the optimal dependence on $\e$ in Theorem~\ref{main}.
 
At the next step, we extend $J_1$ to a bigger set of rows $J_2$ with $|J_2| \lesssim \e n$ and so that 
\begin{equation}         \label{eq: infty to 2 desired}
\|A_{J_2^c}\|_{\infty \to 2} \lesssim n.
\end{equation}
Suppose for a moment that we are not concerned about removal of any columns. 
It is not too hard to show the general bound 
\begin{equation}         \label{eq: two norms compared announced}
\E \|A\|_{\infty \to 2} \lesssim \sqrt{n} \E \|A\|_{2 \to \infty},
\end{equation}
for a random matrix $A$ with independent, mean zero entries; 
we prove this in Lemma~\ref{lem: infty to 2 by symmetrization}.
However, this bound is not very helpful in our situation. 
We need to work with the matrix $A_{J_1^c}$ instead of $A$, which is not trivial:  
the removal of the columns in $J_1$ that we did in the first step made the entries of $A_{J_1^c}$ dependent. 
In Lemma~\ref{lem: symmetric distributions}, we first prove a variant of \eqref{eq: two norms compared announced} 
for $A_{J_1^c}$ under an additional symmetry assumption on the distribution 
of the entries of $A$. Then we manage to remove this assumption with a delicate symmetrization argument, 
which we develop in the rest of Section~\ref{s: two norms}, with the final result being Theorem~\ref{thm: infty to 2}.
The general idea of this step, as well as some of our arguments here, are inspired by \cite{RT}.
However we need to be considerably more careful than in \cite{RT} to obtain \eqref{eq: infty to 2 desired} 
with a {\em logarithmic} dependence on $\e$.

Next, we pass from $\infty \to 2$ norm to the operator norm in Section~\ref{s: bounded}. 
This is done by using Grothendieck-Pietsch factorization (Theorem~\ref{G-P}), a result that yields
the first inequality in \eqref{eq: three norms relationship} for completely arbitrary, even non-random, matrices. 
This reasoning was recently used in a similar context in \cite{LV}.

The argument we just described works under the additional assumption that the 
entries of $A$ be $O(\sqrt{n})$ almost surely. To be specific, such boundedness assumption 
is needed to make the damping argument in Step~1 work with mild, logarithmic dependence on $\e$.
The contribution of the entries that are larger than $\sqrt{n}$ are controlled in Section~\ref{s: large entries}
by showing that there can not be too many of them. 
The unit variance assumption implies that there are $O(1)$ such large entries
per column on average. This does not mean, of course, that all columns will have $O(1)$ large entries 
with high probability; in fact there could be columns with $\sim \log n / \log \log n$ large entries.
But we will check in Lemma~\ref{lem: Bernoulli} that the number of such heavy columns is small; 
removing them will lead to the desired bound $O(\sqrt{n})$ on the operator norm for the matrix with large entries. 
We develop this argument in Proposition~\ref{prop: moderate entries} and Corollary~\ref{cor: very large entries}, 
and derive the full strength of Theorem~\ref{main} in Section~\ref{s: proof main}.

Theorem~\ref{global} is proved in Section~\ref{s: global}. The paper is concluded with Section~\ref{s: questions}
where we discuss some further problems.

\subsection*{Acknowledgements}
%----------------
We are thankful to Ramon van Handel who showed us a simple argument that
we use here to prove Lemma~\ref{lem: norm L1 rows cols} and to Antonio Auffinger 
for the interesting discussion of $2+\e$ finite moment case.

\section{Preliminaries}
%=================

\subsection{Notation}
%------------

Throughout the paper, positive absolute constant are denoted $C, C_1, c, c_1$, etc. Their values 
may be different from line to line. We often write $a \lesssim b$ to indicate that $a \le C b$ for 
some absolute constant $C$.

The discrete interval $\{1,2,\ldots,n\}$ is denoted by $[n]$. If $\RR$ is some subset of indices, $\RR \subset [n] \times [n]$, let us denote by $A_\RR$ the matrix obtained from $A$
by replacing the indices in $\RR$ by zero:
 $$
    A_{\RR} := (\bar{A}_{ij})_{i,j = 1}^n, \mbox{ where } \bar{A}_{ij} = A_{ij} \ind_{\{(i,j) \in \RR\}}.
$$ 
We will often consider subsets of columns of the matrix, so when $\RR = J \times [n]$ we use a simplified notation: for $J \subset [n]$
$$ A_{J} := A_{[n] \times J}.$$

Given a finite set $S$, by $|S|$ we denote its cardinality. The standard inner product in $\R^n$
shall be denoted by $\langle\cdot,\cdot\rangle$. Given $p\in[1,\infty]$, $\|\cdot\|_p$ is the standard $\ell_p^n$-norm in $\R^n$. Also, $\|\cdot\|_{\psi_2}$ denotes sub-gaussian norm of a random variable and $\|\cdot\|_{\psi_1}$ --  sub-exponential norm (see also in Section~\ref{s:conc}).

\subsection{Operator norm via $\ell_1$ norm of rows and columns}
%--------------------

The following simple result, known as Schur bound \cite[p. 6, \S 2]{Schur},
states that the operator norm of any matrix is dominated by the $\ell_1$ norms of rows and columns. 
For completeness, we state and prove Schur bound here; the proof is almost identical to the original one. 

\begin{lemma}				\label{lem: norm L1 rows cols}
  For any $m \times k$ matrix $A$, we have
  $$
  \|A\| \le \big( \max_i \|A_i\|_1 \cdot \max_{j} \|A^j\|_1 \big)^{1/2}
  $$
  where $A_i$ and $A^j$ denote the rows and columns of $A$.
\end{lemma}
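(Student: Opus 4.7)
The plan is to give the classical Cauchy--Schwarz proof of Schur's bound. The key identity is
$\|Ax\|_2^2 = \sum_i |\langle A_i, x\rangle|^2$, so everything reduces to controlling each inner product $\langle A_i, x\rangle$ in terms of the row sums and a weighted quadratic form in $x$.

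My first step would be to bound each inner product by splitting the product $A_{ij} x_j$ as $\sqrt{|A_{ij}|}\cdot \mathrm{sgn}(A_{ij})\sqrt{|A_{ij}|}\,x_j$ and applying Cauchy--Schwarz:
$$
|\langle A_i, x\rangle|^2 \;\le\; \Big(\sum_j |A_{ij}|\Big)\Big(\sum_j |A_{ij}|\,x_j^2\Big)
\;\le\; \max_i \|A_i\|_1 \cdot \sum_j |A_{ij}|\,x_j^2.
$$
Summing over $i$ and swapping the order of summation produces $\sum_j x_j^2 \|A^j\|_1$, which is bounded above by $\max_j \|A^j\|_1 \cdot \|x\|_2^2$. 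Combining these two bounds,
$$
\|Ax\|_2^2 \;\le\; \max_i \|A_i\|_1 \cdot \max_j \|A^j\|_1 \cdot \|x\|_2^2,
$$
and taking square roots and the supremum over unit $x$ yields the desired inequality.

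There is no real obstacle here; the only thing to be careful about is the way the Cauchy--Schwarz factorization distributes $|A_{ij}|$ between the two factors, so that after summing in $i$ one recognizes the column $\ell_1$ norms $\|A^j\|_1 = \sum_i |A_{ij}|$. Everything else is a one-line computation.
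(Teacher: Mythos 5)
Your proof is correct and uses essentially the same idea as the paper: splitting $|A_{ij}|$ evenly between the two Cauchy--Schwarz factors so that one side yields row $\ell_1$ norms and the other column $\ell_1$ norms. The only cosmetic difference is that the paper applies Cauchy--Schwarz once to the bilinear form $x^\tran A y$ with two test vectors, while you apply it rowwise to $\|Ax\|_2^2$ with a single test vector; the underlying computation is the same.
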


\begin{proof}
Recall that the operator norm can be computed as a maximum of the quadratic form:
$$
\|A\| = \sup_{\|x\|_2 = \|y\|_2 = 1} |x^\tran A y|.
$$
Fix unit vectors $x$ and $y$ and express
\begin{align*}
|x^\tran A y|
&= \Big| \sum_{i,j} x_i A_{ij} y_j \Big| \\
&\le \sum_{i,j} \left( |x_i| \sqrt{|A_{ij}|} \right) \left( \sqrt{|A_{ij}|} |y_j| \right) \quad \text{(by triangle inequality)} \\
&\le \Big( \sum_{i,j} x_i^2 |A_{ij}| \Big)^{1/2} \Big( \sum_{i,j} |A_{ij}| y_j^2 \Big)^{1/2} \quad \text{(by Cauchy-Schwarz)} \\
&= \Big( \sum_i x_i^2 \, \|A_i\|_1 \Big)^{1/2} \Big( \sum_j \|A_j\|_1 \, y_j^2 \Big)^{1/2} \\
&\le \max_i \|A_i\|_1^{1/2} \cdot \max_{j} \|A^j\|_1^{1/2} \quad \text{(since $\|x\|_2 = \|y\|_2 = 1$).}
\end{align*}
Taking the maximum over all unit vectors $x$ and $y$, we complete the proof.
\end{proof}

\subsection{Concentration}\label{s:conc}
%------------------

A standard way to get some desired estimate on a random variable $X$ \emph{with high probability} is to get this estimate for $\E X$ first, and then argue that $X$ \emph{concentrates} around its expectation. In this case $X$ usually stays close to $\E X$, and therefore satisfies a close estimate. 

In this paper we make use of good concentration properties of the sums of sub-gaussian (and sub-exponential) random variables, that is, such that grow not faster than standard normal (respectively, exponential) random variables. Recall that by definition a random variable $Y$ is called \emph{sub-gaussian} if its moments satisfy
$$
	\E \exp(Y^2/M_2^2) \le e,
$$
for some number $M_2 >0$. The minimal number $M_2$ is called the sub-gaussian moment of $X$, denoted as $\|Y\|_{\psi_2}$. Analogously, a random variable is called \emph{sub-exponential} if 
$$
	\E\exp(Y/M_1) \le e,
$$
for some number $M_1 >0$. The minimal number $M_1$ is called the sub-exponential moment of $Y$, denoted as $\|Y\|_{\psi_1}$. 

The class of sub-gaussian random variables contains standard normal, Bernoulli, and generally all bounded random variables. The class of sub-exponential random variables is exactly the class of squares of sub-gaussians. See \cite{V} for more information and statements of standard concentration inequalities.

Also we will need a concentration inequality for random permutations from \cite{RT}.

\begin{lemma}[Concentration for random permutations]		\label{lem: concentration permutations}
  Consider arbitrary vectors $a = (a_1,\ldots,a_n) \in \R^n$ and $x \in \{-1,1\}^n$. 
  Let $\pi : [n] \to [n]$ denote a random permutation 
  chosen uniformly from the symmetric group $S_n$. Then the random sum 
  $$
  S := \sum_{i=1}^n a_i x_{\pi(i)}
  $$
  is sub-gaussian, and 
  $$
  \|S - \E S\|_{\psi_2} \le C \|a\|_2.
  $$
  The same inequality holds for the sum $S' = \sum_{i=1}^n a_{\pi(i)} x_i$ as well, since it has the same 
  distribution as $S$.
\end{lemma}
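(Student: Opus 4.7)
My plan is to reduce $S$ to a linear combination of bounded, mean-zero, negatively associated random variables, and then bound its moment generating function using Hoeffding's lemma coordinate by coordinate.

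First, I would rewrite $S$ in terms of indicators. Setting $T := \{j \in [n] : x_j = 1\}$ and $m := |T|$, we have $x_j = 2\ind_{\{j \in T\}} - 1$ for every $j$, and hence
$$
x_{\pi(i)} = 2\ind_{\{i \in P\}} - 1, \qquad P := \pi^{-1}(T),
$$
where $P$ is a uniformly random subset of $[n]$ of cardinality $m$. Consequently $S - \E S = 2 \sum_i a_i Y_i$ with $Y_i := \ind_{\{i \in P\}} - m/n$. The family of indicators $(\ind_{\{i \in P\}})_{i=1}^n$ of a uniformly random subset of fixed size is known to be negatively associated (Joag-Dev and Proschan, 1983); since negative association is preserved under coordinatewise affine transformations, the centered variables $(Y_i)_{i=1}^n$ are NA as well, each of mean zero and taking values in an interval of length one.

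Next I would estimate $\E \exp(\lambda \sum_i a_i Y_i)$. The obstacle is that when the $a_i$ have mixed signs, the functions $y \mapsto e^{\lambda a_i y}$ are not monotone in a common direction, so the standard NA product inequality is not directly applicable. The clean remedy is a Cauchy-Schwarz splitting: decompose $[n] = I^+ \cup I^-$ according to the sign of $a_i$ and write
$$
\E \prod_{i=1}^n e^{\lambda a_i Y_i} \le \sqrt{\E \prod_{i \in I^+} e^{2\lambda a_i Y_i}} \cdot \sqrt{\E \prod_{i \in I^-} e^{2\lambda a_i Y_i}}.
$$
On each half all the functions $y \mapsto e^{2\lambda a_i y}$ now share the same monotonicity (for each fixed sign of $\lambda$), so the NA property gives $\E \prod_{i \in I^{\pm}} e^{2\lambda a_i Y_i} \le \prod_{i \in I^{\pm}} \E e^{2\lambda a_i Y_i}$. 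Each single-coordinate MGF is bounded by Hoeffding's lemma: since $Y_i$ is mean zero with range at most $1$, $\E e^{2\lambda a_i Y_i} \le \exp(\lambda^2 a_i^2 / 2)$. Multiplying these bounds yields $\E \exp(\lambda(S - \E S)) \le \exp(C \lambda^2 \|a\|_2^2)$ for an absolute constant $C$, which translates to the desired sub-gaussian estimate $\|S - \E S\|_{\psi_2} \le C' \|a\|_2$.

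The principal technical obstacle is the mixed-sign step; the Cauchy-Schwarz splitting is the cleanest workaround I know and costs only an absolute factor in the exponent. An alternative route is Hoeffding's 1963 convex-ordering comparison between sampling without and with replacement, which transfers a sub-gaussian estimate for an i.i.d.\ analog $\tilde S = \sum_i a_i \xi_i$ (with $\xi_i$ i.i.d.\ uniform on the multiset $\{x_1,\dots,x_n\}$) back to $S$ itself; both approaches yield the same sub-gaussian bound up to an absolute constant.
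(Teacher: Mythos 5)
The paper states this lemma as a citation from \cite{RT} and does not reproduce a proof, so there is no in-paper argument to compare against; I assess your write-up on its own terms. Your argument is correct. The reduction of $S-\E S$ to $2\sum_i a_i Y_i$ with $Y_i = \ind_{\{i\in P\}}-m/n$ and $P$ a uniformly random $m$-subset is exact; the negative association of the indicators of a simple random sample without replacement is the standard Joag-Dev--Proschan fact, and NA is indeed preserved under the coordinatewise centering shift. You correctly identify the genuine obstacle (mixed signs among the $a_i$), and the Cauchy--Schwarz split into $I^{+}$ and $I^{-}$ is a valid fix: for a fixed sign of $\lambda$ the factors $e^{2\lambda a_i y}$ on each half are simultaneously monotone (all non-decreasing or all non-increasing) and strictly positive, so the NA product inequality applies, after which Hoeffding's lemma for each mean-zero, range-one $Y_i$ gives $\E e^{2\lambda a_i Y_i}\le e^{\lambda^2 a_i^2/2}$ and hence $\E e^{\lambda(S-\E S)}\le e^{\lambda^2\|a\|_2^2}$. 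One small point worth making explicit: the inductive proof of the NA product bound needs the factors to be non-negative as well as monotone, so that each partial product is itself a coordinatewise monotone function --- this holds here but should be stated. Your alternative route via Hoeffding's 1963 convex-ordering comparison between sampling without and with replacement is also correct and, if anything, cleaner since it sidesteps the sign-splitting entirely. Both are legitimate, self-contained proofs of the cited lemma.
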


\subsection{Discretization}
%---------

The following lemma allows us to approximate a general continuous random variable by a sum of independent, scaled Bernoulli 
random variables. This lemma was originally proved in \cite{RT}. Here we give a proof for completeness, and then discuss some 
particular cases needed for the proof of Theorem~\ref{main}.

\begin{lemma}[Discretization]				\label{lem: discretization}
  Consider a non-negative, continuous random variable $X$. 
  There exists a non-negative random variable $X'$ satisfying the following. 
  \begin{enumerate}[1.]
    \item $\E X' \le 4 \E X$.
    \item $X'$ stochastically dominates $X$, i.e. 
    $$
    \Pr{X' \ge t} \ge \Pr{X \ge t} \quad \text{for all } t \ge 0.
    $$
    
    \item $X'$ is a sum of scaled, independent Bernoulli random variables:
    \begin{equation}		\label{eq: X'}
    X' = \sum_{k=0}^\infty q_k \xi_k
    \end{equation}
    where $q_k$ are non-negative numbers 
    and $\xi_k$ are independent $\Ber(2^{-k})$ random variables. 
  \end{enumerate}
  
\end{lemma}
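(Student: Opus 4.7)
The plan is to use a dyadic quantile construction. Since $X$ is non-negative and continuous, the tail function $p(t):=\P(X\ge t)$ is continuous and decreasing from $1$ to $0$, so for every $k\ge 0$ there exists $t_k\ge 0$ with $p(t_k)=2^{-k}$ (with $t_0=0$, and $t_k\uparrow$). These dyadic quantiles partition the positive half-line into slabs $(t_k,t_{k+1}]$ on which the tail $p$ lies in the window $[2^{-(k+1)},2^{-k}]$, and they are the right scale on which to match the Bernoulli moments.

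Next I would set $q_k := t_{k+1}$ and take $\xi_k$ to be independent $\Ber(2^{-k})$ random variables, defining $X':=\sum_{k\ge 0} q_k\xi_k$. By construction this has form (\ref{eq: X'}). Stochastic domination is then almost a tautology: since $X'\ge \max_k q_k\xi_k$, for any $s\in(t_k,t_{k+1}]$ we have $q_k=t_{k+1}\ge s$, whence
\[
\P\{X'\ge s\}\ge \P\{\xi_k=1\}=2^{-k}\ge p(s).
\]
For $s=0$ the inequality is trivial, and this handles every value of $s\ge 0$. So properties (2) and (3) fall out immediately.

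The only part requiring real work is the expectation bound (1), and the main obstacle is tracking the constant carefully. Here I would expand $\E X'=\sum_{k\ge 0} 2^{-k}t_{k+1}$ and compare it to the layer-cake representation
\[
\E X=\int_0^\infty p(t)\,dt\ge \sum_{k\ge 0}(t_{k+1}-t_k)\,p(t_{k+1})=\sum_{k\ge 0}(t_{k+1}-t_k)\,2^{-(k+1)}.
\]
Splitting the right-hand side and reindexing, the $t_{k+1}$ terms contribute $\tfrac12\sum_{j\ge 1}2^{-j}t_j=\tfrac14\E X'$, while the $t_k$ terms give back $\tfrac14 \E X'$ at most (shifted by one index, losing a factor $2$), leading to $\E X\ge\tfrac14\E X'$, i.e. $\E X'\le 4\E X$. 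Verifying this telescoping computation cleanly — and making sure the handling of $t_0=0$ does not cost an extra factor — is the only delicate step; otherwise the construction is essentially forced by the two constraints that $\xi_k$ has mean $2^{-k}$ and that $X'$ must dominate $X$ tail-by-tail on the dyadic scale.
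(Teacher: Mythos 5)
Your construction is the same as the paper's: your $q_k = t_{k+1}$ is exactly the quantile with $\P\{X\ge q_k\}=2^{-k-1}$ used there, the resulting $X'$ is identical, and your stochastic-dominance argument is the same up to choosing half-open intervals on the other side. The only cosmetic difference is that you lower-bound $\E X$ via the layer-cake integral $\int_0^\infty p(t)\,dt$ rather than decomposing $X$ over the events $\{X\in[q_k,q_{k+1})\}$; both give the Riemann-sum bound $\E X \ge \sum_k (t_{k+1}-t_k)2^{-(k+1)}$. One bookkeeping slip worth fixing: the $t_{k+1}$-terms contribute $\sum_{j\ge 1}2^{-j}t_j = \tfrac12 \E X'$ (not $\tfrac14\E X'$ as you wrote), and subtracting the $t_k$-terms, which are $\tfrac14\E X'$ using $t_0=0$, gives $\E X \ge \tfrac12\E X'-\tfrac14\E X' = \tfrac14\E X'$ — the final constant is correct, but as written your two intermediate quantities would cancel to $0$.
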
 

\begin{proof}
Set the values $q_k$ to be the quantiles of the distribution of $X$:
$$
q_k : = \min \left\{ t \ge 0: \P\{X \ge t\} = 2^{-k-1} \right\}, \quad k = 0, 1, 2, \ldots 
$$
(These values are well defined since the cumulative distribution function of $X$ is continuous by assumption.)
By definition, $(q_k)$ is an increasing sequence.
Define $X'$ by \eqref{eq: X'}.

To check part 1, note that by definition,
\begin{equation}         \label{eq: EX'}
\E X' = \sum_{k=0}^\infty q_k \E \xi_k = \sum_{k=0}^\infty q_k 2^{-k}.
\end{equation}
To lower bound $\E X$, let us decompose $X$ according to the values it can take. This gives
$$
X \ge \sum_{k=0}^\infty X \ind_{\{X \in [q_k, q_{k+1})\}} \ge \sum_{k=0}^\infty q_k \ind_{\{X \in [q_k, q_{k+1})\}}
$$
almost surely. Taking expectation of both sides, we obtain
$$
\E X \ge \sum_{k=0}^\infty q_k \Pr{X \in [q_k, q_{k+1})}.
$$
Now, using the definition of $q_k$, we have
$$
\Pr{X \in [q_k, q_{k+1})} = \Pr{X \ge q_k} - \Pr{X \ge q_{k+1}} = 2^{-k-1} - 2^{-k-2} = 2^{-k-2}.
$$
This yields
\begin{equation}         \label{eq: EX}
\E X \ge \sum_{k=0}^\infty q_k 2^{-k-2}.
\end{equation}
Comparing \eqref{eq: EX'} with \eqref{eq: EX}, we conclude that $\E X' \le 4 \E X$, 
which proves part 1 of the lemma.

\medskip

Let us prove part 2. If $t \in [q_k, q_k+1)$ for some $k=0,1,2,\ldots$, then 
using the definitions of $X'$ and $q_k$ we obtain
\begin{align*}
\Pr{X' \ge t} 
  &\ge \Pr{X' \ge q_{k+1}}
  \ge \Pr{ \xi_{k+1} = 1 }
  = 2^{-k-1} \\
  &= \Pr{X \ge q_k}
  \ge \Pr{X \ge t},
\end{align*}
as required.

It remains to check the domination inequality when $t$ is outside the range $[q_0, q_\infty)$ where 
$q_\infty := \lim_{k \to \infty} q_k \in \R_+ \cup \{\infty\}$. 
If $t < q_0$, we have
$$
\Pr{X' \ge t} \ge \Pr{X' \ge q_0} \ge \Pr{\xi_0 = 1} = 1,
$$
and the inequality in part~2 follows. If $t \ge q_\infty$ then, using the continuity of 
the cumulative distribution of $X$, we obtain 
$$
\Pr{X \ge t} \le \Pr{X \ge q_\infty} 
= \lim_{k \to \infty} \Pr{X \ge q_k}
= \lim_{k \to \infty} 2^{-k-1} = 0,
$$
and the inequality in part~2 follows again.
The proof is complete.
\end{proof}

\begin{remark}[Bounded random variables]  \label{rem: discretization bdd}
  Suppose $X \le M$ almost surely. Then, in the second part of the conclusion of 
  Lemma~\ref{lem: discretization},
  $X$ can be represented as a {\em finite} sum 
  $$
  X' := \sum_{k=0}^\kappa q_k \xi_k
  $$
  where $q_k$ are non-negative numbers, $q_k \in[0, M]$,
  and $\xi_k$ are independent $\Ber(p_k)$ random variables.
  Here $p_k = 2^{-k} \ge 1/M$ for $k<\kappa$ and $p_\kappa = 1/M$.
\end{remark}

\begin{remark}[Coupling]		\label{rem: discretization coupling}
  Stochastic dominance of $X'$ over $X$ in   Lemma~\ref{lem: discretization} 
  implies that one can realize the random variables $X$ and $X'$ on 
  the same probability space so that 
  $$
  X' \ge X \quad \text{almost surely}.
  $$
   (See, for example, \cite[Section~4.3]{W}). 
    
  Moreover, in the same way we can construct a majorizing collection for any collection of independent random variables. 
  In particular, we can do it for all entries of the matrix $A$ at once.

\end{remark}

\section{Damping a sum of independent random variables}
%=================

Let $X_1,\ldots,X_n$ be non-negative i.i.d. random variables with $\E X_i \le 1$. 
The linearity of expectation gives the trivial bound
$$
\E \sum_{i=1}^n X_i \le n.
$$
Here we will be interested in a stronger result -- that
the sum be $O(n)$ {\em almost surely} instead of in expectation. 
To do this, we will be looking for random weights 
$$
W_1,\ldots, W_n \in [0,1]
$$
that make the ``damped'' sum satisfy
$$
\sum_{j=1}^n W_j X_j  = O(n) \quad \text{almost surely}.
$$
To make the damping as gentle as possible, we are looking for 
largest possible weights $W_i$, hopefully very close to $1$.

\subsection{Damping one random variable}
%-------------

To get started, let us consider the simple case where $n=1$
and try to damp one random variable. 

\begin{lemma}[Damping a random variable]		\label{lem: damping rv}
 Let $X$ be a random variable such that 
 $$
 X \ge 0 \quad \text{and} \quad \E X \le 1. 
 $$
 Let $\e \in (0,1)$. There exists a random variable $W$ taking values in $[0,1]$ and such that 
 \begin{gather}
 XW \le \e^{-1} \quad \text{almost surely};  		\label{eq: XW bounded a.s.}\\
 1 \le \E W^{-1} \le 1 + \e.							\label{eq: gentle weight}
 \end{gather}
\end{lemma}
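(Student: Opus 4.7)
The approach is direct and constructive: define the random weight $W$ by truncating at threshold $\e^{-1}$,
\[
W := \begin{cases} 1 & \text{if } X \le \e^{-1}, \\ \e^{-1}/X & \text{if } X > \e^{-1}. \end{cases}
\]
This is the largest value in $[0,1]$ that forces $XW \le \e^{-1}$ pointwise (on $\{X \le \e^{-1}\}$ we have $XW = X \le \e^{-1}$, and on $\{X > \e^{-1}\}$ we have $XW = \e^{-1}$), so \eqref{eq: XW bounded a.s.} is immediate by construction, and measurability of $W$ is clear.

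For \eqref{eq: gentle weight}, the lower bound $\E W^{-1} \ge 1$ is trivial since $W \le 1$. For the upper bound, note that $W^{-1} = 1$ on $\{X \le \e^{-1}\}$ and $W^{-1} = \e X$ on $\{X > \e^{-1}\}$, so
\[
\E W^{-1} \;=\; \Pr{X \le \e^{-1}} + \e\, \E\bigl[X\, \ind_{\{X > \e^{-1}\}}\bigr] \;\le\; 1 + \e\, \E X \;\le\; 1 + \e.
\]

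There is essentially no obstacle in this one-variable case; the only insight needed is that one should damp only the tail part $\{X > \e^{-1}\}$, and by exactly the reciprocal factor that pins $XW$ to the threshold. This makes the weight-penalty $W^{-1} - 1$ linear in $X$ on the tail, so the hypothesis $\E X \le 1$ converts it cleanly into the $\e$-loss in \eqref{eq: gentle weight}. The genuine difficulty, which presumably motivates this warm-up lemma, will appear when damping a sum $\sum_j W_j X_j$ of $n$ independent random variables: the individual weights must be chosen jointly (or at least compatibly) so as to bound both the damped sum almost surely and the aggregate penalty $\sum_j \E W_j^{-1}$ simultaneously.
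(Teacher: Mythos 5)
Your proof is correct and is essentially the same as the paper's: the paper sets $W := \min(1, L/X)$ with $L = \e^{-1}$, which coincides with your piecewise definition, and then bounds $\E W^{-1} = \E\max(1, X/L) \le 1 + \E X/L \le 1 + \e$. Your split into the two events gives a marginally sharper intermediate bound, but the idea and conclusion are identical.
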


\begin{proof}
Fix a level $L \ge 1$ whose value we will choose later, and define
$$
W := \min(1, L/X).
$$
To check \eqref{eq: XW bounded a.s.}, we have
$$
XW = \min(X,L) \le L \quad \text{almost surely}.
$$
Next, the lower bound in \eqref{eq: gentle weight} holds trivially since $W \le 1$. 
For the upper bound, we have
$$
\E W^{-1} = \E \max(1, X/L) \le \E (1 + X/L) \le 1 + \frac{1}{L},
$$
where we used the assumption that $\E X \le 1$.
Setting $L = \e^{-1}$ completes the proof. 
\end{proof}

\subsection{Damping a sum of random variables}
%-------------

Now let us address the damping problem for general number $n$ of random variables, which we described 
in the beginning of this section. Applying Lemma~\ref{lem: damping rv} for 
each random variable $X_i$, we get weights $W_i$ such that 
\begin{gather*}
\sum_{j=1}^n W_j X_j \le \e^{-1} n \quad \text{almost surely};  \\
1 \le \E \Big( \prod_{j=1}^n W_j \Big)^{-1} \le (1 + \e)^n = 1 + O(\e n)	
\end{gather*}
for small $\e$.
We will now considerably improve both these bounds, making only one mild
extra assumption that $X_i = O(n)$ almost surely. 

\begin{theorem}[Damping a sum of random variables]		\label{thm: damping sum}
 Let $X_1,\ldots,X_n$ be i.i.d. random variables such that 
 $$
 0 \le X_j \le Kn \quad \text{and} \quad \E X_j \le 1
 $$
 for some $K \ge 1$. Let $\e \in (0,1/2)$. 
 There exist random variables $W_1,\ldots,W_n$ taking values in $[0,1]$ and such that 
 \begin{gather}
 \sum_{j=1}^n W_j X_j \le C K \log (\e^{-1}) \cdot n \quad \text{almost surely};  
 	\label{eq: damped sum bdd}	\\
 1 \le \E \Big( \prod_{j=1}^n W_j \Big)^{-1} \le 1 + \e.
 	\label{eq: weights}
 \end{gather}
\end{theorem}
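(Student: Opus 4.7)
The first thing to note: the naive per-index application of Lemma~\ref{lem: damping rv}, taking $W_j = \min(1,\e_j^{-1}/X_j)$ with independent parameters $\e_j$, yields $\sum_j W_j X_j \le \sum_j \e_j^{-1}$ almost surely and $\E\prod_j W_j^{-1} = \prod_j(1+\e_j)$. The product constraint forces $\sum_j \e_j \lesssim \e$, and Cauchy--Schwarz then gives $\sum_j \e_j^{-1} \ge n^2/\sum_j \e_j \gtrsim n^2/\e$, a full factor of $n$ worse than the theorem's bound. The weights must therefore be \emph{correlated}, and the boundedness $X_j \le Kn$ must be exploited through that correlation.

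The construction I would try is a data-adaptive truncation. Set $T := CK\log(\e^{-1})\,n$ with $C$ a large absolute constant, and define
\[
L = L(X_1,\dots,X_n) := \sup\!\Big\{\ell \ge 0 : \sum_{j=1}^n \min(X_j,\ell) \le T\Big\}
\]
(with $L = +\infty$ if $\sum_j X_j \le T$), and set $W_j := \min(1, L/X_j) \in [0,1]$. Then $\sum_j W_j X_j = \sum_j \min(X_j,L) \le T$ almost surely, which is \eqref{eq: damped sum bdd}. Crucially, on the event $\{\sum_j X_j > T\}$ one has $T = \sum_j \min(X_j,L) \le nL$ and hence $L \ge T/n = CK\log(\e^{-1})$ pointwise; this deterministic lower bound on $L$ is what replaces the $1/\e$ of independent damping by $\log\e^{-1}$.

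For the expectation bound $\E\prod_j W_j^{-1} \le 1+\e$, note that $L$ is nonincreasing and $\max(1, X_j/L)$ therefore nondecreasing in each $X_l$, so $\prod_j W_j^{-1}$ is coordinatewise nondecreasing in $(X_1,\dots,X_n)$. By the coupling of Remark~\ref{rem: discretization coupling} we may dominate each $X_j$ by the discrete variable $X_j' := \sum_{k=0}^\kappa q_k \xi_k^{(j)}$ from Lemma~\ref{lem: discretization} and Remark~\ref{rem: discretization bdd}, with $\xi_k^{(j)} \sim \Ber(2^{-k})$ independent across $(j,k)$, $0 \le q_k \le Kn$, $\sum_k q_k 2^{-k} \le 4$ and $\kappa \lesssim \log(Kn)$. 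Then $\prod_j W_j^{-1}(X) \le \prod_j W_j^{-1}(X')$ almost surely, so it suffices to bound the expectation in the Bernoulli model, where the randomness lives in the level counts $N_k := \sum_j \xi_k^{(j)} \sim \Binom(n, 2^{-k})$. On the typical event where each $N_k$ is within a constant factor of $n 2^{-k}$ --- controlled by Bennett across the $\kappa$ levels with total failure probability $\ll \e$ --- we have $\sum_j X_j' \lesssim n\sum_k q_k 2^{-k} \le 4n \ll T$, so $L = +\infty$ and $\prod_j W_j^{-1} = 1$.

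The main obstacle is bounding the contribution of the atypical event. I would stratify according to $m := |\{j : X_j > L\}|$: on that slice, the lower bound $L \ge CK\log\e^{-1}$ combined with $X_j \le Kn$ gives the worst-case per-factor ratio $X_j/L \le n/(C\log\e^{-1})$, and hence $\prod_j W_j^{-1} \le (n/(C\log\e^{-1}))^m$. A Bennett tail for $|J| \ge m$ must then pay this $m$-th power \emph{with an absolute constant $C$ that is independent of $K$ and $n$}. A fixed-threshold analysis instead gives $(1+\E(X-L_0)_+/L_0)^n$, which requires $L_0 \gtrsim n/\e$ and is useless here, so the entire improvement comes from the adaptivity $L = L(X)$ and from matching the Bennett bookkeeping across the $\kappa$ levels of the Bernoulli representation.
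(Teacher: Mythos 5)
Your construction is genuinely different from the paper's. The paper (Step~2 of its proof) decomposes each $X_j$ into Bernoulli levels $X_{jk}$ with parameters $p_k = 2^{-k}$ and defines a \emph{per-level} damping weight $W_{jk}$ that depends only on the level-$k$ count $\nu_k$; the total weight $W_j = \prod_k W_{jk}$. The payoff is independence across levels, so $\E(\prod_j W_j)^{-1} = \prod_k \E (\prod_j W_{jk})^{-1}$ factors, and each factor is bounded by $1 + (e/L)^{L p_k n}$. Your single data-adaptive truncation level $L(X)$, with $W_j = \min(1, L/X_j)$, is elegant and the easy parts all check out: the sum bound $\sum_j W_j X_j = \sum_j \min(X_j, L) \le T$ is automatic, the coordinatewise monotonicity of $\prod_j W_j^{-1}$ is correct, and the reduction to the Bernoulli model via coupling is valid.

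However, the crucial expectation bound is not established, and the specific estimate you propose does not close the gap. On the event $\{\sum_j X'_j > T\}$ you only use $L \ge T/n = CK\log\e^{-1}$, which gives $\prod_j W_j^{-1} \le \bigl(n/(C\log\e^{-1})\bigr)^m$. Controlling $m$ by the fact that $m \le M_0 := |\{j : X'_j > CK\log\e^{-1}\}|$ and $M_0 \sim \Binom(n,p_0)$ with $p_0 \lesssim 1/(CK\log\e^{-1})$, the stratified sum
$$
\sum_{M\ge 1} \Big(\frac{n}{C\log\e^{-1}}\Big)^{M} \Pr\{M_0 = M\}
$$
has $M=1$ term of order $n^2/(C^2 K (\log\e^{-1})^2)$, which grows with $n$ and is nowhere near $\e$. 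The reason this naive count fails is that you are throwing away the conditioning on $\{\sum_j X'_j > T\}$: with, say, $M_0=1$, the remaining $n-1$ entries (all $\le CK\log\e^{-1}$) would have to sum to roughly $T - Kn \gtrsim CK\log\e^{-1} n$, a huge deviation with probability $e^{-cn}$. Similarly, when $m$ is small the actual $L$ is much larger than $T/n$ because $mL \le T$, so the per-factor ratio $X'_j/L$ is far below the worst case $n/(C\log\e^{-1})$. Your sketch gestures at ``Bennett bookkeeping across the $\kappa$ levels'' but does not carry it out, and a correct argument would have to simultaneously track the size of $L$ and the overflow event rather than use the crude lower bound on $L$. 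The paper sidesteps all of this by making the weights factor over levels, so that the expectation of $\prod_j W_j^{-1}$ decomposes into a product of one-level calculations, each of which is an elementary binomial tail estimate; your global $L$ destroys this factorization, and that is precisely where the missing work lies.
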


\begin{remark}
Improvement in the order of $n$ in \eqref{eq: weights} does not require an extra boundedness assumption, and it was done in previous work \cite{RT}. We employ the same ideas as in \cite[Lemma 3.3]{RT} and obtain better (logarithmic) dependence on $\e$ in \eqref{eq: damped sum bdd} in trade of the additional assumption mentioned.
\end{remark}
\begin{proof}

{\bf Step 1: Bernoulli distribution.}
Let us first prove the theorem in the partial case where $X_j$ are scaled 
Bernoulli random variables. Assume that $X_j$ can take values $q$ and $0$, and 
\begin{equation}         \label{eq: Bernoulli}
\Pr{X_j = q} = p \ge \frac{1}{Kn}.
\end{equation}
Let $\nu$ denote the (random) number of nonzero $X_j$'s:
$$
\nu := \left| \{ j: \; X_j \ne 0 \} \right|, \quad \text{then} \quad \E \nu = pn.
$$
Here is how we will define the weights $W_j$. If $X_j=0$ then 
clearly there is no need to damp $X_j$ so put $W_j = 1$. The same applies
if the number $\nu$ of non-zero $X_j$'s does not significantly exceed its expectation $pn$.
Otherwise we damp all terms by the same amount $W_j \sim pn/\nu$.
Formally, we fix some parameter $L = L(K,\e)$ whose value we will determine later,
and set
$$
W_j := 
\begin{cases}
1, & \text{if } \nu \le Lpn \text{ or } X_j = 0 \\
Lpn / \nu, & \text{if } \nu > Lpn \text{ and } X_j \ne 0.
\end{cases}
$$

Let us check \eqref{eq: damped sum bdd}.
In the event when $\nu \le Lpn$, we have
$$
\sum_{j=1}^n W_j X_j = \sum_{j=1}^\nu 1 \cdot q = q\nu \le qLpn
= Ln \cdot \E X_1.
$$
And in the event when $\nu > Lpn$, we have
$$
\sum_{j=1}^n W_j X_j = \sum_{j=1}^\nu \frac{Lpn}{\nu} \cdot q = Lpnq = Ln \cdot \E X_1.
$$
as before. Thus, we showed that  
\begin{equation}         \label{eq: damped sum bdd L}
\sum_{j=1}^n W_j X_j \le Ln \cdot \E X_1 \le Ln \quad \text{almost surely}.
\end{equation}

Let us now check \eqref{eq: weights}. 
Since the lower bound is trivial, we will only have to check the upper bound.
We will again split the calculation into two cases based on the size of $\nu$.
If $\nu \le Lpn$ then all $W_j = 1$, so we trivially get 
$$
E_- := \E \Big( \prod_{j=1}^n W_j \Big)^{-1} \ind_{\{\nu \le Lpn\}} \le 1.
$$
If $\nu > Lpn$, then the definition of $W_j$ gives
\begin{align*}
E_+ := \E \Big( \prod_{j=1}^n W_j \Big)^{-1} \ind_{\{\nu > Lpn\}}
&= \E \Big( \frac{\nu}{Lpn} \Big)^\nu \ind_{\{\nu > Lpn\}} \\
&= \sum_{k=\lceil Lpn \rceil+1}^n \Big( \frac{k}{Lpn} \Big)^k \Pr{\nu = k}.
\end{align*}
Since $\nu \sim \Binom(n,p)$, we have
$$
\Pr{\nu = k} = \binom{n}{k} p^k \le \Big( \frac{enp}{k} \Big)^k,
$$
using a standard consequence of Stirling's approximation.
Thus 
$$
E_+ \le \sum_{k=\lceil Lpn \rceil+1}^n \Big( \frac{e}{L} \Big)^k
\le \Big( \frac{e}{L} \Big)^{Lpn},
$$
provided that $L \ge 10$. 
Thus we showed that 
\begin{equation}         \label{eq: weights L}
\E \Big( \prod_{j=1}^n W_j \Big)^{-1} 
\le E_- + E_+ 
\le 1 +\Big( \frac{e}{L} \Big)^{Lpn}
\le 1 + \Big( \frac{e}{L} \Big)^{L/K}
\end{equation}
where in the last step we used the assumption that $p \ge 1/Kn$ 
that we made in \eqref{eq: Bernoulli}.

Now that we have the bounds \eqref{eq: damped sum bdd L} and \eqref{eq: weights L}, 
it is enough to choose 
$$
L := C K \log \Big(\frac{1}{\e}\Big)
$$
which implies that $E \le 1+\e$. The proof for the Bernoulli distribution is complete.

\bigskip

{\bf Step 2. General distribution.}
Let us now now prove the theorem in full generality. 
First we discretize the distribution of $X_j$ using Lemma~\ref{lem: discretization}.
This result requires $X_j$ be continuous, which can be arranged by a standard approximation argument. 
For example, we can add a small Gaussian independent component to $X_j$ and then let the variance of this component
go to zero.  
Taking into account Remarks~\ref{rem: discretization bdd} and \ref{rem: discretization coupling}, we obtain independent, non-negative random variables $X'_j$ that satisfy 
$\E X'_j \le 4$ and such that 
$$
X_j \le X'_j = \sum_{k=1}^\kappa X_{jk}.
$$
Here $X_{jk}$ are independent random variables; each $X_{jk}$ can take 
values $q_k$ and $0$, and  
$$
\Pr{X_{jk} = q_k} = p_k
$$
with  
\begin{equation}         \label{eq: pk}
p_k = 2^{-k} \ge \frac{1}{Kn} \text{ for } k < \kappa, 
\quad p_\kappa = \frac{1}{Kn}.
\end{equation}

The argument will be similar to step~1 of the proof. For each level $k$ we let $\nu_k$ denote
number of non-zero $X_{jk}$'s:
$$
\nu_k := \left| \{ j: \; X_{jk} \ne 0 \} \right|, \quad \text{then} \quad \E \nu = p_k n.
$$
Again, for each level $k$ define the weights $W_{jk}$ like in step~1:
$$
W_{jk} := 
\begin{cases}
1, & \text{if } \nu_k \le Lp_k n \text{ or } X_{jk} = 0 \\
Lp_k n / \nu_k, & \text{if } \nu_k > Lp_k n \text{ and } X_{jk} \ne 0.
\end{cases}
$$
Then we set 
$$
W_j := \prod_{k=1}^\kappa W_{jk}, \quad j=1,\ldots,n.
$$

Let us check \eqref{eq: damped sum bdd}.
We have 
\begin{equation}         \label{eq: sum Wj Xj stratified}
\sum_{j=1}^n W_j X_j 
\le \sum_{j=1}^n W_j X'_j 
= \sum_{j=1}^n \sum_{k=1}^\kappa W_j X_{jk}  
\le \sum_{k=1}^\kappa \sum_{j=1}^n W_{jk} X_{jk},
\end{equation}
since $W_j \le W_{jk}$ by construction. Now, for each level $k$, we can 
use step~1 of the proof, where we showed in \eqref{eq: damped sum bdd L} that 
$$
\sum_{j=1}^n W_{jk} X_{jk} \le Ln \cdot \E X_{1k}.
$$
Substituting into \eqref{eq: sum Wj Xj stratified}, we obtain 
\begin{equation}         \label{eq: damped sum bdd L general}
\sum_{j=1}^n W_j X_j \le Ln \cdot \sum_{k=1}^\kappa \E X_{1k}
= Ln \cdot \E X'_1 \le 5 Ln 
\end{equation}
by construction.

Let us now check \eqref{eq: weights}. 
The lower bound is trivial, and we will only have to check the upper bound.
For each level $k$, we can use step~1 of the proof, where we showed in \eqref{eq: weights L}
that 
$$
\E \Big( \prod_{j=1}^n W_{jk} \Big)^{-1} 
\le 1 +\Big( \frac{e}{L} \Big)^{Lp_k n}
\le 1 + e^{-Lp_k n},
$$
which is true as long as $L \ge 10$.
Then, by construction we have
\begin{align*}
\E \Big( \prod_{j=1}^n W_j \Big)^{-1}
&= \E \prod_{k=1}^\kappa \Big( \prod_{j=1}^n W_{jk} \Big)^{-1} \\
&= \prod_{k=1}^\kappa \E \Big( \prod_{j=1}^n W_{jk} \Big)^{-1} 
	\quad \text{(by independence)} \\
&\le \prod_{k=1}^\kappa \left( 1 + e^{-Lp_k n} \right)
\le \exp \Big( \sum_{k=1}^\kappa e^{-Lp_k n} \Big)
\end{align*}
where in the last step we used the inequality $1+x \le e^x$.
Recall from \eqref{eq: pk} that the exponents $p_k$ form a decreasing geometric 
progression with values $2^{-k}$ until the last (smallest) term of order $1/Kn$.
So this last term dominates the sum $\sum_{k=1}^\kappa e^{-Lp_k n}$, and we obtain 
\begin{equation}         \label{eq: weights L general}
\E \Big( \prod_{j=1}^n W_j \Big)^{-1} \le \exp( 2e^{-L/2K} ).
\end{equation}

Now that we have the bounds \eqref{eq: damped sum bdd L general} and \eqref{eq: weights L general}, 
it is enough to choose 
$$A_{ij}
L := C_{\ref{thm:  damping sum}} K \log \Big(\frac{1}{\e}\Big)
$$
with $C_{\ref{thm:  damping sum}} \ge 6K$ and the right hand side of \eqref{eq: weights L general} will be bounded by 
$$
\exp(2 \e^3) \le \exp(\e/2) \le 1+\e,
$$
as claimed. The proof of the theorem is complete.
\end{proof}

\section{The $2 \to \infty$ norm of random matrices}		\label{s: 2 to infty}
%==============

In this section we prove Theorem~\ref{main} under the additional assumption 
that all entries $A_{ij}$ of $A$ are not too large. Specifically, let us assume that 
\begin{equation}	\label{trunc_probab_model}
  |A_{ij}| \le \frac{\sqrt{n}}{2} \quad \text{almost surely}.
\end{equation}

\begin{lemma}[Bounding $2 \to \infty$ norm by removing a few columns]		\label{rows_cut}
  Consider an $n \times n$ random matrix $A$ with i.i.d. entries $A_{ij}$ which have mean zero and at most unit variance 
  and satisfy \eqref{trunc_probab_model}. Let $\e \in (0,1/2]$. 
  Then with probability at least $1 - \exp(-\e n)$, there exists a subset $J \in [n]$ with cardinality $|J| \le \e n$ 
 such that 
  $$
  \|A_{J^c}\|_{2 \to \infty} \le C \sqrt{\ln \e^{-1}} \cdot \sqrt{n}.
  $$
 \end{lemma}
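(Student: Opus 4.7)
The plan is to apply Theorem~\ref{thm: damping sum} row-by-row and then convert the resulting soft weights into a single hard column removal via a global Markov argument on the product of all weights.

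First, fix a row $i \in [n]$ and consider the i.i.d.\ sequence $X_j := A_{ij}^2$, $j \in [n]$. By the boundedness assumption~\eqref{trunc_probab_model} we have $0 \le X_j \le n/4$, and from mean zero plus at most unit variance we have $\E X_j \le 1$; hence Theorem~\ref{thm: damping sum} applies with $K = 1$ and the given $\e$. It yields random weights $W_{i,1}, \ldots, W_{i,n} \in [0,1]$, measurable with respect to the $i$th row of $A$, satisfying
\[
 \sum_{j=1}^n W_{i,j} A_{ij}^2 \;\le\; C_0 \log(\e^{-1})\, n \quad\text{a.s.},
 \qquad
 \E \Big(\prod_{j=1}^n W_{i,j}\Big)^{-1} \le 1 + \e.
\]
Since the rows of $A$ are independent, so are the products $\prod_j W_{i,j}$, and thus
\[
 \E \prod_{i,j} W_{i,j}^{-1} \;=\; \prod_{i=1}^n \E\Big(\prod_{j=1}^n W_{i,j}\Big)^{-1} \;\le\; (1+\e)^n \;\le\; e^{\e n}.
\]

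Markov's inequality then gives, with probability at least $1 - e^{-\e n}$,
\[
 \sum_{i,j} \log W_{i,j}^{-1} \;\le\; 2 \e n.
\]
On this event I set
\[
 J := \bigl\{ j \in [n] : W_{i,j} \le e^{-2} \text{ for some } i \in [n] \bigr\}.
\]
Each pair $(i,j)$ with $W_{i,j} \le e^{-2}$ contributes at least $\log(e^2) = 2$ to the sum above, so the number of such pairs is at most $\e n$, and in particular $|J| \le \e n$. For any $j \in J^c$ every row satisfies $W_{i,j} > e^{-2}$, so
\[
 \sum_{j \in J^c} A_{ij}^2 \;\le\; e^2 \sum_{j \in J^c} W_{i,j} A_{ij}^2 \;\le\; e^2 \sum_{j=1}^n W_{i,j} A_{ij}^2 \;\le\; e^2 C_0 \log(\e^{-1})\, n,
\]
and maximizing over $i$ gives the claimed bound $\|A_{J^c}\|_{2\to\infty} \le C \sqrt{\log \e^{-1}}\,\sqrt{n}$.

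The main difficulty is the last step: passing from a multiplicative, per-row statement about $\prod_j W_{i,j}$ to a combinatorial bound on the size of a single global column set $J$ without losing a factor of $n$. The key observation is that taking logarithms linearizes the problem and that any entry whose weight drops below the fixed constant $e^{-2}$ carries a uniformly positive cost; this is what converts the Markov bound $\sum_{i,j}\log W_{i,j}^{-1} \le 2\e n$ into $|J| \le \e n$. Minor book-keeping will handle the endpoint $\e = 1/2$ (outside the open interval in Theorem~\ref{thm: damping sum} but absorbable into $C_0$) and, if the distribution of $A_{ij}^2$ is not continuous, the same smoothing approximation already used in Step~2 of the proof of Theorem~\ref{thm: damping sum}.
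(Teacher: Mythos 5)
Your proof is correct and follows essentially the same approach as the paper: apply Theorem~\ref{thm: damping sum} row by row, bound $\E\prod_{i,j}W_{ij}^{-1} \le (1+\e)^n$, use Markov, and convert the resulting smallness of the product into a column set $J$ of size at most $\e n$. The only (cosmetic) difference is in the last conversion: the paper collapses the weights to $V_j := \prod_i W_{ij}$, defines $J := \{j : V_j < e^{-2}\}$, and applies Markov directly to the event $\{|J| > \e n\} \subset \{\prod_j V_j < e^{-2\e n}\}$, whereas you apply Markov to get $\sum_{i,j}\log W_{ij}^{-1} \le 2\e n$ and then count bad pairs $(i,j)$ with $W_{ij} \le e^{-2}$; your $J$ is therefore a subset of the paper's and the remaining bookkeeping is identical, so both yield $|J|\le \e n$ and the same row bound $\sum_{j\in J^c}A_{ij}^2 \le e^2 C\ln(\e^{-1})\, n$.
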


\begin{proof}
We apply Theorem~\ref{thm:  damping sum} for the squares of the elements in each row of $A$, i.e. for 
the random variables $(a_{i1}^2, \ldots, a_{in}^2)$. This gives us random weights $W_{ij} \in [0,1]$ which 
satisfy for each $i \in [n]$ that 
$$
\sum_{j=1}^n W_{ij} A_{ij}^2 \le C \log(\e^{-1}) n \quad \text{a.s.}; \qquad
\E \Big( \prod_{j=1}^n W_{ij} \Big)^{-1} \le \exp(\e).
$$
To make the same system of weights work for all rows, we define
$$
V_j := \prod_{i=1}^n W_{ij}\in [0,1], \quad j \in [n].
$$
Then obviously $V_j \le W_{ij}$ for every $i$, and so
\begin{equation}         \label{eq: Vj}
\sum_{j=1}^n V_j A_{ij}^2 \le C \log(\e^{-1}) n \;\; \forall i \quad \text{a.s.}; \qquad
\E \Big( \prod_{j=1}^n V_j \Big)^{-1} \le \exp(\e n).
\end{equation}

We will remove from $A$ the columns whose weights $V_j$ are too small, namely those in
$$
J := \{ j \in [n]: \; V_j < e^{-2} \}.
$$

Let us first check that 
\begin{equation}         \label{eq: J0 cardinality}
|J| \le \e n \quad \text{with probability at least } 1 - \exp(-\e n),
\end{equation}
as we claimed in the lemma.
Indeed, if $|J| > \e n$ then using that all $V_j \in[0,1]$ we have
$$
Z := \prod_{j=1}^n V_j \le \prod_{j \in J} V_j < e^{-2\e n}.
$$
But the probability of this event can be bounded by Markov's inequality: 
$$
\Pr{ Z < e^{-2 \e n} } 
= \Pr{ Z^{-1} > e^{2 \e n} }
\le e^{-2 \e n} \E Z^{-1}
\le e^{-\e n},
$$
where in the last bound we used \eqref{eq: Vj}. This proves \eqref{eq: J0 cardinality}.

It remains to check that all rows $B_i$ of the matrix $B = A_{[n] \times J_0^c}$ are bounded
as claimed. We have 
\begin{align*}
\|B_i\|_2^2 = \sum_{j \in J^c} A_{ij}^2
  &\le e^2 \sum_{j \in J^c} V_j A_{ij}^2 \quad \text{(by definition of $J$)} \\
  &\le e^2 \sum_{j=1}^n V_j A_{ij}^2 \quad \text{(since all $V_j \le 1$)} \\  
  &\le e^{2} C \ln(\e^{-1}) n \quad \text{(by \eqref{eq: Vj})}.
\end{align*}
Taking the square root of both sides completes the proof.
\end{proof}

\section{From $2 \to \infty$ norm to $\infty \to 2$ norm}			\label{s: two norms}
%==================

In this section we will control the $\infty \to 2$ norm of a random matrix. 
Our first task is to bound the $\infty \to 2$ norm by the simpler $2 \to \infty$ norm.  
There are two ways to do this, both of them going back to \cite{RT}. The resulting 
comparison inequalities are interesting in their own right; we state them 
in Lemmas~\ref{lem: infty to 2 by symmetrization} and \ref{lem: infty to 2 by permutations}.
The ultimate result of this section is Theorem~\ref{thm: infty to 2}, which gives an optimal bound $O(n)$
on the $\infty \to 2$ norm of a random matrix after removing a small fraction of columns.

\subsection{Using random signs}		
%---------------------

The first method is based on flipping the signs of the entries independently at random. 
Here is the main result of this section.

\begin{lemma}[From $2 \to \infty$ to $\infty \to 2$]		\label{lem: infty to 2 by symmetrization}
  Let $A$ be an $n \times n$ random matrix whose entries are independent, mean zero random variables. 
  Then 
  $$
  \E \|A\|_{\infty \to 2} \le C \sqrt{n} \cdot \E \|A\|_{2 \to \infty}.
  $$ 
\end{lemma}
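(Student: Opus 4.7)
The plan is to combine standard symmetrization with Bernstein-type concentration and a union bound over the $2^n$ sign patterns that define $\|\cdot\|_{\infty\to 2}$. The heuristic is straightforward: for any $x\in\{-1,1\}^n$, $\|Ax\|_2^2 = \sum_i \langle A_i, x\rangle^2$; each inner product has second moment $\|A_i\|_2^2 \le \|A\|_{2\to\infty}^2$, so the expected sum is at most $n\|A\|_{2\to\infty}^2$. The real work is making this uniform in $x$ without losing an additional $\sqrt n$.

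First, I would symmetrize. The functional $F(B) := \|B\|_{\infty\to 2}$ is a norm on matrices (the maximum of the seminorms $B\mapsto\|Bx\|_2$), so the standard Jensen-plus-independent-copy trick applies: letting $A'$ be an independent copy of $A$, $\E F(A) = \E F(\E'(A-A')) \le \E F(A-A')$, and since $A-A'$ has symmetric entries one can insert i.i.d. Rademachers $\varepsilon_{ij}$ in front of them without changing the distribution. The triangle inequality then yields
$$
  \E\|A\|_{\infty\to 2} \le 2\,\E\|A^s\|_{\infty\to 2},
$$
where $A^s := (\varepsilon_{ij}A_{ij})_{ij}$. Entry-wise sign flips preserve row $\ell_2$ norms, so $\|A^s\|_{2\to\infty} = \|A\|_{2\to\infty}$.

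Next, I would fix $A$ and work with the randomness of $\varepsilon$ only. For any fixed $x\in\{-1,1\}^n$, the coordinates $Y_i := (A^s x)_i = \sum_j \varepsilon_{ij} A_{ij} x_j$ are independent Rademacher sums across $i$, so by Hoeffding $\|Y_i\|_{\psi_2} \le C\|A_i\|_2 \le C\|A\|_{2\to\infty}$. Hence $\|A^s x\|_2^2 = \sum_i Y_i^2$ is a sum of independent sub-exponentials with $\|Y_i^2\|_{\psi_1} \le C\|A\|_{2\to\infty}^2$ and mean at most $\|A\|_F^2 \le n\|A\|_{2\to\infty}^2$. Bernstein's inequality then gives, for every $s \ge 1$,
$$
  \Pr[\varepsilon]{\|A^s x\|_2^2 \ge C s n\,\|A\|_{2\to\infty}^2} \le 2 e^{-csn}.
$$
Union-bounding over the $2^n$ choices of $x$, with $s$ chosen so that $cs > 2\log 2$, upgrades this to
$$
  \Pr[\varepsilon]{\|A^s\|_{\infty\to 2} \ge C\sqrt{n}\,\|A\|_{2\to\infty}} \le 2^{-n}
$$
conditionally on $A$. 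Integrating the exponentially decaying tail gives $\E_\varepsilon\|A^s\|_{\infty\to 2} \le C\sqrt n\,\|A\|_{2\to\infty}$, and taking expectation in $A$ and combining with the symmetrization step completes the proof.

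The main obstacle is the tension between the $2^n$ of the union bound and the Bernstein tail: beating $2^n$ requires $\|Y_i^2\|_{\psi_1}$ to be controlled \emph{uniformly in $x$} by $\|A\|_{2\to\infty}^2$, rather than in some average sense. The $2\to\infty$ norm is precisely the object that provides this uniform control, and the matching of scales is exactly what produces the sharp $\sqrt n$ factor in the lemma.
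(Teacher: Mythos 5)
Your proposal is essentially identical to the paper's proof: symmetrize to pass to $A^s=(\varepsilon_{ij}A_{ij})$ at the cost of a factor $2$, condition on $A$, note that each coordinate $(A^sx)_i$ is sub-gaussian with $\psi_2$ norm at most $\|A\|_{2\to\infty}$, apply Bernstein to $\|A^sx\|_2^2$, union bound over $x\in\{-1,1\}^n$, and integrate the exponential tail. The only cosmetic difference is that you spell out the independent-copy/Rademacher mechanism behind the symmetrization inequality, whereas the paper just cites it.
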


\begin{proof}
Let $\e_{ij}$ be independent Rademacher random variables (which are also independent of $A$)
and consider the random matrix
$$
\tilde{A} := (\e_{ij} A_{ij}).
$$
A basic symmetrization inequality (see \cite[Lemma~6.3]{LT}) yields
$$
\E \|A\|_{\infty \to 2} \le 2 \E \|\tilde{A}\|_{\infty \to 2}.
$$
Condition on $A$; the randomness now rests in the random signs $(\e_{ij})$ only.
It suffices to show that the conditional expectation satisfies
\begin{equation}         \label{eq: A tilde desired}
\E \|\tilde{A}\|_{\infty \to 2} \lesssim \sqrt{n} \cdot \|A\|_{2 \to \infty}.
\end{equation}
Recalling \eqref{eq: infty to 2}, we have
\begin{equation}         \label{eq: Atilde infty to 2}
\|\tilde{A}\|_{\infty \to 2} = \max_{x \in \{-1,1\}^n} \|\tilde{A}x\|_2. 
\end{equation}
According to the matrix-vector multiplication, we can express $\|\tilde{A}x\|^2$ 
as a sum of independent random variables
$$
\|\tilde{A}x\|_2^2 = \sum_{i=1}^n \xi_i^2 \quad \text{where} \quad 
\xi_i := \ip{\tilde{A}_i}{x} = \sum_{j=1}^n \e_{ij} A_{ij} x_j.
$$
Fix $x \in \{-1,1\}^n$. Using independence and \eqref{eq: 2 to infty}, we get 
$$
\E  \xi_i^2 = \sum_{j=1}^n (A_{ij} x_{ij})^2 = \sum_{j=1}^n A_{ij}^2 \le \|A\|_{2 \to \infty}^2,
$$
so 
\begin{equation}         \label{eq: Atilde expectation}
\E \sum_{i=1}^n \xi_i^2 \le n \|A\|_{2 \to \infty}^2.
\end{equation}
Moreover, the standard concentration results (\cite[Lemma~5.9]{V}) show that
each $\xi_i$ is a sub-gaussian random variable, and we have
$$
\|\xi_i\|_{\psi_2}^2 = \Big\| \sum_{j=1}^n \e_{ij} A_{ij} x_j \Big\|_{\psi_2}^2
\lesssim \sum_{j=1}^n (A_{ij} x_{ij})^2 \le \|A\|_{2 \to \infty}^2.
$$
Thus $\xi_i^2$ is a sub-exponential random variable (see \cite[Lemma~5.9]{V}) and 
\begin{equation}         \label{eq: Atilde sub-exponential}
\|\xi_i^2\|_{\psi_1} \lesssim \|\xi_i\|_{\psi_2}^2 \lesssim \|A\|_{2 \to \infty}^2.
\end{equation}

Applying Bernstein's concentration inequality \cite[Corollary~5.17]{V} together with \eqref{eq: Atilde expectation}
and \eqref{eq: Atilde sub-exponential}, we obtain
$$
\Pr{ \sum_{i=1}^n \xi_i^2 \ge n \|A\|_{2 \to \infty}^2 + t n \|A\|_{2 \to \infty}^2 }
\le \exp(-ctn)
$$
for all $t \ge 1$. Thus we obtained a bound on $\|\tilde{A}x\|_2^2 = \sum_{i=1}^n \xi_i^2$. 
It remains to recall \eqref{eq: Atilde infty to 2} and take a union bound over $x \in \{-1,1\}^n$.
It follows that the inequality
$$
\|\tilde{A}\|_{\infty \to 2}^2 \le (1+t) n \|A\|_{2 \to \infty}^2
$$
holds with probability at least
$$
1 - 2^n \exp(-ctn) \ge 1- \exp \left[ (1-ct)n \right],
$$
where $t \ge 1$ is arbitrary.
Integration of these tails implies \eqref{eq: A tilde desired}.
\end{proof}

We will need a minor variation of Lemma~\ref{lem: infty to 2 by symmetrization} 
that can be applied even when some of the columns of $A$ are removed. 

\begin{lemma}[From $2 \to \infty$ to $\infty \to 2$ for symmetric distributions]		\label{lem: symmetric distributions}
  Let $A$ be an $n \times n$ random matrix whose entries are independent, symmetric random variables.
  Let $J \subset [n]$ be a random subset, which is independent of the signs of the entries of $A$.
  Then 
  $$
  \|A_{J}\|_{\infty \to 2} \le C \sqrt{n} \|A_{J}\|_{2 \to \infty}
  $$
  with probability at least $1-e^{-n}$.
\end{lemma}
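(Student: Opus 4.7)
The plan is to leverage Lemma~\ref{lem: infty to 2 by symmetrization} by a conditioning argument on the magnitudes of the entries and on the set $J$. The symmetry of each entry allows us to decompose $A_{ij} = \e_{ij} |A_{ij}|$ where $(\e_{ij})$ are i.i.d.\ Rademacher signs, and the family of signs is independent of $(|A_{ij}|)$; by hypothesis it is also independent of $J$.

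First I would condition on both $(|A_{ij}|)$ and $J$, treating them as fixed. The matrix $A_J$ then becomes $(\e_{ij} |A_{ij}| \ind_{j \in J})_{i,j}$, a fixed matrix in magnitude whose entries have been randomly sign-flipped in the columns indexed by $J$. A crucial observation is that in this conditioning the quantity $\|A_J\|_{2 \to \infty} = \max_i \sqrt{\sum_{j \in J} |A_{ij}|^2}$ is deterministic, since it depends only on magnitudes and on $J$. So we need only control $\|A_J\|_{\infty \to 2}$ against a fixed number.

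Next I would rerun the computation inside the proof of Lemma~\ref{lem: infty to 2 by symmetrization}, but aiming for a high-probability bound instead of one in expectation. For each fixed $x \in \{-1,1\}^n$, write
\[
\|A_J x\|_2^2 = \sum_{i=1}^n \xi_i^2, \qquad \xi_i = \sum_{j \in J} \e_{ij} |A_{ij}| x_j.
\]
Hoeffding's inequality gives $\|\xi_i\|_{\psi_2}^2 \lesssim \sum_{j \in J} |A_{ij}|^2 \le \|A_J\|_{2 \to \infty}^2$, so each $\xi_i^2$ is sub-exponential with $\|\xi_i^2\|_{\psi_1} \lesssim \|A_J\|_{2 \to \infty}^2$. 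Bernstein's inequality then yields
\[
\P\Bigl\{\|A_J x\|_2^2 \ge (1+t)\, n\, \|A_J\|_{2 \to \infty}^2 \;\Big|\; (|A_{ij}|), J\Bigr\} \le \exp(-ctn)
\]
for all $t \ge 1$. Taking a union bound over the $2^n$ vectors $x \in \{-1,1\}^n$ and using \eqref{eq: infty to 2}, I would choose $t$ to be a sufficiently large absolute constant so that $2^n \exp(-ctn) \le e^{-n}$. This yields $\|A_J\|_{\infty \to 2} \le C \sqrt{n}\, \|A_J\|_{2 \to \infty}$ with conditional probability at least $1 - e^{-n}$, and since this bound is uniform in the conditioning, it holds unconditionally as well.

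The only subtlety — and it is mild — is ensuring that we can absorb the $2^n$ factor from the sign-vector union bound into the $e^{-n}$ tail; this is exactly why we need the exponent $ct$ in Bernstein to be enlarged beyond $\log 2$, i.e.\ why the constant $C$ in the conclusion has to be taken large enough. The independence assumption between $J$ and $(\e_{ij})$ is what makes the conditional argument legitimate: without it, the sign-flips could be correlated with the column selection and Bernstein could not be applied freely.
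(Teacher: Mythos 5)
Your proposal is correct and follows essentially the same route as the paper: both exploit the symmetry to realize $A_{ij}$ as $\e_{ij}|A_{ij}|$ (the paper phrases this as $A \overset{d}{=} (\e_{ij}A_{ij})$), condition on the magnitudes and on $J$ so that the Rademachers carry all the randomness, rerun the Hoeffding--Bernstein computation from Lemma~\ref{lem: infty to 2 by symmetrization} for the restricted matrix $A_J$, and then take $t$ to be a large absolute constant rather than integrating the tail. Your added remark that $\|A_J\|_{2\to\infty}$ is deterministic under this conditioning (since it depends only on magnitudes and on $J$) is a useful clarification of why the high-probability statement comes out cleanly without any integration.
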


\begin{proof}
It is quite straightforward to check this result by modifying the proof of Lemma~\ref{lem: infty to 2 by symmetrization}.
By the symmetry assumption, the matrix $\tilde{A} := (\e_{ij} A_{ij})$ has the same distribution as $A$. 
Conditioning on $A$ and $J$ leaves all randomness with the signs $(\e_{ij})$, as before. 
Then we repeat the reset of the proof of  Lemma~\ref{lem: infty to 2 by symmetrization} for the submatrix $A_J$, 
In the end, we choose $t$ to be a large absolute constant to complete the proof. 
\end{proof}
So, the only part of Lemma \ref{lem: infty to 2 by symmetrization} that does not work for a matrix with removed columns is the symmetrization part. In the following two sections we will develop the tools to overcome the extra symmetry assumption we have to add in Lemma \ref{lem: symmetric distributions}.

\subsection{Using random permutations}			\label{s: random permutations}
%--------------

We just showed how to convert an $\infty \to 2$ bound to a $2 \to \infty$ bound for random matrices by using 
random signs. Alternatively, one can use random permutations for the same purpose, and obtain the following bound.

\begin{lemma}[From $2 \to \infty$ to $\infty \to 2$]		\label{lem: infty to 2 by permutations}
  Let $A$ be an $n \times n$ random matrix with i.i.d. entries. 
  Then 
  $$
  \E \|A\|_{\infty \to 2} \le C \sqrt{n} \cdot \E \|A\|_{2 \to \infty} + C \E \|A \one\|_2,
  $$ 
  where $\one = (1,1,\ldots,1)$ denotes the vector whose all coordinates equal $1$.
\end{lemma}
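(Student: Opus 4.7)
The plan is to reduce to the mean-zero case, where Lemma~\ref{lem: infty to 2 by symmetrization} applies directly, by subtracting the common mean $\mu := \E A_{11}$; the residual ``all-ones'' shift will then be absorbed into the $\E\|A\one\|_2$ term via Jensen's inequality. We may assume $\mu$ is finite, since otherwise $\E\|A\one\|_2 = \infty$ and the bound is vacuous.

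The first step is to decompose $A = A^\circ + \mu J$, where $J$ is the $n \times n$ all-ones matrix and $A^\circ := A - \mu J$ has i.i.d., mean-zero entries. Two triangle inequalities will then do most of the work: in the $\infty \to 2$ norm,
$$\|A\|_{\infty \to 2} \;\le\; \|A^\circ\|_{\infty \to 2} + |\mu|\,\|J\|_{\infty \to 2} \;=\; \|A^\circ\|_{\infty \to 2} + |\mu|\, n^{3/2},$$
since $\|J\|_{\infty \to 2}$ is attained at $x = \one$; and row-by-row, $\|A^\circ\|_{2 \to \infty} \le \|A\|_{2 \to \infty} + |\mu|\sqrt{n}$, since each row of $\mu J$ has Euclidean norm $|\mu|\sqrt{n}$. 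Feeding these into Lemma~\ref{lem: infty to 2 by symmetrization} applied to the mean-zero matrix $A^\circ$ will yield
$$\E\|A\|_{\infty \to 2} \;\le\; C\sqrt{n}\,\E\|A\|_{2\to\infty} \;+\; C|\mu| n \;+\; |\mu|\, n^{3/2}.$$

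The last step is to absorb the two mean-dependent tails into $\E\|A\one\|_2$. By Jensen's inequality for the convex map $v \mapsto \|v\|_2$,
$$\E\|A\one\|_2 \;\ge\; \|\E(A\one)\|_2 \;=\; \|n\mu\,\one\|_2 \;=\; |\mu|\, n^{3/2},$$
which already dominates the leading residual; the subdominant $C|\mu| n$ is controlled by the same bound (since $n \le n^{3/2}$ for $n \ge 1$). Adjusting the absolute constant then delivers the claimed inequality. There is no real obstacle here — the key observation is that Jensen's lower bound on $\E\|A\one\|_2$ is exactly calibrated to pay for the rank-one ``mean part'' $\mu J$ that was discarded in passing to $A^\circ$.
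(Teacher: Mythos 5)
Your proof is correct, but it takes a genuinely different route from the paper's. You reduce to the mean-zero case by writing $A = A^\circ + \mu J$, apply the sign-symmetrization Lemma~\ref{lem: infty to 2 by symmetrization} to $A^\circ$, and pay for the discarded rank-one piece via Jensen's inequality $\E\|A\one\|_2 \ge \|\E(A\one)\|_2 = |\mu| n^{3/2}$; all the bookkeeping checks out ($\|J\|_{\infty\to 2} = n^{3/2}$, $\|J\|_{2\to\infty} = \sqrt{n}$, and the subdominant $|\mu| n$ term is absorbed since $n \le n^{3/2}$). The paper instead avoids centering entirely: it applies an independent random permutation to each row of $A$, uses the i.i.d.\ assumption to say the permuted matrix has the same distribution, and then runs a Bernstein-type concentration argument (via Lemma~\ref{lem: concentration permutations}) over the permutation randomness, with the term $\E\xi_i = \frac{2m-n}{n}\ip{A_i}{\one}$ producing the $\|A\one\|_2$ contribution. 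The trade-off is this: your argument is more elementary and reuses an existing lemma cleanly, but the paper's permutation device also delivers Lemma~\ref{lem: infty to 2 by permutations high prob} (the high-probability version) almost for free, because row permutations leave $\|A\|_{2\to\infty}$ and $\|A\one\|_2$ \emph{deterministically} unchanged. Your decomposition does not obviously give that high-probability variant, since $A^\circ$ has mean-zero but not necessarily symmetric entries, so Lemma~\ref{lem: symmetric distributions} is unavailable and Lemma~\ref{lem: infty to 2 by symmetrization} is only an in-expectation bound. For the statement as posed, though, your argument is complete and valid.
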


Before we turn to the proof, note that the only difference between Lemmas~\ref{lem: infty to 2 by symmetrization} and 
\ref{lem: infty to 2 by permutations} is the term $\E \|A \one\|_2$. It makes its appearance since there is no mean zero 
assumption on the entries. This term is usually quite innocent. Note also that \eqref{eq: infty to 2} trivially implies that
$$
\E \|A\|_{\infty \to 2} \ge \E \|A \one\|_2,
$$
so we have to control this term anyway.

\begin{proof}
Let us apply a random independent permutation $\pi_i$ to the elements of each row of $A$.
The resulting matrix $\tilde{A}$ has the same distribution of $A$ due to the i.i.d. assumption.
Condition on $A$; the randomness now rests in the random permutations $\pi_i$ only. 
It suffices to show that the conditional expectation satisfies 
\begin{equation}         \label{eq: A tilde desired permutations}
\E \|\tilde{A}\|_{\infty \to 2} \le C \sqrt{n} \cdot \|A\|_{2 \to \infty} + C \|A \one\|_2,
\end{equation}
Similarly to the proof of Lemma~\ref{lem: infty to 2 by symmetrization}, we express $\|\tilde{A}x\|^2$ 
as a sum of independent random variables
\begin{equation}         \label{eq: Ax expressed permutations}
\|\tilde{A}x\|_2^2 = \sum_{i=1}^n \xi_i^2 \quad \text{where} \quad 
\xi_i := \ip{\tilde{A}_i}{x} = \sum_{j=1}^n A_{i, \pi_i(j)} x_j.
\end{equation}
The concentration inequality for random permutations (Lemma~\ref{lem: concentration permutations})
states that each $\xi_i$ is a sub-gaussian random variable, and we have
$$
\|\xi_i - \E \xi_i\|_{\psi_2} \lesssim \|\tilde{A}_i\|_2 \le \|A\|_{2 \to \infty}.
$$
Just like in the proof of Lemma~\ref{lem: infty to 2 by symmetrization}, this implies that 
$$
\|(\xi_i - \E \xi_i)^2\|_{\psi_1} \lesssim \|A\|_{2 \to \infty}^2.
$$
Since the expectation is bounded by the $\psi_1$ norm (see e.g. \cite[Definition~5.13]{V}), we conclude that
$$
\E (\xi_i - \E \xi_i)^2 \lesssim \|(\xi_i - \E \xi_i)^2\|_{\psi_1} \lesssim \|A\|_{2 \to \infty}^2
$$
and thus
$$
\E \sum_{i=1}^n (\xi_i - \E \xi_i)^2 \lesssim n \|A\|_{2 \to \infty}^2.
$$
Applying Bernstein's inequality like in Lemma~\ref{lem: infty to 2 by symmetrization}, we find that
$$
\Pr{ \sum_{i=1}^n (\xi_i - \E \xi_i)^2 \ge n \|A\|_{2 \to \infty}^2 + t n \|A\|_{2 \to \infty}^2 }
\le \exp \left[ -c \min(t^2,t) n \right]
$$
for all $t \ge 0$. 
Thus, for any $t \ge 1$ we have with probability at least $1-\exp(-tn)$ that
\begin{equation}         \label{eq: centered sum bounded}
\sum_{i=1}^n (\xi_i - \E \xi_i)^2 \le (1+t) n \|A\|_{2 \to \infty}^2.
\end{equation}

From \eqref{eq: Ax expressed permutations} we see that we are almost done; 
we just need to remove $\E \xi_i$ from our bound. To this end, note that
\begin{equation}         \label{eq: Ax bounded two terms}
\|\tilde{A}x\|_2^2 
= \sum_{i=1}^n \xi_i^2 
\le 2 \sum_{i=1}^n (\xi_i - \E \xi_i)^2 + 2 \sum_{i=1}^n (\E \xi_i)^2.
\end{equation}
We have already bounded the first sum. As for the second one, the definition of $\xi$ in \eqref{eq: Ax expressed permutations}
yields
$$
\E \xi_i = \frac{2m-n}{n} \sum_{j=1}^n A_{ij} = \frac{2m-n}{n} \ip{A_i}{\one}
$$
where $m$ denotes the number of ones in $x_j$ and $A_i^\tran$ is the $i$-th row of $A$. Thus 
$$
\sum_{i=1}^n (\E \xi_i)^2 = \Big( \frac{2m-n}{n} \Big)^2 \sum_{i=1}^n \ip{A_i}{\one}^2 
\le \|A \one\|_2^2.
$$
We substitute this and \eqref{eq: centered sum bounded} into \eqref{eq: Ax bounded two terms} and obtain
that for any $t \ge 1$, 
$$
\|\tilde{A}x\|_2^2 \le 2(1+t) n \|A\|_{2 \to \infty}^2 + 2\|A \one\|_2^2
$$
with probability at least $1-\exp(-tn)$.

It remains to recall \eqref{eq: Atilde infty to 2} and take a union bound over $x \in \{-1,1\}^n$.
It follows that the inequality
\begin{equation}         \label{eq: Atilde bounded permutations whp}
\|\tilde{A}\|_{\infty \to 2}^2 \le  2(1+t) n \|A\|_{2 \to \infty}^2 + 2\|A \one\|_2^2
\end{equation}
holds with probability at least 
$$
1 - 2^n \exp(-ctn) \ge 1-\exp \left[ (1-ct)n \right]
$$
where $t \ge 1$ is arbitrary. Integration of these tails implies \eqref{eq: A tilde desired permutations}.
\end{proof}

It is worthwhile to mention a high-probability version of Lemma~\ref{lem: infty to 2 by permutations}. 

\begin{lemma}[From $2 \to \infty$ to $\infty \to 2$ with high probability]		\label{lem: infty to 2 by permutations high prob}
  Let $A$ be an $n \times n$ random matrix with i.i.d. entries. 
  Then with probability at least $1-e^{-n}$ we have
  $$
  \|A\|_{\infty \to 2} \le C \sqrt{n} \cdot \E \|A\|_{2 \to \infty} + C \E \|A \one\|_2,
  $$ 
  where $\one = (1,1,\ldots,1)$ denotes the vector whose all coordinates equal $1$.
\end{lemma}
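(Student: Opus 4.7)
My plan is to imitate the proof of Lemma~\ref{lem: infty to 2 by permutations} but retain the conditional high-probability estimate instead of integrating the tails out to an expectation bound. I first form the row-permuted matrix $\tilde{A}$ with $\tilde{A}_{ij} = A_{i, \pi_i(j)}$, where $\pi_1, \dots, \pi_n$ are i.i.d.\ uniform permutations on $[n]$, independent of $A$. Since the entries of $A$ are i.i.d., $\tilde{A} \overset{d}{=} A$. The decisive property I will use is that row-wise permutations preserve the multiset of entries in each row as well as the row sums, so $\|\tilde{A}\|_{2\to\infty} = \|A\|_{2\to\infty}$ and $\tilde{A}\one = A\one$ hold almost surely.

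Conditioning on $A$, for each fixed $x \in \{-1,1\}^n$ we have $\|\tilde{A}x\|_2^2 = \sum_{i=1}^n \xi_i^2$ with $\xi_i = \sum_j A_{i, \pi_i(j)} x_j$. Lemma~\ref{lem: concentration permutations} gives $\|\xi_i - \E\xi_i\|_{\psi_2} \lesssim \|A_i\|_2 \le \|A\|_{2\to\infty}$, and the $\xi_i$ are independent across $i$. Applying Bernstein to $\sum_i (\xi_i - \E\xi_i)^2$ together with the deterministic bound $\sum_i (\E\xi_i)^2 \le \|A\one\|_2^2$ extracted from the proof of Lemma~\ref{lem: infty to 2 by permutations} yields, for every $t \ge 1$,
$$
\|\tilde{A}x\|_2^2 \le 2(1+t) n \|A\|_{2\to\infty}^2 + 2\|A\one\|_2^2
$$
with conditional probability at least $1 - \exp(-ctn)$. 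A union bound over the $2^n$ sign vectors with $t = 3/c$ upgrades this to a bound on $\|\tilde{A}\|_{\infty\to 2}^2$ with probability $\ge 1 - e^{-n}$, which then holds unconditionally. Combined with $\tilde{A} \overset{d}{=} A$ and the invariances above, this transfers to
$$
\|A\|_{\infty\to 2} \le C\sqrt{n}\,\|A\|_{2\to\infty} + C\|A\one\|_2
$$
with probability at least $1 - e^{-n}$.

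The main obstacle is the final substitution: replacing the random norms $\|A\|_{2\to\infty}$ and $\|A\one\|_2$ on the right-hand side by their expectations, in order to match the stated inequality. Under only the i.i.d.\ hypothesis, sharp $(1-e^{-n})$-concentration of $\max_i \|A_i\|_2$ around its mean is not automatic, so some additional input is needed. In the context in which the lemma is actually applied, the entries of $A$ have been truncated at scale $\sqrt n$ (as in \eqref{trunc_probab_model}), and then each $\|A_i\|_2^2$ and each coordinate of $A\one$ is a sum of bounded i.i.d.\ variables; I would apply a Bernstein-type bound together with a union bound over the $n$ rows to control $\|A\|_{2\to\infty}$ and $\|A\one\|_2$ by constant multiples of their expectations on an event of probability at least $1 - e^{-n}$, and absorb the constants into $C$.
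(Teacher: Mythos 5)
Your argument is exactly the paper's: permute each row by an independent uniform permutation, apply Lemma~\ref{lem: concentration permutations} plus Bernstein and a union bound over $\{-1,1\}^n$ conditionally on $A$, then use the row-permutation invariances $\|\tilde{A}\|_{2\to\infty}=\|A\|_{2\to\infty}$ and $\tilde{A}\one = A\one$ together with $\tilde{A}\overset{d}{=}A$ to transfer the bound. Up to that point everything is correct and matches the paper's proof step for step.

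The ``main obstacle'' you raise at the end --- replacing $\|A\|_{2\to\infty}$ and $\|A\one\|_2$ on the right-hand side by their expectations --- is not one you should try to surmount, because the appearance of $\E$ in the lemma statement is a typographical error. The paper's own proof terminates at
$$
\|A\|_{\infty\to 2} \;\le\; C\sqrt{n}\,\|A\|_{2\to\infty} + C\,\|A\one\|_2 \quad\text{with probability }\ge 1-e^{-n},
$$
with the random norms, not their expectations; no further substitution is made. Moreover, the only place the lemma is used, in the proof of Lemma~\ref{lem: tiny probability}, the argument ``$\EE\cap\FF$ implies \eqref{eq: tiny probability}'' only goes through if $\FF$ is the event with the random norms, since $\EE$ controls $\|A\|_{2\to\infty}$ and $\|A\one\|_2$ pointwise, not their expectations. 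So the statement you should be proving, and have proved, is the random-norm version. Your proposed extra Bernstein/truncation machinery to concentrate $\|A\|_{2\to\infty}$ around $\E\|A\|_{2\to\infty}$ is unnecessary (and, as you correctly note, would not even hold under the bare i.i.d.\ hypothesis of the lemma, which is further evidence that the expectations are a misprint).
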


\begin{proof}
At the end of the proof of Lemma~\ref{lem: infty to 2 by permutations}, we obtained 
inequality \eqref{eq: Atilde bounded permutations whp} which states (for large constant $t$) that
$$
\|\tilde{A}\|_{\infty \to 2} \le C \sqrt{n} \cdot \|A\|_{2 \to \infty} + C \|A \one\|_2
$$
with probability at least $1-e^{-n}$. 
Note that 
$$
\|A\|_{2 \to \infty} = \|\tilde{A}\|_{2 \to \infty} \quad \text{and} \quad
\|A \one\|_2 = \|\tilde{A} \one\|_2
$$
deterministically. Indeed, it is easy to check that permutations of the elements of the rows of $A$ do not
affect these two quantities. It follows that
$$
\|\tilde{A}\|_{\infty \to 2} \le C \sqrt{n} \cdot \|\tilde{A}\|_{2 \to \infty} + C \|\tilde{A} \one\|_2
$$
with probability at least $1-e^{-n}$. 
It remains to note that $\tilde{A}$ has the same distribution as $A$.
\end{proof}

\subsection{Bounding $2 \to \infty$ and $\infty \to 2$ norms with tiny probability}
%-------------------

Recall from Section~\ref{s: ideal norms} that ideally, we would want
$$
\|A\|_{2 \to \infty} \lesssim \sqrt{n} \quad \text{and} \quad
\|A\|_{\infty \to 2} \lesssim n
$$
with high probability. But this is too good to be true in our situation, where we assume only 
two moments for the entries of $A$. Nevertheless, we will now show that these bounds
still hold, albeit with exponentially small probability.

\begin{lemma}[$2 \to \infty$ and $\infty \to 2$ norms with tiny probability]		\label{lem: tiny probability}
  Let $A$ be an $n \times n$ random matrix whose entries are i.i.d. random variables
  with mean zero and at most unit variance. Let $\d \in (0,1/2)$. Then 
  \begin{equation}         \label{eq: tiny probability}
  \|A\|_{2 \to \infty} \le 2 \d^{-1} \sqrt{n} 
  \quad \text{and} \quad
  \|A\|_{\infty \to 2} \le C \d^{-1} n
  \end{equation}
  with probability at least $\frac{1}{2} \exp(-\d^2 n)$.
\end{lemma}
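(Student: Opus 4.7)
My plan is to combine a rare-event lower bound on the row norms with the permutation-concentration argument from the proof of Lemma~\ref{lem: infty to 2 by permutations high prob}. Set $R := 2\delta^{-1}\sqrt{n}$ and let $E_1 := \{\|A_i\|_2 \le R \text{ for all } i\}$; since $\E\|A_i\|_2^2 \le n$, Markov's inequality gives $\Pr{\|A_i\|_2 > R} \le \delta^2/4$, and by independence of the rows
$$
\Pr{E_1} \ge (1-\delta^2/4)^n \ge e^{-\delta^2 n/2},
$$
where the last inequality uses $\ln(1-x) \ge -2x$ on $[0,1/2]$. This already delivers the first conclusion $\|A\|_{2 \to \infty} \le R$ on $E_1$.

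For the second conclusion I will first control $\|A\one\|_2$ conditionally on $E_1$. The crucial structural point is that $E_1$ is a product event over the independent rows, so conditioning on it keeps the rows independent; consequently
$$
\E\bigl[\|A\one\|_2^2 \,\big|\, E_1\bigr]
= \sum_i \E\bigl[\ip{A_i}{\one}^2 \,\big|\, \|A_i\|_2 \le R\bigr]
\le \sum_i \frac{\E\ip{A_i}{\one}^2}{\Pr{\|A_i\|_2 \le R}}
\le \frac{n^2}{1-\delta^2/4} \le 2n^2.
$$
Markov then yields $\Pr{E_2 \mid E_1} \ge 1-\delta^2/8$, where $E_2 := \{\|A\one\|_2 \le 4\delta^{-1}n\}$.

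To transfer these row-level controls into a bound on $\|A\|_{\infty \to 2}$, I will apply random row-permutations. Let $\tilde A$ denote the matrix obtained from $A$ by independently permuting the entries within each row; by the i.i.d.\ assumption $\tilde A \overset{d}{=} A$, and permutations preserve row norms so $\|\tilde A\|_{2 \to \infty} = \|A\|_{2\to\infty}$ pointwise. The proof of Lemma~\ref{lem: infty to 2 by permutations high prob} (inequality \eqref{eq: Atilde bounded permutations whp}) establishes, conditionally on any $A$,
$$
\|\tilde A\|_{\infty \to 2}^2 \le 2(1+t) n \|A\|_{2 \to \infty}^2 + 2 \|A\one\|_2^2
$$
with probability at least $1-e^{(1-ct)n}$; choosing $t$ a large absolute constant makes this $\ge 1-e^{-n}$. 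On $A \in E_1 \cap E_2$ the right-hand side is bounded by $C^2\delta^{-2}n^2$ for an absolute $C$. Combining, via the distributional identity $A \overset{d}{=} \tilde A$ together with $\|\tilde A\|_{2 \to \infty} = \|A\|_{2\to\infty}$,
$$
\Pr{\|A\|_{2\to\infty} \le R,\; \|A\|_{\infty\to 2} \le C\delta^{-1}n}
\ge (1-e^{-n})\Pr{E_1 \cap E_2}
\ge \tfrac12 e^{-\delta^2 n},
$$
valid for $\delta \le 1/2$ and $n \ge 1$.

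The main obstacle is the extreme tightness of the target probability $\tfrac12 e^{-\delta^2 n}$: no constant-order probability losses are permitted once we are inside $E_1$. The product structure of $E_1$ is what makes this affordable. A direct Markov bound based on $\E\|A\one\|_2^2 \le n^2$ would only give $\Pr{E_2^c \cap E_1} \le \delta^2/16$, which once $\delta^2 n \gg 1$ exceeds $\Pr{E_1}$ and destroys the argument; passing to the conditional second moment replaces the dangerous factor $1/\Pr{E_1}$ by the innocuous $(1-\delta^2/4)^{-1}$, and the permutation step on top loses only an additional $1-e^{-n}$.
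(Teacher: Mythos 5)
Your proof is correct and follows the same two-step strategy as the paper: exploit the independence of the rows to produce the $e^{-\Theta(\delta^2 n)}$-probability event controlling both $\|A\|_{2\to\infty}$ and $\|A\one\|_2$, and then invoke the permutation comparison \eqref{eq: Atilde bounded permutations whp} to pass to $\|A\|_{\infty\to 2}$. The only (minor) difference is in how you control $\|A\one\|_2$: you condition on the product event $E_1$ and bound the conditional second moment of $\|A\one\|_2^2$, whereas the paper avoids conditioning entirely by bundling the per-row constraint $|\langle A_i,\one\rangle| \le 2\delta^{-1}\sqrt{n}$ into each per-row event $\mathcal{E}_i$ and multiplying the per-row probabilities; both routes exploit the same row independence and yield the same final bound.
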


\begin{proof}
We will first bound below the probability of the event 
$$
\EE := \left\{ \|A\|_{2 \to \infty} \le 2 \d^{-1} \sqrt{n}
\text{ and }
\|\tilde{A} \one\|_2 \le 2 \d^{-1} n \right\}
$$
and then use Lemma~\ref{lem: infty to 2 by permutations high prob} to 
control $\|A\|_{\infty \to 2}$. 

Recall from \eqref{eq: 2 to infty} that 
$$
\|A\|_{2 \to \infty} = \max_{i \in [n]} \|A_i\|_2
\quad \text{and} \quad
\|\tilde{A} \one\|_2^2 = \sum_{i=1}^n \ip{A_i}{\one}^2
$$
where $A_i^\tran$ denote the rows of $A$. Thus 
$\EE \subset \bigcap_{i=1}^n \EE_i$
where 
$$
\EE_i := \left\{ \|A_i\|_2 \le 2 \d^{-1} \sqrt{n}
\text{ and }
|\ip{A_i}{\one}| \le 2 \d^{-1} \sqrt{n} \right\}
$$
are independent events.
This reduces the problem to bounding the probability of each event $\EE_i$ below.

The assumptions on the entries of $A$ imply that 
$$
\E \|A_i\|_2^2 \le n
\quad \text{and} \quad
\E \ip{A_i}{\one}^2 \le n.
$$
Using Chebyshev's inequality, we see that
$$
\Pr{ \|A_i\|_2 > 2 \d^{-1} \sqrt{n} } \le \frac{\d^2}{4}
\quad \text{and} \quad
\Pr{ |\ip{A_i}{\one}| > 2 \d^{-1} \sqrt{n} } \le \frac{\d^2}{4}.
$$
Then a union bound yields
$$
\P(\EE_i) \ge 1 - \frac{\d^2}{2}.
$$
By independence of the events $\EE_i$, this implies
$$
\P(\EE) \ge \Big( 1 - \frac{\d^2}{2} \Big)^n
\ge \exp(-\d^2 n).
$$

Next we apply Lemma~\ref{lem: infty to 2 by permutations high prob}, which states that
the event 
$$
\FF := \left\{ \|A\|_{\infty \to 2} \le C \sqrt{n} \cdot \E \|A\|_{2 \to \infty} + C \E \|A \one\|_2 \right\}
$$
is likely: 
$$
\P(\FF) \ge 1 - \exp(-n).
$$
It follows that
$$
\P(\EE \cap \FF) \ge \exp(-\d^2 n) - \exp(-n) \ge \frac{1}{2} \exp(-\d^2 n).
$$
It remains to note that by definition of $\EE$ and $\FF$, 
the event $\EE \cap \FF$ implies the inequalities in \eqref{eq: tiny probability}.
\end{proof}

\subsection{Bounding $\infty \to 2$ norm with high probability}
%----------------

In the previous section, we were able to prove the optimal bounds
$$
\|A\|_{2 \to \infty} \lesssim \sqrt{n} \quad \text{and} \quad
\|A\|_{\infty \to 2} \lesssim n
$$
for a random matrix $A$, but they only hold with exponentially small probability. 
We claim that the probability of success can be increased to almost $1$ if we are allowed to 
remove a few columns of $A$. We already proved this fact for the $2 \to \infty$ norm in Lemma~\ref{rows_cut}. 
It is time to handle the $\infty \to 2$ norm. 

\begin{theorem}[Bounding $\infty \to 2$ norm by removing a few columns]				\label{thm: infty to 2}
  Consider an $n \times n$ random matrix $A$ with i.i.d. entries $A_{ij}$ which have mean zero and at most unit variance 
  and satisfy \eqref{trunc_probab_model}. Let $\e \in (0,1/2]$. 
  Then with probability at least $1 - 2\exp(-\e n)$, there exists a subset $J \in [n]$ with cardinality $|J| \le \e n$ 
  such that 
  $$
  \|A_{J^c}\|_{\infty \to 2} \le C \sqrt{\ln \e^{-1}} \cdot n.
  $$
\end{theorem}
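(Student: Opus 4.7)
The plan is to promote the $2 \to \infty$ bound of Lemma~\ref{rows_cut} to an $\infty \to 2$ bound using the symmetric-case comparison of Lemma~\ref{lem: symmetric distributions}, interposing a symmetrization step to handle the absence of symmetry in the distribution of the entries of $A$. First I would produce a candidate $J$ that is simultaneously good for a $2 \to \infty$ bound \emph{and} independent of the signs that Lemma~\ref{lem: symmetric distributions} requires. To this end, introduce an independent copy $A'$ of $A$ on the same probability space and let $B := A - A'$, so that the entries of $B$ are i.i.d., mean zero, symmetric, have variance at most $2$, and (after harmless rescaling) are bounded by $\sqrt{n}/2$. The key observation is that the set $J$ produced by Lemma~\ref{rows_cut} is built from the damping weights $W_{ij}$ and $V_j = \prod_i W_{ij}$, which depend only on the squared entries $B_{ij}^2$; hence $J = J(|B|)$ can be taken as a measurable function of $|B|$ alone, independent of the signs of $B$. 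Applying Lemma~\ref{rows_cut} to $B$ then yields, with probability at least $1 - e^{-\e n/2}$, a subset $J$ with $|J| \le \e n/2$ and $\|B_{J^c}\|_{2 \to \infty} \le C\sqrt{n \ln \e^{-1}}$.

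Because $B$ has symmetric entries and $J$ is independent of their signs, Lemma~\ref{lem: symmetric distributions} applies to $B$ and this $J$, giving
\[
\|B_{J^c}\|_{\infty \to 2} \le C\sqrt{n}\,\|B_{J^c}\|_{2 \to \infty} \le C n \sqrt{\ln \e^{-1}}
\]
on an intersected event of probability at least $1 - 2e^{-\e n/2}$.

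The main obstacle is the desymmetrization step: converting the bound on $\|B_{J^c}\|_{\infty \to 2}$ to one on $\|A_{J^c}\|_{\infty \to 2}$. Since $A = B + A'$, the triangle inequality reduces the task to bounding $\|A'_{J^c}\|_{\infty \to 2}$, but $J = J(|A - A'|)$ depends on $A'$, and by exchangeability of $A$ and $A'$ through the symmetric quantity $|B|$, the variables $\|A_{J^c}\|_{\infty\to 2}$ and $\|A'_{J^c}\|_{\infty\to 2}$ are identically distributed, so a naive symmetrization is circular. The workaround is to invoke Lemma~\ref{lem: tiny probability}: with probability at least $\tfrac{1}{2} e^{-\delta^2 n}$, one has $\|A'\|_{\infty \to 2} \lesssim \delta^{-1} n$, which deterministically dominates $\|A'_{J^c}\|_{\infty \to 2}$. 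The hard part will be to combine this positive-probability control of $A'$ with the high-probability control of $B$ through a careful exchangeability-plus-truncation argument — a refinement of the method in \cite{RT} engineered to retain the logarithmic (rather than polynomial) dependence on $\e^{-1}$ — in order to reach the claimed probability $1 - 2\exp(-\e n)$.
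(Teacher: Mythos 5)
Your proposal correctly identifies the relevant ingredients (Lemma~\ref{rows_cut}, Lemma~\ref{lem: symmetric distributions}, Lemma~\ref{lem: tiny probability}, and symmetrization via $B = A - A'$), and you correctly diagnose that the naive desymmetrization step is circular: from $A = B + A'$ and the triangle inequality you would need to bound $\|A'_{J^c}\|_{\infty\to 2}$ with high probability, but this quantity has essentially the same distribution as the object you are trying to control. However, the proposal stops at exactly the point where the real work begins — you flag ``the hard part will be to combine this positive-probability control of $A'$ with the high-probability control of $B$'' — and the mechanism you gesture at (``exchangeability-plus-truncation'') is not what makes the argument close. The attempted direction (push a \emph{good bound} from $B_{J^c}$ through to $A_{J^c}$ via the triangle inequality) cannot be made to work because it demands high-probability control of $\|A'_{J^c}\|_{\infty\to 2}$, which is exactly what we lack.

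The paper's argument is organized in the opposite direction, around \emph{bad events} and an event inclusion, and this reversal is the key missing idea. Define the bad event $\BB$ = ``$A$ admits a $2\to\infty$-good set $J$ but admits no $\infty\to 2$-good set,'' the analogous bad event $\tilde{\BB}$ for the symmetrized matrix $\tilde{A} = A - A'$ (with doubled/halved constants), and the \emph{good} event $\GG$ = ``$A'$ satisfies both norm bounds without removing any columns'' (this is exactly what Lemma~\ref{lem: tiny probability} controls, with only a tiny-but-positive probability $\gtrsim \exp(-cn/\ln\e^{-1})$ after an appropriate choice of $\d$). The reverse triangle inequality then gives the inclusion $\BB \cap \GG \subset \tilde{\BB}$: if $A$ has a good $2\to\infty$ set $J$ and $A'$ is globally good, then $\tilde{A}$ inherits a good $2\to\infty$ set; and if $A$ has no good $\infty\to 2$ set while $A'$ is globally small in $\infty\to 2$, then $\tilde{A}$ also has no good $\infty\to 2$ set. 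Since $\BB$ depends only on $A$ and $\GG$ only on $A'$, they are independent, so $\P(\BB)\,\P(\GG) \le \P(\tilde{\BB})$, and therefore $\P(\BB) \le \P(\tilde{\BB})/\P(\GG)$. With $\P(\tilde{\BB}) \le e^{-n}$ (from Lemma~\ref{lem: symmetric distributions}, using that the minimizing set $J$ for the $2\to\infty$ norm of $\tilde{A}$ is a function of $|\tilde{A}|$ only and hence independent of the signs) and $\P(\GG) \gtrsim e^{-n/(4\ln \e^{-1})}$, the quotient is still $\le e^{-n/2}$. This ``divide by a small probability'' step — not truncation — is what breaks the circularity; without it your proposal has a genuine gap at its central step.
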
	

\begin{proof}
{\bf Step 1: Defining the two key events.}
We will be interested in the two key events that suitably control the $2 \to \infty$ and $\infty \to 2$ norms 
of a random matrix. Thus, for a random matrix $B$ and numbers $r,K \ge 0$, we define
\begin{align*}
\EE_{2 \to \infty}(B,r,K) &:= \left\{ \exists J, \, |J| \le r \e n: \; \|B_{J^c}\|_{2 \to \infty} \le K \sqrt{\ln \e^{-1}} \cdot \sqrt{n} \right\}, \\
\EE_{\infty \to 2}(B,r,K) &:= \left\{ \exists J, \, |J| \le r \e n: \; \|B_{J^c}\|_{\infty \to 2} \le K \sqrt{\ln \e^{-1}} \cdot n \right\}.
\end{align*}
In terms of these events, we want to show that 
$$
\P \left( \EE_{\infty \to 2}(A,1,C)^c \right) \le 2\exp(-\e n),
$$
while Lemma~\ref{rows_cut} can be stated as 
$$
\P \left( \EE_{2 \to \infty}(A,1,C') \right) \ge 1 - \exp(-\e n).
$$
for some absolute constant $C'$.
Since the latter event is so likely, intersecting with it would not cause much harm.
Indeed, we will show that the bad event 
$$
\BB := \EE_{2 \to \infty}(A,1,C') \cap \EE_{\infty \to 2}(A,1,C)^c
$$
satisfies 
\begin{equation}         \label{eq: prob BB}
\P(\BB) \le \exp(-n/2).
\end{equation}
This would finish the proof, since we would then have 
$$
\P \left( \EE_{\infty \to 2}(A,1,C)^c \right) \le \exp(-n/2) + \exp(-\e n) \le 2\exp(-\e n)
$$
as required.

\medskip

{\bf Step 2: Symmetrization.}
As an intermediate step, let us bound the probability of a symmetrized version of $\BB$, namely the event
$$
\tilde{\BB} := \EE_{2 \to \infty}(\tilde{A},1,2C') \cap \EE_{\infty \to 2}(\tilde{A},1,C/2)^c
$$
where 
$$
\tilde{A} := A - A'
$$
and $A'$ is an independent copy of the random matrix $A$.
We claim that 
\begin{equation}         \label{eq: prob Btilde}
\P(\tilde{\BB}) \le \exp(-n).
\end{equation}

To prove this claim, choose a subset $J$, $|J| \le \e n$, that minimizes $\|\tilde{A}_{J^c}\|_{2 \to \infty}$.
Recall from \eqref{eq: 2 to infty} that the $2 \to \infty$ norm of a matrix is determined by the Euclidean norms 
of the columns and thus does not depend on the signs of the matrix elements. Thus $J$ is independent 
of the signs of the elements of $\tilde{A}$. This makes it possible to use Lemma~\ref{lem: symmetric distributions}
for the matrix $\tilde{A}$ and the random set $J^c$. It gives 
\begin{equation}         \label{eq: part of B}
\|\tilde{A}_{J^c}\|_{\infty \to 2} \lesssim \sqrt{n} \|\tilde{A}_{J^c}\|_{2 \to \infty}
\end{equation}
with probability at least $1-\exp(-n)$.

Then, turning to $\tilde{\BB}$, we can bound its probability as follows: 
$$
\P(\tilde{\BB}) \le \P(\tilde{\BB} \text{ and \eqref{eq: part of B}}) + \exp(-n).
$$
To prove the claim, it remains to check that $\tilde{\BB}$ and \eqref{eq: part of B} can not hold together.
Assume they do; then 
$$
\|\tilde{A}_{J^c}\|_{\infty \to 2} \lesssim \sqrt{n} \cdot 2C' \sqrt{\ln \e^{-1}} \sqrt{n} 
\lesssim \sqrt{\ln \e^{-1}} \cdot n, 
$$
which contradicts the event $\EE_{\infty \to 2}(\tilde{A},1,C/2)^c$ in the definition of $\tilde{\BB}$
for a suitably chosen constant $C$. This completes the proof of the claim \eqref{eq: prob Btilde}.

\medskip

{\bf Step 3. Using the small-probability bounds.}
The last piece of information we will use is the conclusion of Lemma~\ref{lem: tiny probability}
for $\d := 1/(2\ln \e^{-1})$. It states that the good event 
$$
\GG := \EE_{2 \to \infty}(A',0,C') \cap \EE_{\infty \to 2}(A',0,C/2)
$$
is likely to happen: 
\begin{equation}         \label{eq: prob GG}
\P(\GG) \ge \frac{1}{2} \exp \Big( - \frac{n}{4 \ln \e^{-1}} \Big).
\end{equation}
Note in passing that there is no guarantee that this statement would hold for the same constants $C$ and $C'$
as we chose in the definition of $\BB$ above. However, we can make this happen by adjusting these constants upwards
as necessary. The reader can easily check both \eqref{eq: prob Btilde} and \eqref{eq: prob GG} would still hold after 
such an adjustment.

We claim that 
\begin{equation}         \label{eq: BB GG inclusion}
\BB \cap \GG \subset \tilde{\BB}.
\end{equation}
To see this, recall that each of $\BB$, $\GG$ and $\tilde{\BB}$ is defined an an intersection of two 
events, one controlling $2 \to \infty$ norm and the other, $\infty \to 2$ norm. Thus it suffices to check the
inclusion for each of these two parts separately. Namely, the claim \eqref{eq: BB GG inclusion} 
would follow at once if we show that
\begin{align*}
\EE_{2 \to \infty}(A,1,C') &\cap \EE_{2 \to \infty}(A',0,C') \subset \EE_{2 \to \infty}(\tilde{A},1,2C') \quad \text{and} \\
\EE_{\infty \to 2}(A,1,C)^c &\cap \EE_{\infty \to 2}(A',0,C/2) \subset \EE_{\infty \to 2}(\tilde{A},1,C/2)^c.
\end{align*}
Both these inclusions are straightforward to check from the definitions of the events $\EE_{2 \to \infty}$ and $\EE_{\infty \to 2}$, 
remembering that $\tilde{A} = A-A'$ and using triangle triangle inequality. This verifies the claim \eqref{eq: BB GG inclusion}.

The event $\BB$ is determined by $A$, and $\GG$ is determined by $A'$ only. Thus $\BB$ and $\GG$ are independent, 
and \eqref{eq: BB GG inclusion} gives
$$
\P(\BB) \, \P(\GG) = \P(\BB \cap \GG) \le \PP(\tilde{\BB}).
$$
Thus, using \eqref{eq: prob Btilde} and \eqref{eq: prob GG}, we conclude that
$$
\P(\BB) \le \P(\tilde{\BB}) / \P(\GG) \le 2 \exp \Big( -n + \frac{n}{4 \ln \e^{-1}} \Big) 
\le \exp(-n/2).
$$
We have shown \eqref{eq: prob BB} and thus have completed the proof of the theorem. 
\end{proof}

\section{From $\infty \to 2$ norm to the operator norm: controlling the bounded entries}		\label{s: bounded}
%===========================

In Theorem~\ref{thm: infty to 2}, we gave an optimal $O(n)$ bound for the $\infty \to 2$ norm of a 
random matrix with few removed columns. We will now convert this into an optimal $O(\sqrt{n})$ bound
for the operator norm. This can be done by applying a form of Grothendieck-Pietsch theorem 
(see \cite[Proposition 15.11]{LT}), which has been used recently in \cite[section 3.2]{LV} in a similar context. 

\begin{theorem}[Grothendieck-Pietsch]\label{G-P}
Let $B$ be a $k \times m$ real matrix and $\delta > 0$. 
Then there exists $J \subset [m]$ with $|J| \le \delta m$ such that
$$
\|B_{J^c} \| \le \frac{2 \|B\|_{\infty\to 2}}{\sqrt{\delta m}}.
$$
\end{theorem}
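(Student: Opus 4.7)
The plan is to invoke the Grothendieck--Pietsch factorization for $\infty \to 2$ operators as a black box and then perform an elementary weight--thresholding argument. Viewing $B$ as an operator $B : \ell_\infty^m \to \ell_2^k$, the cited factorization (\cite[Proposition 15.11]{LT}) produces a probability measure $(\lambda_j)_{j=1}^m$ on $[m]$, i.e.\ nonnegative weights with $\sum_{j=1}^m \lambda_j = 1$, such that
$$
\|Bx\|_2 \;\le\; 2\,\|B\|_{\infty \to 2}\,\Bigl(\sum_{j=1}^m \lambda_j\,x_j^2\Bigr)^{1/2} \qquad \text{for every } x \in \R^m.
$$
This factorization is the only nontrivial ingredient and supplies the constant $2$ appearing in the theorem.

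Next I would identify the columns carrying too much weight and simply discard them. Define
$$
J \;:=\; \Bigl\{\,j \in [m] \,:\, \lambda_j > \tfrac{1}{\delta m}\,\Bigr\}.
$$
Since the $\lambda_j$ sum to $1$, Markov's inequality immediately gives $|J| \le \delta m$, as required.

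It remains to bound $\|B_{J^c}\|$. Let $x \in \R^m$ be an arbitrary unit vector supported on $J^c$. Applying the factorization to this $x$ and using that $\lambda_j \le 1/(\delta m)$ for every $j \in J^c$, we obtain
$$
\|B_{J^c} x\|_2 \;=\; \|B x\|_2 \;\le\; 2\,\|B\|_{\infty \to 2}\Bigl(\sum_{j \in J^c} \lambda_j\,x_j^2\Bigr)^{1/2} \;\le\; \frac{2\,\|B\|_{\infty \to 2}}{\sqrt{\delta m}}\,\|x\|_2.
$$
Taking the supremum over such $x$ yields the desired bound $\|B_{J^c}\| \le 2\,\|B\|_{\infty \to 2}/\sqrt{\delta m}$.

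Conceptually there is no real obstacle here, since Grothendieck--Pietsch does all the heavy lifting: it converts the mixed-norm bound $\|B\|_{\infty \to 2}$ into an $\ell_2$-type bound with respect to a single well-chosen weighted $\ell_2$ norm on the column index set, after which the pigeonhole/Markov step trivially trades a small number of columns for a uniform upper bound on the weights. The only step that would require genuine work if one insisted on a self-contained proof is the factorization itself, which proceeds via a Hahn--Banach separation applied to the convex functional $x \mapsto \|B\|_{\infty \to 2}^2 \|x\|_\infty^2 - \|Bx\|_2^2$; but this is entirely standard and not needed for the present application.
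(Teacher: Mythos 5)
The paper does not actually prove Theorem~\ref{G-P}; it states it as a known result, citing \cite[Proposition~15.11]{LT} and noting it was used in this form in \cite[Section~3.2]{LV}. So there is no ``paper proof'' to compare against line by line. That said, your argument is precisely the standard derivation that those references have in mind: invoke Pietsch factorization to obtain a probability vector $(\lambda_j)$ with $\|Bx\|_2 \le 2\|B\|_{\infty\to 2}\bigl(\sum_j \lambda_j x_j^2\bigr)^{1/2}$, discard by Markov the at most $\delta m$ coordinates with $\lambda_j > 1/(\delta m)$, and then for any unit $x$ supported on $J^c$ bound $\sum_{j\in J^c}\lambda_j x_j^2 \le (\delta m)^{-1}\|x\|_2^2$. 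The steps are correct, including the reduction $\|B_{J^c}\| = \sup\{\|Bx\|_2 : \|x\|_2\le 1,\ \mathrm{supp}(x)\subset J^c\}$, which holds because in the paper's notation $B_{J^c}y = B(y\mathds{1}_{J^c})$ and $\|y\mathds{1}_{J^c}\|_2 \le \|y\|_2$. One small remark: the factorization constant for real $\ell_\infty^m\to\ell_2^k$ is actually $\sqrt{\pi/2}<2$ (little Grothendieck plus Pietsch), so $2$ is just a clean upper bound, consistent with how the paper states the theorem; attributing the ``$2$'' to the factorization is harmless. Overall, a correct reconstruction of the argument the paper delegates to the literature.
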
 

Applying Theorem~\ref{thm: infty to 2} followed by Grothendieck-Pietsch theorem, 
we obtain the following result. 

\begin{lemma}[Bounding the operator norm by removing a few columns]				\label{lem: op columns}
  Consider an $n \times n$ random matrix $A$ with i.i.d. entries $A_{ij}$ which have mean zero and at most unit variance 
  and satisfy \eqref{trunc_probab_model}. Let $\e \in (0,1]$. 
  Then with probability at least $1 - 2\exp(-\e n/2)$, there exists a subset $J \in [n]$ with cardinality $|J| \le \e n$ 
  such that 
  $$
  \|A_{J^c}\| \le C \sqrt{\frac{\ln \e^{-1}}{\e}} \cdot \sqrt{n}.
  $$
\end{lemma}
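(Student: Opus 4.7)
The plan is to combine Theorem \ref{thm: infty to 2} with the Grothendieck--Pietsch factorization (Theorem \ref{G-P}) in a straightforward two-step fashion, splitting the $\e n$ ``column budget'' between the two steps.

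\textbf{Step 1: $\infty \to 2$ control.} First I would apply Theorem~\ref{thm: infty to 2} with parameter $\e/2$ in place of $\e$ (assuming $\e \le 1$ so $\e/2 \le 1/2$; if $\e$ is in a range where Theorem~\ref{thm: infty to 2} is not directly applicable, a trivial adjustment at the endpoint suffices). This produces a (random) subset $J_1 \subset [n]$ with $|J_1| \le (\e/2) n$ such that
$$
\|A_{J_1^c}\|_{\infty \to 2} \le C_1 \sqrt{\ln(2/\e)}\cdot n
$$
with probability at least $1 - 2\exp(-\e n/2)$. Call this event $\mathcal{E}_1$.

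\textbf{Step 2: Grothendieck--Pietsch.} On the event $\mathcal{E}_1$, I would then apply Theorem \ref{G-P} to the matrix $A_{J_1^c}$, viewed as an $n \times n$ matrix, with the choice $\delta := \e/2$. This is a deterministic step. It yields a subset $J_2 \subset [n]$ with $|J_2| \le (\e/2) n$ such that
$$
\bigl\|(A_{J_1^c})_{J_2^c}\bigr\| \le \frac{2\,\|A_{J_1^c}\|_{\infty \to 2}}{\sqrt{(\e/2)\,n}} \le \frac{2 C_1 \sqrt{\ln(2/\e)}\cdot n}{\sqrt{(\e/2)\,n}} \le C_2 \sqrt{\frac{\ln \e^{-1}}{\e}}\,\sqrt{n},
$$
using $\ln(2/\e) \lesssim \ln \e^{-1}$ on the relevant range of $\e$.

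\textbf{Step 3: Combine.} Setting $J := J_1 \cup J_2$, the subset $J$ has $|J| \le \e n$. Moreover, zeroing out the columns in $J_1$ and then in $J_2$ is the same as zeroing out the columns in $J$, so $A_{J^c} = (A_{J_1^c})_{J_2^c}$. The bound from Step 2 therefore gives exactly
$$
\|A_{J^c}\| \le C \sqrt{\frac{\ln \e^{-1}}{\e}}\,\sqrt{n},
$$
on the event $\mathcal{E}_1$, which has probability at least $1 - 2 \exp(-\e n/2)$, as required.

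There is no real obstacle here: all the hard work has already been done in Theorem~\ref{thm: infty to 2} (which contains both the damping argument of Section~\ref{s: 2 to infty} and the symmetrization step of Section~\ref{s: two norms}), and the conversion from $\infty \to 2$ norm to operator norm via Grothendieck--Pietsch is deterministic and loses no probability. The only thing to watch is the bookkeeping: one must halve $\e$ in each of the two steps so that the final removed set has size at most $\e n$, and one must check that the resulting constants and logarithmic factors combine into the stated $\sqrt{\ln \e^{-1}/\e}\cdot \sqrt{n}$ form, which follows from $\ln(2/\e) \asymp \ln \e^{-1}$ up to an absolute multiplicative constant.
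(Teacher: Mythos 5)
Your proposal is correct and is essentially the same argument as the paper's proof: both apply Theorem~\ref{thm: infty to 2} with parameter $\e/2$ and then Grothendieck--Pietsch (Theorem~\ref{G-P}) with $\delta = \e/2$, splitting the column budget between the two deterministic-plus-probabilistic steps. The only cosmetic difference is that the paper applies Grothendieck--Pietsch to $A_{J_1^c}$ viewed as a matrix with $|J_1^c|$ columns (using $|J_1^c| \ge n/2$), whereas you keep the ambient $n$ columns; both yield the same order of bound.
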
	
 
\begin{proof} 
Apply Theorem~\ref{thm: infty to 2} for $\e/2$ instead of $\e$. 
We obtain a subset of columns $J_1 \subset [n]$, $|J_1| \le \e n/2$, which satistfies 
\begin{equation}         \label{eq: bounding infty to 2}
\|A_{J_1^c}\|_{\infty \to 2} \le C \sqrt{\ln \e^{-1}} \cdot n
\end{equation}
with probability at least $1 - 2\exp(-\e n/2)$.

Next apply Grothendieck-Pietsch Theorem~\ref{G-P} for the matrix $A_{J_1^c}$ and for $\delta = \e/2$. 
We obtain a further subset $J_2 \subset J_1^c$, $|J_2| \le \d |J_1^c| \le \e n/2$, such that 
the removal of columns in both $J := J_1 \cup J_2$ leads to 
$$
\|A _{J^c}\| 
\le \frac{2 \|A _{J_1^c}\|_{\infty\to 2}}{\sqrt{\d |J_1^c|}} 
\lesssim C \sqrt{\frac{\ln \e^{-1}}{\e}} \cdot \sqrt{n}.
$$
In the last inequality, we used the bound \eqref{eq: bounding infty to 2} and that 
$\d = \e/2$ and $|J_1^c| \ge n - \e n/2 \ge n/2$.
The proof is complete.
\end{proof}

We are ready to prove a partial case of Theorem~\ref{main}, for the matrices whose entries 
are $O(\sqrt{n})$. It follows by applying Lemma~\ref{lem: op columns} for $A$ and $A^\tran$ separately, 
and then superposing the results.

\begin{proposition}[Bounded entries]		\label{prop: bounded entries}
  Consider an $n \times n$ random matrix $A$ with i.i.d. entries $A_{ij}$ which have mean zero and at most unit variance 
  and satisfy \eqref{trunc_probab_model}. Let $\e \in (0,1]$.  Then with probability at least $1 - 4\exp(- \e n/2)$, 
  there exists an $\e n \times \e n$ submatrix of $A$ such that replacing all of its entries with zero 
  leads to a well-bounded matrix $\tilde{A}$: 
  $$
  \|\tilde{A}\| \leq C \sqrt{\frac{\ln \e^{-1}}{\e}} \cdot \sqrt{n}.
  $$
\end{proposition}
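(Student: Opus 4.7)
The plan is to obtain the proposition by applying Lemma~\ref{lem: op columns} to both $A$ and its transpose $A^\tran$, and then combining the two column-removal statements into a single submatrix-removal statement via triangle inequality. Transposition preserves all the hypotheses on the entries of $A$ (i.i.d., mean zero, at most unit variance, and bounded by $\sqrt{n}/2$), so Lemma~\ref{lem: op columns} applies equally to $A^\tran$.

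First, I would apply Lemma~\ref{lem: op columns} to $A$ to obtain a column set $J \subset [n]$ with $|J| \le \e n$ satisfying
$$
\|A_{[n] \times J^c}\| \le C \sqrt{\frac{\ln \e^{-1}}{\e}} \cdot \sqrt{n},
$$
on an event of probability at least $1 - 2\exp(-\e n/2)$. Second, I would apply the same lemma to $A^\tran$ to obtain a set $I \subset [n]$ with $|I| \le \e n$ such that, on an event of probability at least $1 - 2\exp(-\e n/2)$,
$$
\|A_{I^c \times [n]}\| \le C \sqrt{\frac{\ln \e^{-1}}{\e}} \cdot \sqrt{n}.
$$
By a union bound, both events occur simultaneously with probability at least $1 - 4\exp(-\e n/2)$.

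Now let $\tilde A$ be the matrix obtained by zeroing out the $\e n \times \e n$ submatrix of $A$ indexed by $I \times J$. The key observation is the decomposition
$$
\tilde A \;=\; A_{[n] \times J^c} \;+\; A_{I^c \times J},
$$
since the first summand supplies all entries of $A$ in columns $J^c$ (across all rows), while the second summand supplies the remaining entries in columns $J$ but only in rows $I^c$, which together cover exactly the complement of $I \times J$. By the triangle inequality,
$$
\|\tilde A\| \;\le\; \|A_{[n] \times J^c}\| \;+\; \|A_{I^c \times J}\|.
$$
The first term is controlled directly by the first bound. For the second, note that $A_{I^c \times J}$ is obtained from $A_{I^c \times [n]}$ by further zeroing out entries outside the columns $J$; since restriction to a column subset is a contraction in operator norm, $\|A_{I^c \times J}\| \le \|A_{I^c \times [n]}\|$, and the second bound applies.

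Summing the two contributions yields $\|\tilde A\| \le 2C\sqrt{\ln \e^{-1}/\e} \cdot \sqrt{n}$, which matches the claimed bound after absorbing the factor of $2$ into the absolute constant. The only thing to verify carefully is that the decomposition of $\tilde A$ is correct as a sum of two matrices with the prescribed supports; this is the main bookkeeping step, but there is no genuine obstacle since both applications of Lemma~\ref{lem: op columns} give us precisely the two norm bounds we need, and the submatrix $I \times J$ being zeroed has dimensions exactly $|I| \times |J| \le \e n \times \e n$ as required.
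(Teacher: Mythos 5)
Your proposal is correct and follows essentially the same argument as the paper: apply Lemma~\ref{lem: op columns} to $A$ and $A^\tran$, take a union bound, and decompose $\tilde A = A_{[n]\times J^c} + A_{I^c\times J}$ with a triangle inequality plus the contraction property of column restriction to bound the second summand by $\|A_{I^c\times[n]}\|$. The only superficial difference is phrasing (you call column restriction a contraction, the paper says adding columns can only increase the norm), but the underlying reasoning and the constants are identical.
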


\begin{proof}
Apply Lemma~\ref{lem: op columns} for $A$ and $A^\tran$. We obtain that with probability at least 
$1 - 4\exp(-\e n/2)$, there exists sets $I$ and $J$ with at most $\e n$ indices in each, and such that 
\begin{equation}         \label{eq: A Atran}
\|A_{[n] \times J^c}\| \lesssim \sqrt{\frac{\ln \e^{-1}}{\e}} \cdot \sqrt{n} \quad \text{and} \quad
\|A_{I^c \times [n]}\| \lesssim \sqrt{\frac{\ln \e^{-1}}{\e}} \cdot \sqrt{n}.
\end{equation}
We claim that $\tilde{A} := A_{(I \times J)^c}$ satisfies the conclusion of the proposition.
The support of this matrix, $(I \times J)^c$, is a disjoint union of two sets, $[n] \times J^c$ and $I^c \times J$. 
Then, using the triangle inequality, we have
$$
\|A_{(I \times J)^c}\| \le \|A_{[n] \times J^c}\| + \|A_{I^c \times J}\|.
$$
We already controlled the first term in \eqref{eq: A Atran}. As for the second term, since 
adding columns can only increase the operator norm, we have $\|A_{I^c \times J}\| \le \|A_{I^c \times [n]}\|$, 
which we also bounded in \eqref{eq: A Atran}. The proof is complete.
\end{proof}

\section{Controlling the large entries, and completing the proof of Theorem~\ref{main}}		\label{s: large entries}
%===========================

In the previous section, we proved a partial case of Theorem~\ref{main} that controls relatively 
small entries of $A$, those of the order $O(\sqrt{n})$. Larger entries will be controlled in this section. 

\subsection{Bernoulli random matrices and random graphs}
%----------

The following general lemma will help us analyze the patterns such large entries can form.

\begin{lemma}[Bernoulli random matrix]				\label{lem: Bernoulli}
  Let $B$ be an $n \times n$ random matrix whose entries are independent Bernoulli random 
  variables with mean $p$. Let $\e \in (0,1/2]$.
  Consider the rows of $B$ with more than $21pn + 2 \ln \e^{-1}$ ones. 
  Then with probability $1-\exp(-\e n/2)$, these rows have at most $\e n$ ones altogether.
\end{lemma}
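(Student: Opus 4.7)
Let $X_i := \sum_j B_{ij}$ denote the $i$th row sum, so $X_1,\ldots,X_n$ are i.i.d.\ $\Binom(n,p)$, and set $t := 21pn + 2\ln\e^{-1}$. We need to show $\P(Z > \e n) \le e^{-\e n/2}$, where $Z := \sum_{i=1}^n X_i \ind_{X_i > t}$ counts the ones in heavy rows. (The case $p > 1/21$ is vacuous, since then $t > n$ and no row can be heavy.)

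The first ingredient is a sharp tail bound for a single row. For any integer $k \ge t$, the inequality $k \ge 21pn$ and the standard binomial estimate $\P(X_i \ge k) \le \binom{n}{k} p^k \le (enp/k)^k$ yield $\P(X_i \ge k) \le (e/21)^k \le e^{-2k}$, using $\ln(21/e) > 2$. Combined with $t \ge 2\ln\e^{-1}$ this gives $\P(X_i > t) \le e^{-2t} \le \e^4 \cdot e^{-42pn}$, an extremely small single-row tail. The two summands of $t$ play distinct roles: the $21pn$ term produces the geometric factor $(e/21)^k$, while the $2\ln\e^{-1}$ term supplies the $\e^4$ that will absorb the combinatorial costs in the union bound below.

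I would then union bound over the random set $S := \{i : X_i > t\}$ of heavy rows. Grouping by $s = |S|$ and using exchangeability,
$$
\P(Z > \e n) \;\le\; \sum_{s=1}^n \binom{n}{s}\, \P\bigl(X_1,\ldots,X_s > t,\ X_1+\cdots+X_s > \e n\bigr),
$$
and I split the sum at $s^* := \e n/t$. When $s \ge s^*$, the sum condition is automatic from the heaviness conditions (since $st > \e n$), so the probability is bounded by $\P(X_1 > t)^s \le e^{-2st}$. When $s < s^*$, the inequality $t \ge 21pn$ gives $s < \e/(21p)$, and applying Chernoff to $X_1+\cdots+X_s \sim \Binom(sn,p)$ yields $\P(X_1+\cdots+X_s > \e n) \le (esnp/(\e n))^{\e n} \le (e/21)^{\e n} \le e^{-2\e n}$.

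Finally, inserting $\binom{n}{s} \le (en/s)^s$ and summing, both regimes collapse to an exponential bound of the form $\exp(-c \e n)$ with $c > 1/2$. In the first regime the critical term, attained near $s = s^*$, is $(et \, e^{-2t}/\e)^{\e n/t}$; in the second regime the total contribution is at most $n (et/\e)^{\e n/t}\, e^{-2\e n}$. The $\e^4$ factor hidden in $e^{-2t}$ suppresses the combinatorial factor $(et/\e)^{\e n/t}$ enough to furnish the target exponent $-\e n/2$ in both cases. I expect the main obstacle to be verifying this arithmetic uniformly in $\e \in (0,1/2]$: at the boundary $\e = 1/2$ the reservoir $2\ln\e^{-1}$ shrinks to $2\ln 2$ and the margin becomes slim, so the calculation closes only because $\ln(21/e) > 2$, which is precisely why the constant $21$ appears in the threshold.
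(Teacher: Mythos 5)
Your proof sets up a genuinely different route from the paper's. The paper controls $Z := \sum_i S_i\ind_{\{S_i > t\}}$ directly via the exponential-Markov (Bernstein) trick: it shows $\E e^{Z} \le \exp(ne^{-t})$ using only the per-row tail $\P(S_i > u) \le e^{-2u}$, and then $\P(Z>\e n)\le e^{-\e n}\E e^Z \le \exp((-\e + e^{-t})n)\le e^{-\e n/2}$ since $e^{-t}\le\e^2\le\e/2$. This is one line of arithmetic, with no union bound over subsets and no case split. Your approach instead enumerates the heavy set $S$, takes a union bound over $\binom{n}{s}$ choices, and splits at $s^* = \e n/t$; both ingredients (per-row tail and binomial tail of $\sum_{i\le s} X_i$) are correct, and the identification of $s^*$ is the right one.

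The gap is in the final arithmetic, which you flag but do not close, and which in fact does not close as written. Two issues. First, in regime two your bound $n(et/\e)^{\e n/t}e^{-2\e n}$ carries an extra factor of $n$; you need the sharper $\sum_{s\le s^*}\binom{n}{s}\le (en/s^*)^{s^*}$ (valid since $s^* = \e n/t \le n/(4\ln 2) < n/2$) to remove it, after which regime two contributes at most $(et/\e)^{\e n/t}e^{-2\e n}$, whose logarithm is $(\e n/t)(1+\ln t+\ln\e^{-1})-2\e n\le -\e n/2$ using $1+\ln t+\ln\e^{-1}\le 3t/2$. Second, and more seriously, in regime one the geometric sum has ratio at most $(et/\e)e^{-2t}\le e^{-t/2}$, so for $t$ near its smallest value $2\ln 2$ the sum is only bounded by roughly $2$ times the lead term, and the lead term itself is only $\exp(-0.27n)$ versus the target $\exp(-0.25n)$ when $\e=1/2$. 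Adding regime two's contribution, the total is on the order of $3\exp(-0.27n)$, which exceeds $\exp(-n/4)$ unless $n\gtrsim 55$. So your argument delivers the stated bound only for $n$ large, whereas the lemma asserts it for all $n$. The paper's MGF computation avoids this entirely: it produces $\exp((-\e+e^{-t})n)$ exactly, with no combinatorial prefactor and no dependence on $n$ being large. If you want to salvage the union-bound route you would need either a sharper tail than $(e/21)^k$ (the actual Chernoff bound has extra room) or to accept a smaller absolute constant in the exponent; as it stands, the exponential-moment argument is both shorter and tight.
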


To see the connection to our original problem, we will later choose the entries of $B$ to be 
the indicators of the large entries of $A$. 

\begin{proof}
Let $S_i$ denote the number of ones in the $i$-th row of $B$. 
Then $\E S_i = pn$. A standard application of Chernoff's inequality shows that
\begin{equation}         \label{eq: Si tails}
\Pr{ S_i > t } \le e^{-2t} \quad \text{for } t \ge 21pn.
\end{equation}
Let $K \ge 21pn$ be a number to be chosen later. (We will eventually choose $K$ as $21pn + 2\ln \e^{-1}$
as in the statement of the lemma.) Define the random variables
$$
X_i := S_i \ind_{\{S_i > K\}}.
$$
The quantity of interest is the total number of ones in the heavy rows, and it equals $\sum_{i=1}^n X_i$. 
To control this sum of independent random variables, we can use the standard 
Bernstein's trick (commonly called Chernoff's bound), where we use Markov's inequality 
after exponentiation. We obtain
\begin{equation}         \label{eq: sum Xi tail}
\Pr{ \sum_{i=1}^n X_i > \e n } 
\le e^{-\e n} \E  \exp \Big( \sum_{i=1}^n X_i \Big)
= \Big[ e^{-\e} \E e^{X_1} \Big]^n,
\end{equation}
where the last equality follows by independence and identical distribution. 
Now, by definition of $X_1$ we have
\begin{align*}
\E e^{X_1} 
&= \E e^{X_1} \ind_{\{X_1 = 0\}} + \E e^{X_1} \one_{\{X_1 \ne 0\}}
\le 1 + \E e^{S_1} \ind_{\{S_1 > K\}}\\
&= 1+ \int_{e^K}^\infty \Pr{ e^{S_1} > u } du \\
&= 1+ \int_{K}^\infty \Pr{ S_1 > t } e^t \, dt \quad \text{(by a change of variables)} \\
&\le 1 + \int_{K}^\infty e^{-2t} e^t \, dt \quad \text{(using \eqref{eq: Si tails} for $t \ge K \ge 21pn$)} \\
&= 1 + e^{-K} 
\le \exp(e^{-K}).
\end{align*}
Substituting this bound into \eqref{eq: sum Xi tail}, we conclude that 
$$
\Pr{ \sum_{i=1}^n X_i > \e n } 
\le \exp \left[ (-\e + e^{-K}) n \right] \le \exp(-\e n/2),
$$
if we choose $K$ so that $e^{-K} \le \e/2$. To finish the proof, recall that our argument works if 
$K$ satisfies the two conditions: $K \ge 21pn$ and $e^{-K} \le \e/2$. We thus choose 
$K := 21pn + 2\ln \e^{-1}$ and complete the proof. 
\end{proof}

\begin{corollary}[Bernoulli random matrix]				\label{cor: Bernoulli IJ removed}
  Let $B$ be an $n \times n$ random matrix whose entries are independent Bernoulli random 
  variables with mean $p$. Let $\e \in (0,1]$.
  Then with probability at least $1-2\exp(-\e n/4)$,
  there exists an $\e n \times \e n$ submatrix of $B$ such that replacing all of its entries with zero 
  leads to a matrix $\tilde{B}$ whose rows and columns have at most $21pn + 4 \ln \e^{-1}$ ones each.
\end{corollary}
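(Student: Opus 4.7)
The plan is to apply Lemma~\ref{lem: Bernoulli} twice, once to $B$ and once to $B^\tran$, each time with $\e/2$ in place of $\e$, and then to pack the offending entries into an $\e n \times \e n$ block. With threshold $K_r := 21pn + 2\ln(2/\e)$, which satisfies $K_r \le 21pn + 4\ln \e^{-1}$ for $\e \le 1/2$, the lemma asserts that with probability at least $1 - \exp(-\e n/4)$ the rows of $B$ with more than $K_r$ ones (the \emph{heavy} rows) contain at most $\e n/2$ ones in total, and the same statement for columns follows by applying the lemma to $B^\tran$. A union bound gives both events simultaneously with probability at least $1 - 2\exp(-\e n/4)$, matching the claimed probability.

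On this event, let $I_r$ be the set of heavy rows and $J_c$ the set of heavy columns, and let $J_r$ be the set of column indices appearing among the ones lying in heavy rows, with $I_c$ defined symmetrically from ones in heavy columns. Each of these four sets has cardinality at most $\e n/2$: for instance, $|J_r|$ is bounded by the total number of ones in heavy rows, while $|I_r|$ is bounded by that same total since each heavy row contains at least one one. Setting $I' := I_r \cup I_c$ and $J' := J_r \cup J_c$, padding up to exactly $\e n$ indices each if necessary, I obtain $|I'|, |J'| \le \e n$ as required.

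It then remains to check that $\tilde B$, obtained by zeroing out the submatrix $I' \times J'$, has at most $21pn + 4\ln \e^{-1}$ ones in every row and in every column. For a row $i \notin I_r$, row $i$ is non-heavy in $B$, so it has at most $K_r$ ones, and zeroing entries can only decrease this count. For $i \in I_r \subseteq I'$, every one $(i,j)$ of $B$ has $j \in J_r \subseteq J'$ by the very definition of $J_r$, so the entire row of $\tilde B$ at index $i$ is zero. The argument for columns is identical via $I_c$ and $J_c$. The main conceptual step, and the only nontrivial one, is the observation that the lemma's single one-count bound simultaneously controls the cardinality of the heavy rows \emph{and} the width of the column-support they collectively occupy; this double role is what makes it possible to absorb all bad entries into a square $\e n \times \e n$ submatrix rather than into a rectangular strip of width proportional to $n$.
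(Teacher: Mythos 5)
Your proposal is correct and follows essentially the same route as the paper: apply Lemma~\ref{lem: Bernoulli} to $B$ and $B^\tran$ at level $\e/2$, take a union bound, and observe that the at most $\e n$ ones lying in heavy rows or heavy columns can be enclosed in an $\e n \times \e n$ block whose deletion leaves every row and column light. Your write-up is somewhat more explicit than the paper's (the paper simply asserts the packing step), and you correctly flag the small parameter mismatch at $\e$ near $1$ that the paper's own statement shares, but the underlying argument is identical.
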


\begin{proof}
Apply Lemma~\ref{lem: Bernoulli} for $B$ and $B^\tran$ with $\e/2$ instead of $\e$, 
and take the intersection of the two good events. With the required probability, 
we obtain a set of $\e n$ bad entries of $B$ whose removal makes 
all rows and columns of $B$ contain at most $21pn + 2 \ln \e^{-1}$ ones.
It remains to note that these $\e n$ entries can be trivially placed in some $\e n \times \e n$ submatrix of $B$, and deletion of the whole $\e n \times \e n$ submatrix can only decrease the number of non-zero elements in the rows and columns of the residual part.
\end{proof}

\begin{remark}[Random graphs]
  It is not difficult to obtain a version of Corollary~\ref{cor: Bernoulli IJ removed} for symmetric random matrices. 
  This version can be interpreted as a statement about Erd\"os-R\'enyi random graphs $G(n,p)$, 
  with $B$ playing the role of the adjacency matrix. It states
  that with high probability, one can make all degrees of a $G(n,p)$ random graph bounded by $O(pn + \ln \e^{-1})$
  after removing the internal edges from a sub-graph with $\e n$ vertices.
\end{remark}

\subsection{Moderately large entries}
%------------

We will use Corollary~\ref{cor: Bernoulli IJ removed} to deduce Theorem~\ref{main} for matrices with moderately
large entries. Namely, we assume here that all entries of $A$ satisfy 
\begin{equation}         \label{eq: moderately large}
A_{ij} = 0 \quad \text{or} \quad \frac{\sqrt{n}}{2} \le |A_{ij}| \le \frac{5\sqrt{n}}{\sqrt{\e}}.
\end{equation}

\begin{proposition}[Moderately large entries]		\label{prop: moderate entries}
  Consider an $n \times n$ random matrix $A$ with i.i.d. entries which satisfy $\E A_{ij}^2 \le 1$ and 
  \eqref{eq: moderately large}. Let $\e \in (0,1/2]$.
  Then with probability at least $1-2\exp(-\e n/4)$, 
  there exists an $\e n \times \e n$ submatrix of $A$ such that replacing all of its entries with zero 
  leads to a well-bounded matrix $\tilde{A}$: 
  \begin{equation}         \label{eq: bounded entries}
  \|\tilde{A}\| \leq \frac{C \ln \e^{-1}}{\sqrt{\e}} \cdot \sqrt{n}.
  \end{equation}
\end{proposition}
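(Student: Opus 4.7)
The plan is to reduce the problem to the Bernoulli random matrix analysis already carried out in Corollary~\ref{cor: Bernoulli IJ removed}, and then finish via Schur's bound (Lemma~\ref{lem: norm L1 rows cols}).

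First I would observe that the assumption $\E A_{ij}^2 \le 1$ together with the lower bound $|A_{ij}| \ge \sqrt{n}/2$ on nonzero entries forces the probability of a nonzero entry to be small. Indeed, $\E A_{ij}^2 \ge (n/4)\,\Pr\{A_{ij}\ne 0\}$, so $p := \Pr\{A_{ij}\ne 0\}\le 4/n$. This lets me define $B := (\ind_{\{A_{ij}\ne 0\}})$, which is a Bernoulli random matrix with i.i.d.\ entries of mean $p \le 4/n$, exactly fitting the hypotheses of Corollary~\ref{cor: Bernoulli IJ removed}.

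Next, I would apply Corollary~\ref{cor: Bernoulli IJ removed} to $B$. With probability at least $1 - 2\exp(-\e n/4)$ there exists an $\e n \times \e n$ submatrix such that zeroing out the corresponding entries produces a matrix $\tilde{B}$ whose rows and columns each contain at most $21pn + 4\ln\e^{-1} \le 84 + 4\ln\e^{-1}$ ones. Let $\tilde{A}$ be the matrix obtained from $A$ by zeroing out the same submatrix; then every row and column of $\tilde{A}$ has at most $84 + 4\ln\e^{-1}$ nonzero entries, each of magnitude at most $5\sqrt{n}/\sqrt{\e}$ by the standing hypothesis \eqref{eq: moderately large}.

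Finally, I would invoke the Schur bound from Lemma~\ref{lem: norm L1 rows cols}. Each row (and column) satisfies
$$
\|\tilde{A}_i\|_1 \le (84 + 4\ln\e^{-1}) \cdot \frac{5\sqrt{n}}{\sqrt{\e}},
$$
and since $\e \in (0,1/2]$ gives $\ln\e^{-1}\ge \ln 2$, this is bounded by $C_1(\ln\e^{-1})\sqrt{n}/\sqrt{\e}$ for an absolute constant $C_1$. The same bound holds for $\|\tilde{A}^j\|_1$, so Schur's inequality yields $\|\tilde{A}\| \le C(\ln\e^{-1})\sqrt{n}/\sqrt{\e}$, which is exactly \eqref{eq: bounded entries}. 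There is no real obstacle here: the whole argument is essentially a bookkeeping step once Corollary~\ref{cor: Bernoulli IJ removed} is available. The only subtlety worth flagging is that the deterministic magnitude upper bound $5\sqrt{n}/\sqrt{\e}$ baked into the hypothesis \eqref{eq: moderately large} is precisely what is needed to convert a bound on the \emph{number} of nonzero entries per row into an $\ell_1$ bound, and this is why the range of admissible magnitudes was restricted in the statement.
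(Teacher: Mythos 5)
Your proof is correct and takes essentially the same route as the paper: define the Bernoulli indicator matrix of nonzero entries, bound its mean $p$ by $O(1/n)$ using Chebyshev and the lower magnitude bound from \eqref{eq: moderately large}, apply Corollary~\ref{cor: Bernoulli IJ removed} to control the number of nonzeros per row and column after removing an $\e n \times \e n$ submatrix, and conclude with the Schur bound (Lemma~\ref{lem: norm L1 rows cols}). The only cosmetic difference is your bound $p \le 4/n$ versus the paper's stated $p \le 2/n$; the constant is immaterial since it is absorbed into the $\lesssim \ln \e^{-1}$ anyway.
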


\begin{proof}
Consider the matrix $B$ whose elements are indicators of moderately large entries of $A$, 
i.e. 
$$
B_{ij} := \ind_{\{A_{ij} \ne 0\}}.
$$
Then $B_{ij}$ are i.i.d. Bernoulli random variables with mean 
\begin{equation}         \label{eq: p}
p := \E B_{ij} = \Pr{A_{ij} \ne 0} \le \Pr{|A_{ij}| \ge \frac{\sqrt{n}}{2} }
\le \frac{2}{n}.
\end{equation}
(In the last inequality, we used Chebyshev's inequality and the assumption $\E A_{ij}^2 \le 1$.)
Corollary~\ref{cor: Bernoulli IJ removed} applied to $B$ gives us an $\e n \times \e n$ submatrix of $A$ 
such that the number of non-zero elements in every row and column of $\tilde{A}$
(obtained by zeroing out the elements of $A$ outside that submatrix) is bounded by 
\begin{equation}         \label{eq: rows columns nonzero}
21pn + 4 \ln \e^{-1} \lesssim \ln \e^{-1},
\end{equation}
where we used \eqref{eq: p} in the last bound.

Moreover, assumption \eqref{eq: moderately large} shows that all entries of $\tilde{A}$ are bounded in absolute 
value by $5\sqrt{n} / \sqrt{\e}$. This and \eqref{eq: rows columns nonzero} imply that the $\ell_1$ norm of 
all rows $\tilde{A}_i$ and columns $\tilde{A}^j$ can be bounded as follows: 
$$
\max_{i,j} \left( \|\tilde{A}_i\|_1, \, \|\tilde{A}^j\|_1 \right) \lesssim \frac{\sqrt{n}}{\sqrt{\e}} \cdot \ln \e^{-1}.
$$
Applying Lemma~\ref{lem: norm L1 rows cols} leads to \eqref{eq: bounded entries}.
\end{proof}

\subsection{Very large entries}
%------------

Finally, we will need to prove Theorem~\ref{main} for very large entries -- now we assume that all entries of $A$ satisfy
\begin{equation}         \label{eq: very large}
A_{ij} = 0 \quad \text{or} \quad |A_{ij}| > \frac{5\sqrt{n}}{\sqrt{\e}}.
\end{equation}
There are typically very few such entries, as the following simple result shows. 

\begin{lemma}[Few very large entries]			\label{lem: very large}
  Consider an $n \times n$ random matrix $A$ with i.i.d. entries which satisfy $\E A_{ij}^2 \le 1$ and 
  \eqref{eq: very large}. Let $\e \in (0,1/2]$.
  Then with probability at least $1-\exp(-\e n)$, 
  the matrix $A$ has at most $\e n$ non-zero entries. 
\end{lemma}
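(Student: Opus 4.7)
The plan is to bound the total count of non-zero entries by a direct Chernoff (binomial tail) estimate, after first controlling the per-entry probability via Chebyshev's inequality.

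First I would note that under assumption \eqref{eq: very large}, the event $\{A_{ij}\ne 0\}$ coincides with the event $\{|A_{ij}| > 5\sqrt{n}/\sqrt{\e}\}$, i.e.\ with $\{A_{ij}^2 > 25n/\e\}$. By Chebyshev's inequality and the assumption $\E A_{ij}^2 \le 1$,
$$
p := \Pr{A_{ij}\ne 0} \le \frac{\E A_{ij}^2}{25n/\e} \le \frac{\e}{25n}.
$$
Thus the random variable
$$
N := \sum_{i,j=1}^n \ind_{\{A_{ij}\ne 0\}}
$$
is $\Binom(n^2,p)$-distributed with $\E N \le \e n/25$, because the entries of $A$ are i.i.d.

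Next I would apply the standard binomial tail estimate. For any integer $t$,
$$
\Pr{N \ge t} \le \binom{n^2}{t} p^t \le \Big( \frac{e n^2 p}{t} \Big)^t,
$$
using the inequality $\binom{n^2}{t} \le (en^2/t)^t$ that follows from Stirling's approximation. Taking $t := \lceil \e n \rceil$ and substituting the bound $p \le \e/(25n)$ yields
$$
\Pr{N \ge \e n} \le \Big( \frac{e n^2}{\e n} \cdot \frac{\e}{25n} \Big)^{\e n} = \Big( \frac{e}{25} \Big)^{\e n} \le \exp(-\e n),
$$
since $\ln(25/e) > 1$. This is exactly the claimed bound.

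I do not expect any real obstacle here: both steps (Chebyshev and Chernoff) are one-line applications, and the numerical constant $5$ in \eqref{eq: very large} was chosen precisely so that the product $e n^2 p / t$ is bounded by $e/25 < 1/e$, which delivers the $\exp(-\e n)$ tail with a clean constant. If one prefers not to use Stirling, the same conclusion follows from the standard multiplicative Chernoff bound applied to the Binomial sum $N$ with the multiplicative slack $\e n / \E N \ge 25$.
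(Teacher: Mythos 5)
Your proof is correct and follows essentially the same route as the paper's: Chebyshev's inequality to bound the per-entry probability by $\e/(25n)$, then a Chernoff/binomial-tail argument to control the total count. The paper leaves the Chernoff step as ``a standard application''; your union-bound form $\Pr{N\ge t}\le\binom{n^2}{t}p^t\le(en^2p/t)^t$ is a clean instantiation of it that gives exactly the stated $(e/25)^{\e n}\le e^{-\e n}$.
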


\begin{proof}
Using Chebyshev's inequality and the assumption that $\E A_{ij}^2 \le 1$, we see that 
the probability that a given entry is nonzero is
$$
\Pr{A_{ij} \ne 0} \le \Pr{|A_{ij}| > \frac{5\sqrt{n}}{\sqrt{\e}} } \le \frac{\e}{25 n}.
$$
Thus the expected number of non-zero entries in $A$ is at most $\e n / 25$.
A standard application of Chernoff's inequality (see e.g. \cite[Chapter~2]{V HDP}) gives 
$$
\Pr{ \text{$A$ has more than $\e n$ nonzero entries} } \le e^{-\e n}.
$$
The proof is complete.
\end{proof}

Since a set of $\e n$ indices can be always placed in an $\e n \times \e n$ submatrix,
we can state Lemma~\ref{lem: very large} as follows.

\begin{corollary}[Very large entries]			\label{cor: very large entries}
  Consider an $n \times n$ random matrix $A$ with i.i.d. entries which satisfy $\E A_{ij}^2 \le 1$ and 
  \eqref{eq: very large}. Let $\e \in (0,1/2]$.
  Then with probability at least $1-\exp(-\e n)$, all non-zero entries of $A$ are contained 
  in an $\e n \times \e n$ submatrix.
\end{corollary}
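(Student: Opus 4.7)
The corollary is essentially an immediate consequence of Lemma~\ref{lem: very large}, together with a trivial combinatorial observation. My plan is as follows.

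First, I invoke Lemma~\ref{lem: very large} to conclude that, with probability at least $1 - \exp(-\e n)$, the matrix $A$ has at most $\e n$ nonzero entries. Call this favorable event $\EE$ and work deterministically on it.

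Next, on the event $\EE$, let $I_0 \subset [n]$ be the set of row indices $i$ such that the $i$-th row of $A$ contains at least one nonzero entry, and let $J_0 \subset [n]$ be the analogous set of column indices. Since every nonzero entry contributes one element to $I_0$ and one element to $J_0$, both $|I_0|$ and $|J_0|$ are bounded by the total number of nonzero entries, hence $|I_0|, |J_0| \le \e n$. Pad $I_0$ and $J_0$ by arbitrary indices (if necessary) to obtain sets $I, J \subset [n]$ with $|I| = |J| = \e n$. Then every nonzero entry of $A$ lies in $I \times J$, so the complementary submatrix is identically zero, i.e., all nonzero entries sit inside the $\e n \times \e n$ submatrix $A_{I \times J}$.

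There is no real obstacle here; the only thing worth checking is that the rows and columns containing nonzero entries form at most $\e n$ indices (which is immediate from a counting of the nonzero support, and in particular does not require the rows/columns to be disjoint). This completes the plan.
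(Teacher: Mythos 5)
Your proposal is correct and is essentially the same as the paper's: the paper derives the corollary from Lemma~\ref{lem: very large} via the one-line observation that any set of $\e n$ entries can be placed in an $\e n \times \e n$ submatrix, and your argument just spells out that observation by bounding the number of rows and columns touched by the nonzero support.
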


\subsection{Proof of Theorem~\ref{main}}			\label{s: proof main}
%----------------

We are going to assemble Proposition~\ref{prop: bounded entries} for the bounded entries of $A$, Proposition~\ref{prop: moderate entries} for moderately large entries, 
and Corollary~\ref{cor: very large entries} for very large entries.
The $\e n \times \e n$ sub-matrices that appear in these results are possibly different.
The following simple lemma will help us to combine them into one.

\begin{lemma}			\label{lem: submatrix deletion}
  Let $B$ be a matrix. Zeroing out any submatrix of $B$ cannot increase the operator norm more than twice.
\end{lemma}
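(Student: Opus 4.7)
The plan is to exhibit $\tilde{B}$ as a sum of two matrices, each of which is obtained from $B$ by zeroing \emph{entire} rows and/or columns, and then invoke the elementary fact that such deletions do not increase the operator norm. Suppose the submatrix to be zeroed out is indexed by $I \times J$ for some $I \subset [m]$ and $J \subset [n]$. Let $P_I$ and $P_{I^c}$ denote the diagonal $0/1$ matrices on $\R^m$ that project onto the coordinates in $I$ and $I^c$ respectively, and let $Q_J$, $Q_{J^c}$ denote the analogous projections on $\R^n$. A direct check of the block structure will give the identity
$$
\tilde{B} \;=\; P_{I^c} B \;+\; P_I B Q_{J^c}.
$$
Indeed, the two summands have disjoint supports $I^c \times [n]$ and $I \times J^c$, whose union is exactly the complement of $I \times J$, so their sum reproduces $B$ outside the zeroed block and is $0$ inside it.

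The two terms are then bounded separately. Since a diagonal $0/1$ matrix is a contraction, one has $\|P_{I^c} B\| \le \|P_{I^c}\| \cdot \|B\| \le \|B\|$, and similarly $\|P_I B Q_{J^c}\| \le \|P_I\| \cdot \|B\| \cdot \|Q_{J^c}\| \le \|B\|$. The triangle inequality then yields
$$
\|\tilde{B}\| \;\le\; \|P_{I^c} B\| + \|P_I B Q_{J^c}\| \;\le\; 2\|B\|,
$$
as claimed. There is no real obstacle to overcome here: the only nontrivial step is to notice that the complement of a rectangular block $I \times J$ decomposes as a disjoint union of a row-slab $I^c \times [n]$ and a rectangular piece $I \times J^c$, so that $\tilde{B}$ splits into two pieces each of which is a simple coordinate-projection applied to $B$ (rather than some more delicate rank-$1$ perturbation of $B$).
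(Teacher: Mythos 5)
Your proof is correct and follows essentially the same route as the paper: you decompose the complement of $I \times J$ into the disjoint pieces $I^c \times [n]$ and $I \times J^c$, apply the triangle inequality, and observe that each piece is obtained from $B$ by zeroing rows and/or columns, which cannot increase the operator norm. Writing the two pieces as $P_{I^c}B$ and $P_I B Q_{J^c}$ is just a slightly more explicit rendering of the same decomposition.
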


\begin{proof}
Let $\tilde{B}$ denotes the matrix obtained by zeroing out a submatrix of $B$ spanned by the index set $I \times J$. Triangle inequality gives
$$
 \|\tilde{B}\| \le \|B_{I \times J^c}\| + \|B_{I^c \times [n]}\| \le \|B\| + \|B\|.
$$
The last inequality follows from the fact that zeroing out any subset of rows or columns cannot increase the operator norm. 
\end{proof}

\begin{remark}
  One may wonder if zeroing out a submatrix can increase the norm at all.
  Basic simulations show that it actually can.
\end{remark}

\begin{proof}[Proof of Theorem~\ref{main}]
Decompose $A$ into a sum of three $n \times n$ matrices with disjoint support,
\begin{equation}\label{eq: decomp1}
A = B + M + L,
\end{equation}
where $B$ contains bounded entries of $A$ -- those that satisfy $|A_{ij}| \le \sqrt{n}/2$,
the matrix $M$ contains moderately large entries -- those for which $\sqrt{n}/2 < |A_{ij}| \le 5\sqrt{n/\e}$,
and $L$ contains large entries -- those satisfying $|A_{ij}| > 5\sqrt{n/\e}$.

To bound $B$, let us subtract the mean and first bound 
$$
G := B - \E B.
$$
The entries of this matrix have zero mean and satisfy
$$
\E G_{ij}^2 = \Var(B_{ij}) \le \E B_{ij}^2 \le \E A_{ij}^2 = 1 
$$
(where we used the moment assumption) and 
$$
\|G_{ij}\|_\infty = \|B_{ij} - \E B_{ij}\|_\infty
\le 2 \|B_{ij}\|_\infty \le \sqrt{n}.
$$
Thus we can apply Proposition~\ref{prop: bounded entries} for $0.5 G$. It says that 
with probability at least $1 - 4\exp(- \e n/2)$, the removal of a certain 
$\e n \times \e n$ submatrix of $G$ leads to a well-bounded matrix $\tilde{G}$, i.e. 
$$
\|\tilde{G}\| \lesssim \sqrt{\frac{\ln \e^{-1}}{\e}} \cdot \sqrt{n}.
$$

Next, we bound $\E B$, a matrix whose entries are the same. Thus
\begin{align*}
\|\E B\| 
  &= n \big|\E B_{ij}\big|
  = n \big|\E (A_{ij} - B_{ij})\big|		\quad \text{(since $\E A_{ij} = 0$)} \nonumber\\
  &= n \big|\E A_{ij} \ind_{|A_{ij}|>\sqrt{n}/2} \big|	\quad \text{(by definition of $A_1$)} \nonumber\\
  &\le n \big( \E A_{ij}^2 \big)^{1/2} \big( \Pr{|A_{ij}|>\sqrt{n}/2} \big)^{1/2} 
  	\quad \text{(by Cauchy-Schwarz)} \nonumber\\
 &\le n \cdot 1 \cdot 2/\sqrt{n} \le 2\sqrt{n}  \quad \text{(by Chebyshev's inequality)}.
\end{align*}

To bound $M$ and $L$, note that 
$$
\E M^2_{ij} \le \E A_{ij}^2 = 1 \text{ and } \E L^2_{ij} \le \E A_{ij}^2 = 1.
$$
Thus, Proposition~\ref{prop: moderate entries} can be applied to $M$: with probability $1-\exp(-\e n/4)$ there exist a $\e n \times \e n$ submatrix of $M$, the removal of which leads to a matrix $\tilde{M}$, such that
$$
  \|\tilde{M}\| \leq \frac{C \ln \e^{-1}}{\sqrt{\e}} \cdot \sqrt{n}.
$$
Proposition~\ref{cor: very large entries} can be applied to $L$. We can interpret its conclusion as follows: with probability $1-2\exp(-\e n)$ there exist a $\e n \times \e n$ submatrix of $L$, the removal of which leads to an identically zero matrix $\tilde{L}$, i.e.
$$\|\tilde{L}\| = 0.$$

Now, let us embed the three $\e n \times \e n$ submatrices of $A$ we just constructed into one $3\e n \times 3\e n$ submatrix and zero out this whole submatrix. By Lemma~\ref{lem: submatrix deletion}, the norms of $\tilde{G}$, $\E B$, $\tilde{M}$ and $\tilde{L}$ will not increase more than twice as a result of this operation. Taking intersection of the three good events, we conclude that with the probability at least
$$
1 - 4\exp(- \e n/2) - 2\exp(-\e n/4) - \exp(-\e n)
 \ge 1 - 7 \exp(-\e n /4)
$$
there exists an $3 \e n \times 3 \e n$ submatrix of $A$ such that replacing all of its entries by zero 
leads to a well-bounded matrix $\tilde{A}$:
$$
\|\tilde{A}\| \lesssim \sqrt{\frac{\ln \e^{-1}}{\e}} \cdot \sqrt{n} 
	+ 2\sqrt{n} + \frac{\ln \e^{-1}}{\sqrt{\e}} \cdot \sqrt{n} + 0
\lesssim \frac{\ln \e^{-1}}{\sqrt{\e}} \cdot \sqrt{n}. 
$$
This proves the conclusion of Theorem~\ref{main} 
with $3\e$ instead of $\e$, where $\e \in (0, 1/2]$ is arbitrary. 
By rescaling, Theorem~\ref{main} holds also as originally stated. This concludes the proof of Theorem~\ref{main}.
\end{proof}

\section{Global problem: proof of Theorem~\ref{global}} \label{s: global}
%================

In this section we prove Theorem~\ref{global}, which states that either nonzero mean or infinite second moment 
make it impossible to repair the matrix norm by removing a small submatrix. 
We will first prove a non-asymptotic version version of this result. Once this is done,
an application of Borel-Cantelli Lemma will quickly yield Theorem~\ref{global}.

\begin{proposition}[Global problem: non-asymptotic regime] \label{prop: global}
  Consider an $n \times n$ random matrix $A$ whose entries are i.i.d.   
  random variables that have either nonzero mean or infinite second moment, and let $\e \in (0,1)$.   
  Then, for any $M > 0$ there exists $n_0$ that may depend only on $\e$, $M$ and the distribution of the entries, 
  and such that for any $n > n_0$ 
  the following event holds with probability at least $1-e^{-n}$:
  every $(1-\e)n \times (1-\e)n$ submatrix $A'$ of $A$ satisfies
  $$
  \|A'\| \ge M\sqrt{n}.
  $$
\end{proposition}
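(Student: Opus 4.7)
The plan is to handle the two hypotheses (nonzero mean; infinite second moment) separately, in each case reducing the combinatorial ``for every submatrix'' to a uniform concentration inequality for sums of i.i.d.\ variables by means of a union bound over the at most $\binom{n}{\lfloor \e n \rfloor}^2 \le \exp(c_\e n)$ choices of the removed index sets $I^c, J^c$. The two operator-norm lower bounds I plan to exploit are the Frobenius bound $\|A'\| \ge \|A'\|_F/\sqrt{\min(|I|,|J|)}$ and the rank-one bound $\|A'\| \ge |\one_I^\tran A\, \one_J|/\sqrt{|I|\,|J|}$, where $I, J$ index the rows and columns retained in $A'$.

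In the \emph{infinite second moment} case I would invoke the Frobenius bound, reducing the goal to the uniform lower bound $\sum_{(i,j)\in I\times J} A_{ij}^2 \ge M^2 n^2$. Choose a threshold $T = T(\e, M, \mathrm{dist})$ large enough that $\E(A_{11}^2 \wedge T) \ge 10\, M^2/(1-\e)^2$; this is possible by monotone convergence since $\E A_{11}^2 = \infty$. The variables $B_{ij} := A_{ij}^2 \wedge T$ are i.i.d., nonnegative, and bounded by $T$, so Hoeffding's inequality yields, for each fixed admissible $(I,J)$,
\[
\P\Bigl(\sum_{(i,j)\in I\times J} B_{ij} < \tfrac{1}{2} |I|\,|J|\, \E B_{11}\Bigr) \le 2\exp\bigl(-c(\e,M,T)\, n^2\bigr).
\]
A union bound over the $\le \exp(c_\e n)$ admissible submatrices, combined with the pointwise inequality $A_{ij}^2 \ge B_{ij}$, produces $\|A'\|_F^2 \ge M^2 n^2$ for every $A'$ with failure probability at most $e^{-n}$ once $n \ge n_0(\e, M, \mathrm{dist})$, and hence $\|A'\| \ge M\sqrt n$.

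In the \emph{nonzero mean} case, where $\mu := \E A_{11}$ is a well-defined nonzero real (so in particular $\E|A_{11}|<\infty$), I would use the rank-one bound, reducing the target to $|\sum_{(i,j)\in I\times J} A_{ij}| \ge M(1-\e)\, n^{3/2}$. The expectation of this sum is $|I||J|\mu$ of order $n^2|\mu|$, which outruns the target by a factor of order $\sqrt n$; one can therefore afford fluctuations comparable to a constant fraction of the mean. Truncate $A_{ij} = B_{ij} + C_{ij}$ at a level $T$ for which $|\E B_{11}| \ge 3|\mu|/4$ and $\E|C_{11}|$ is much smaller than $|\mu|$. Bernstein's inequality applied to the bounded i.i.d.\ summands $B_{ij}$, together with a union bound over submatrices, gives $|\sum_{I\times J} B_{ij} - |I||J|\,\E B_{11}| \le |I||J||\mu|/8$ simultaneously for all $(I,J)$ with failure probability at most $e^{-n}$. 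The tail I would handle via the uniform bound $|\sum_{I\times J} C_{ij}| \le \sum_{i,j} |C_{ij}|$, whose right-hand side does not depend on $(I,J)$; a single concentration estimate on $\sum_{ij}|C_{ij}|$ then implies $|\sum_{I\times J} A_{ij}| \ge |I||J||\mu|/2 \gg M\, n^{3/2}$ for $n$ large.

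The main technical obstacle lies in obtaining $e^{-n}$-probability control on $\sum_{ij}|C_{ij}|$ when only a finite first (or second) moment on $|A_{11}|$ is assumed. I plan to address it by a secondary truncation of $|C_{ij}|$ at a scale growing mildly with $n$: Bernstein then applies to the bounded part with exponent $\gtrsim n$, while the count of entries for which $|A_{ij}|$ exceeds the secondary threshold is a sum of Bernoulli indicators whose mean is tiny and which Chernoff controls exponentially (analogous to the ``very large entries'' step in Section~\ref{s: large entries}). The identical-distribution hypothesis and the freedom to let $n_0$ depend on the distribution allow the two truncation thresholds to be calibrated so that the bounded and the residual contributions each stay below $|I||J||\mu|/8$ with failure probability $e^{-n}$; Theorem~\ref{global} will then follow by Borel--Cantelli applied across a countable sequence of values of $M \in \N$.
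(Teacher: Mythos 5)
Your infinite--second--moment branch matches the paper's argument essentially verbatim: truncate $A_{ij}^2$ at a level calibrated so that the truncated second moment exceeds a fixed multiple of $M^2$, invoke the Frobenius lower bound $\|A'\|\ge\|A'\|_F/\sqrt{\mathrm{rank}}$, apply Hoeffding to the bounded truncated squares (exponent $\sim n^2$), and union-bound over the $\binom{n}{m}^2 \le \exp(c_\e n)$ admissible submatrices. That part is fine.

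The nonzero--mean branch has a genuine gap. You try to run the argument under the sole assumption $\E|A_{11}|<\infty$ and propose to control the tail $\sum_{i,j}|C_{ij}|$ by a secondary truncation plus a Chernoff bound on the \emph{number} of entries exceeding the secondary threshold. But a bound on the count of very large entries does not bound their \emph{total contribution} to $\sum|C_{ij}|$: even a single entry whose magnitude is, say, $n^{100}$ would wreck the estimate, and no moment assumption of order $\le 2$ gives an $e^{-n}$ bound on $\max_{i,j}|A_{ij}|$. So the claim ``the bounded and the residual contributions each stay below $|I||J||\mu|/8$ with failure probability $e^{-n}$'' is not justified by the two ingredients you cite, and I do not see how to rescue it along those lines. (You also overlook a simplification the paper uses: in the nonzero-mean branch one may assume WLOG that $\E A_{11}^2 < \infty$, since the infinite-second-moment branch already covers the complementary case. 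But even this extra assumption does not by itself close the gap above.)

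The paper avoids any tail control via a short but crucial observation you are missing (Lemma~\ref{lem: without boundedness}). For a fixed $m\times m$ submatrix $B$, the failure event $\{\|B\| < M\sqrt m\}$ is \emph{contained} in the event $\{\max_{i,j}|B_{ij}| < M\sqrt m\}$, because $\|B\|\ge\max_{i,j}|B_{ij}|$ always. Hence truncating the entries at level $M\sqrt m$ is a no-op on the failure event, giving the identity $\Pr{\|B\| < M\sqrt m} = \Pr{\|\bar B\| < M\sqrt m}$ with $\bar B$ the truncated matrix, to which the bounded-entry Bernstein argument (your rank-one bound $\|B\|\ge m^{-1}\sum_{i,j}B_{ij}$, which is the same as the paper's) applies directly. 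This sidesteps the entire question of controlling $\sum_{i,j}|C_{ij}|$ and is the piece you would need to make your nonzero-mean branch go through.
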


Before we prove this proposition, let us pause to see its connection to the matrix $\tilde{A}_n$ of Theorem~\ref{global}. 
Proposition~\ref{prop: global} yields that this matrix satisfies
$$
\|\tilde{A}_n\| \ge M \sqrt{n}.
$$
Indeed, modifying an $\e n \times \e n$ submatrix always leaves some $(1-\e) n \times (1-\e) n$ submatrix $A'$ intact, 
so we can apply Proposition~\ref{prop: global} for that submatrix.

\subsection{Infinite second moment} \label{s: inf var}
%-------------------------

Here we will prove the part of Proposition~\ref{prop: global} about infinite second moment; 
the case of nonzero mean will be treated in Section~\ref{s: nonzero mean}. 
Let us start with the following lemma which will help us treat a fixed submatrix.

\begin{lemma}			\label{lem: fixed sub-matrix}
  Consider an $m \times m$ random matrix $B$ whose entries are i.i.d.
  random variables with infinite second moment.
  Then, for any $M > 0$ there exists $m_0$ that may depend only on $M$ and the distribution of the entries, 
  and such that for any $m > m_0$ we have
  $$
  \|B\| \ge M\sqrt{m}
  $$
  with probability at least $1-\exp(-M^2 m)$.
\end{lemma}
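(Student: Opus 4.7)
The plan is to bound $\|B\|$ from below by $\max_i \|B_i\|_2$ and then exploit the fact that the $m$ rows of $B$ are independent. Since infinite second moment rules out entrywise tail lower bounds of Chebyshev type, I would combine a truncation step with Hoeffding's inequality on a single row, then amplify via independence across rows to extract the desired $e^{-M^2 m}$ probability bound.

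Concretely, because $\E X^2 = \infty$ for an entry $X$, monotone convergence lets us choose a threshold $N = N(M)$, depending only on $M$ and the distribution, such that $\mu := \E[\min(X^2,N)] \ge 2M^2$. For a fixed row $i$, let $Y_{ij} := \min(B_{ij}^2, N) \in [0,N]$; these are i.i.d.\ with mean $\mu$, and $\sum_j Y_{ij} \le \|B_i\|_2^2$. Applying Hoeffding's inequality to the bounded i.i.d.\ sum yields
$$
\P\bigl(\|B_i\|_2^2 < M^2 m\bigr) \;\le\; \P\Bigl(\sum_{j=1}^m Y_{ij} < m\mu - M^2 m\Bigr) \;\le\; \exp\!\Bigl(-\tfrac{2 M^4 m}{N^2}\Bigr).
$$
Choosing $m_0 := N^2/(2M^2)$ guarantees that for every $m \ge m_0$ the per-row failure probability is at most $e^{-M^2}$.

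Finally, since the rows of $B$ are mutually independent (the entries are i.i.d.), the probability that every row has norm below $M\sqrt{m}$ is at most $(e^{-M^2})^m = e^{-M^2 m}$, and on the complementary event $\|B\| \ge \max_i \|B_i\|_2 \ge M\sqrt{m}$. The only real obstacle is the coordinated choice of constants: one must pick $N$ first (so that $\mu \ge 2M^2$), then $m_0$ (so that Hoeffding's bound per row drops below $e^{-M^2}$), and check that both are functions of $M$ and the law of $X$ alone. Once this bookkeeping is done, the argument is routine; the infinite second moment enters only through the initial statement that $\E[\min(X^2,N)]$ can be made arbitrarily large.
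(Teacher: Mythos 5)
Your proof is correct, and it uses the same two core ingredients as the paper's (truncation via monotone convergence so that the truncated second moment exceeds $2M^2$, then Hoeffding's inequality for the bounded truncated squares), but it organizes them differently. The paper lower-bounds $\|B\|$ by $\|B\|_F/\sqrt{m}$ and applies Hoeffding once to all $m^2$ truncated entries, obtaining a tail of the form $\exp(-cM^4 m^2/K^4)$, which for $m$ large enough dominates $\exp(-M^2 m)$. You instead lower-bound $\|B\|$ by $\max_i \|B_i\|_2$, apply Hoeffding to a single row (getting a per-row failure probability $\le e^{-M^2}$ once $m \ge N^2/(2M^2)$), and then use independence of the rows to multiply the per-row failure probabilities, yielding $e^{-M^2 m}$ directly. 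The two routes are essentially interchangeable here: the paper's version concentrates a single sum of $m^2$ terms and gets a quadratic-in-$m$ exponent for free, while yours makes the exponential-in-$m$ gain explicit via the product structure of the rows. One small caveat worth keeping in mind: your argument crucially uses that the rows are independent, which the i.i.d.\ assumption supplies, whereas the Frobenius-norm route would also work under the weaker hypothesis of independent (not necessarily identically distributed) entries with a uniform lower bound on the truncated second moments. For this lemma as stated, both work equally well.
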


\begin{proof}
By assumption, we have $\E B_{ij}^2 = \infty$. Therefore, for any $M > 0$ one can find a truncation level $K$ 
that depends only on $M$ and the distribution, and such that the truncated random variables
\begin{equation}         \label{eq: Bij truncated}
\bar{B}_{ij} := B_{ij} \ind_{|B_{ij}| \le K}
\quad \text{satisfy} \quad
\E \bar{B}_{ij}^2 \ge 2 M^2.
\end{equation}
(This follows easily from Lebesgue's monotone convergence theorem.)

Consider the matrix $\bar{B}$ with entries $\bar{B}_{ij}$. We have
$$
\|B\| \ge \frac{1}{\sqrt{m}} \|B\|_F \ge \frac{1}{\sqrt{m}} \|\bar{B}\|_F.
$$
Then we bound the failure probability as follows:
\begin{align*}
\Pr{\|B\| < M \sqrt{m}} 
  &\le \Pr{\|\bar{B}\|_F < Mm} 
  = \Pr{ \sum_{i,j=1}^m \bar{B}_{ij}^2 < M^2 m^2 } \\
  &\le \Pr{ \sum_{i,j=1}^m (\bar{B}_{ij}^2 - \E \bar{B}_{ij}^2) < -M^2 m^2 } 
\end{align*}
where we used \eqref{eq: Bij truncated} in the last step.

Apply Hoeffding's inequality for the random variables $\bar{B}_{ij}^2$ and use that they are
bounded by $K^2$ by construction. The probability above gets bounded by 
$$
\exp \Big( -\frac{M^4 m^2}{2K^2} \Big).
$$
If $m > 2K^2/M^2 = m_0$, this probability can be further bounded by $\exp(-M^2 m)$,
as claimed. 
\end{proof}

\smallskip

\begin{proof}[Proof of Proposition~\ref{prop: global} for infinite second moment]
We can assume without loss of generality that $M$ is large enough depending on $\e$. 
(Indeed, once the conclusion of the proposition holds for one value of $M$ it automatically holds for all smaller values.)

Apply Lemma~\ref{lem: fixed sub-matrix} for an $m \times m$ matrix $A'_n$ with $m = (1-\e)n$, 
and then take a union bound over all $\binom{n}{m}^2$ possible choices of such submatrices. 
It follows that the conclusion of Proposition~\ref{prop: global} holds with probability at least
$$
1 - \binom{n}{m}^2 \exp(-M^2 m).
$$
By Stirling's approximation, we have $\binom{n}{m} \le (en/m)^m$. Using this and substituting
$m=(1-\e)n$, we bound the probability below by 
$$
1 - \exp \Big[ \Big( 2\log \frac{e}{1-\e} - M^2 \Big) (1-\e) n \Big].
$$
If the value of $M$ is sufficiently large depending on $\e$, this probability is larger than
$1-\exp(-n)$, as claimed. Proposition~\ref{prop: global} for infinite second moment is proved.
\end{proof}

\subsection{Nonzero mean}  \label{s: nonzero mean}
%-------------------------

Now we will prove the part of Proposition~\ref{prop: global} about nonzero mean. 
We can assume here that the second moment of the entries $A_{ij}$ is finite, 
as the opposite case was treated in Section~\ref{s: inf var}.
As before, we will first focus on one submatrix. 
In the following lemma we make an extra boundedness assumption, 
which we will get rid of using truncation later. 

\begin{lemma}		\label{lem: under boundedness}
  Consider an $m \times m$ random matrix $B$ whose entries are i.i.d. random variables 
  that satisfy
  $$
  \E B_{ij} = \mu > 0, \quad \E B_{ij}^2 \le \s^2, \quad |B_{ij}| \le K \sqrt{m} \text{ a.s.}
  $$
  Then, for any $M > 0$ there exists $m_0$ that may depend only on $\mu$, $\sigma$, $K$ and $M$,
  and such that for any $m > m_0$ we have
  $$
  \|B\| \ge \frac{\mu m}{2}
  $$
  with probability at least $1-\exp(-M^2 m)$.
\end{lemma}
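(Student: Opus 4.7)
The plan is to lower-bound $\|B\|$ by testing the quadratic form on the all-ones vector and then apply a Bernstein-type concentration inequality to the resulting sum of $m^2$ i.i.d. bounded random variables. Concretely, since $\|\one\|_2 = \sqrt{m}$, the variational characterization of the operator norm gives
\[
\|B\| \;\ge\; \frac{|\ip{\one}{B\one}|}{\|\one\|_2^2} \;=\; \frac{|T|}{m},
\qquad \text{where} \qquad T := \sum_{i,j=1}^m B_{ij}.
\]
Thus it suffices to show that, with probability at least $1-\exp(-M^2 m)$,
\[
T \;\ge\; \frac{\mu m^2}{2}.
\]

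By linearity, $\E T = \mu m^2$, and the goal reduces to showing that the centered sum $T - \E T$ does not drop below $-\mu m^2/2$. The $m^2$ summands $B_{ij} - \mu$ are i.i.d., bounded in absolute value by $2K\sqrt{m}$, and each has variance at most $\sigma^2$. Bernstein's inequality therefore yields
\[
\Pr{\, T - \E T \le -\tfrac{\mu m^2}{2}\,}
\;\le\; \exp\!\left( -\frac{(\mu m^2/2)^2/2}{m^2 \sigma^2 + \tfrac{2K\sqrt{m}}{3}\cdot \tfrac{\mu m^2}{2}}\right)
\;=\; \exp\!\left( -\frac{\mu^2 m^4/8}{m^2 \sigma^2 + K\mu m^{5/2}/3}\right).
\]
For $m$ large enough (depending only on $\mu, \sigma, K$), the $m^{5/2}$ term dominates the denominator, so the exponent is of order $m^{3/2}$, and in particular exceeds $M^2 m$ once $m > m_0$ for an $m_0$ depending on $\mu, \sigma, K, M$. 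This gives the required failure probability $\exp(-M^2 m)$.

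The argument is essentially routine once one notices the right test vector: the key point is that while each entry is allowed to be as large as $K\sqrt{m}$, the sum of $m^2$ centered copies fluctuates on the much smaller scale $K\sqrt{m}\cdot m = Km^{3/2}$, which is dwarfed by the mean $\mu m^2$. The only mild subtlety is bookkeeping in the Bernstein denominator to confirm that the probabilistic bound we get is in fact stronger than $\exp(-M^2 m)$ for all sufficiently large $m$; there is no real obstacle. I would not expect to need any of the more delicate machinery from earlier sections (damping, Grothendieck-Pietsch, etc.) for this lemma, since the nonzero mean already forces a macroscopic signal in the $(\one,\one)$ entry of the quadratic form.
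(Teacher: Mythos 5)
Your proof is correct and follows essentially the same route as the paper: lower bound $\|B\|$ by the quadratic form on the normalized all-ones vector, reduce to showing $\sum_{i,j} B_{ij} \ge \mu m^2/2$, and apply Bernstein's inequality to the centered i.i.d. bounded summands. (In fact your Bernstein denominator $m^2\sigma^2 + K\mu m^{5/2}/3$ is the careful version; the paper's displayed denominator $\sigma^2 m^2 + K\sqrt{m}/3$ appears to have dropped the factor of $t=\mu m^2/2$, though this does not affect the conclusion since in both cases the exponent grows super-linearly in $m$.)
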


\begin{proof}
Notice that
$$
\|B\| \ge \frac{1}{m} \sum_{i,j=1}^m B_{ij}.
$$
(To check this inequality, recall that $\|B\| \ge x^\tran B x$ for any unit vector $x$; use this for the vector $x$
whose all coordinates equal $1/\sqrt{m}$.)
Then we can bound the failure probability as follows:
$$
\Pr{ \|B\| < \frac{\mu m}{2} } 
  \le \Pr{ \sum_{i,j=1}^m B_{ij} < \frac{\mu m^2}{2} } 
  \le \Pr{ \sum_{i,j=1}^m (B_{ij} - \E B_{ij}) < - \frac{\mu m^2}{2} } 
$$
where we used that $\E B_{ij} = \mu$ in the last step.

Apply Bernstein's inequality for the random variables $B_{ij}$ and use that they 
have variance at most $\s^2$ and are bounded by $K\sqrt{m}$ by assumption. 
The failure probability gets bounded by 
$$
\exp \Big( - \frac{\mu^2 m^4 / 8}{\s^2 m^2 + K\sqrt{m}/3} \Big).
$$
If $m$ is large enough depending $\mu$, $\s$, $K$ and $M$, then this probability 
can be further bounded by $\exp(-M^2 m)$, as claimed. 
\end{proof}

Next, we will use truncation to get rid of the boundedness assumption in Lemma~\ref{lem: under boundedness}
and thus prove the following. 

\begin{lemma}		\label{lem: without boundedness}
  Consider an $m \times m$ random matrix $B$ whose entries are i.i.d. random variables 
  that satisfy
  $$
  \E B_{ij} = \mu > 0, \quad \E B_{ij}^2 \le \s^2.
  $$
  Then, for any $M > 0$ there exists $m_0$ that may depend only on $\mu$, $\sigma$, $K$, $M$
  and the distribution of the entries, 
  and such that for any $m > m_0$ we have
  \begin{equation}         \label{eq: without boundedness}
  \|B\| \ge M \sqrt{m}
  \end{equation}
  with probability at least $1-\exp(-M^2 m)$.
\end{lemma}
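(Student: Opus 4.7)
The plan is to reduce to the bounded case treated in Lemma~\ref{lem: under boundedness} by truncating $B$ at the level $M\sqrt{m}$. The key observation is that the target bound $\|B\| \ge M\sqrt{m}$ is much weaker than the bound $\|B\| \ge \mu m/2$ produced by Lemma~\ref{lem: under boundedness}, so there is ample slack to absorb whatever error the truncation introduces.

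First, I would introduce the matrix $\bar B$ with i.i.d.\ entries $\bar B_{ij} := B_{ij} \ind_{|B_{ij}| \le M\sqrt{m}}$. By construction $|\bar B_{ij}| \le M\sqrt{m}$ and $\E \bar B_{ij}^2 \le \E B_{ij}^2 \le \s^2$. Moreover, since $|B_{ij}|$ is integrable, dominated convergence gives
$$
\E \bar B_{ij} = \mu - \E\bigl[B_{ij}\,\ind_{|B_{ij}| > M\sqrt{m}}\bigr] \longrightarrow \mu
$$
as $m \to \infty$, so for $m$ beyond some threshold $m_1$ depending on $\mu$, $M$ and the distribution of $B_{11}$ we have $\E \bar B_{ij} \ge \mu/2$.

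With these properties in place, Lemma~\ref{lem: under boundedness} applies to $\bar B$ with parameters $\mu/2$, $\s$, $K = M$, and yields
$$
\|\bar B\| \ge \frac{\mu m}{4} \ge M\sqrt{m}
$$
with probability at least $1 - \exp(-M^2 m)$, provided $m$ exceeds both the threshold supplied by that lemma and $(4M/\mu)^2$.

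Finally, I would close the argument using the trivial entrywise domination $\|B\| \ge \max_{i,j}|B_{ij}|$: this forces the event $\{\|B\| < M\sqrt{m}\}$ to imply that every entry of $B$ lies strictly below the truncation level, so on that event $\bar B = B$ and hence $\|\bar B\| = \|B\|$. Consequently $\{\|B\| < M\sqrt{m}\} \subseteq \{\|\bar B\| < M\sqrt{m}\}$, and the preceding probability estimate for $\bar B$ transfers to $B$ and concludes the proof. No serious obstacle appears; the entire argument is driven by the gap between the weak target $M\sqrt{m}$ and the stronger bound $\mu m/2$ available for bounded entries, which is precisely what renders the heavy tails of $B$ harmless.
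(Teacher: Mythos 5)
Your proposal is correct and follows essentially the same route as the paper's proof: truncate the entries at level $M\sqrt{m}$, use convergence of the truncated mean to reduce to Lemma~\ref{lem: under boundedness} with parameters $(\mu/2, \sigma, K=M)$, and then invoke the entrywise bound $\|B\| \ge \max_{i,j}|B_{ij}|$ to argue that the failure event forces $\bar B = B$, transferring the probability estimate. The only cosmetic difference is that you cite dominated convergence for $\E \bar B_{ij} \to \mu$ while the paper cites monotone convergence; yours is actually the more directly applicable tool since $B_{ij}\ind_{|B_{ij}|\le M\sqrt{m}}$ need not be monotone in $m$ when $B_{ij}$ can be negative.
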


\begin{proof}
Choosing $m_0$ large enough depending on $M$ and the distribution of $B_{ij}$, 
we can make sure that for any $m \ge m_0$ the truncated random variables
$$
\bar{B}_{ij} := B_{ij} \ind_{|B_{ij}| \le M \sqrt{m}}
\quad \text{satisfy} \quad
\E \bar{B}_{ij} \ge \E B_{ij} - \frac{\mu}{2} = \frac{\mu}{2}.
$$
(This follows easily from Lebesgue's monotone convergence theorem.)

Let us consider the event that all entries of $B$ are appropriately bounded: 
$$
\EE := \left\{ |B_{ij}| \le M \sqrt{m} \text{ for all } i,j \in [n] \right\}.
$$
Suppose for a moment that \eqref{eq: without boundedness} fails, so we have $\|B\| < M \sqrt{m}$.
Since the inequality $\|B\| \ge \max_{i,j} |B_{ij}|$ is always true, the event $\EE$ must hold in this case.
This in turn implies that the truncation has no effect on the entries, i.e. $\bar{B}_{ij} = B_{ij}$ for all $i,j$.

We have shown that in the event of the failure of \eqref{eq: without boundedness}, 
we may automatically assume that the entries of $B$ are appropriately bounded.
Therefore the failure probability satisfies
$$
\Pr{\|B\| < M \sqrt{m}} = \Pr{\|\bar{B}\| < M \sqrt{m}}
$$
where $\bar{B}$ denotes the matrix with the truncated entries $\bar{B}_{ij}$.
It remains to apply Lemma~\ref{lem: under boundedness} for the random matrix $\bar{B}$, 
noting that truncation may only decrease the second moment. 
The failure probability gets bounded by $\exp(-M^2 m)$, as claimed. 
\end{proof}

\smallskip

\begin{proof}[Proof of Proposition~\ref{prop: global} for non-zero mean]
As we mentioned in the beginning of this section, we can assume that the entries $B_{ij}$
have finite second moment $\s^2$. Then the conclusion of the proposition 
follows by exact same union bound argument as in the end of Section~\ref{s: inf var}
(just use Lemma~\ref{lem: without boundedness} instead of Lemma~\ref{lem: fixed sub-matrix} there.)
\end{proof}

\subsection{Proof of Theorem~\ref{global}}
%-------------------------

We will prove a stronger fact that
\begin{equation}         \label{eq: A'n}
\min \frac{\|A'_n\|}{\sqrt{n}} \to \infty \quad \text{as} \quad n \to \infty \quad \text{almost surely},
\end{equation}
where the minimum is taken over all $(1-\e) n \times (1-\e) n$ submatrices $A'_n$ of $A_n$. 
As we mentioned below Proposition~\ref{prop: global}, this would imply the conclusion of Theorem~\ref{global},
since modifying an $\e n \times \e n$ submatrix leaves some $(1-\e) n \times (1-\e) n$ sub-matrix intact.

Fix any $M > 0$ and consider the events
$$
\EE_n := \Big\{ \min \frac{\|A'_n\|}{\sqrt{n}} \ge M \Big\}, \quad n=1,2,\ldots
$$ 
where the minimum has the same meaning as before.
By Proposition~\ref{prop: global}, there exists $n_0$ such that
$$
\P(\EE_n^c) \le e^{-n} \text{ for all } n > n_0.
$$
In particular, the series $\sum_{n=1}^\infty \P(\EE_n^c)$ converges. Borel-Cantelli lemma then implies that
the probability that infinitely many $\EE_n^c$ occur is $0$. Equivalently, with probability $1$
there exists $N$ such that $\EE_n$ hold for all $n \ge N$. 

We have shown that for any $M>0$, with probability $1$ there exists $N$ such that 
$$
\min \frac{\|A'_n\|}{\sqrt{n}} \ge M \quad \text{for all } n \ge N.
$$
Intersecting these almost sure events for $M=1,2,\ldots$, we conclude \eqref{eq: A'n}. 
Theorem~\ref{global} is proved. \qed

\section{Removal of large entries under $2+\e$ moments} \label{s: twopluseps}
%--------------------

In this section we give a proof of Proposition~\ref{twoplus} based on a general bound on random matrices
due to A.~Bandeira and R.~van Handel \cite{BvH}.

\subsection{Proof of Proposition~\ref{twoplus}} 
Let us call an entry $A_{ij}$ {\em large} if $|A_{ij}| > R := n^{1/2 - \e/8}$, 
otherwise call the entry {\em small}. 
%$$
%\tilde{A}_{ij} := A_{ij} \ind_{\{|A_{ij}| \le R\}} \quad \text{ and } \quad A'_{ij} := A_{ij} \ind_{\{|A_{ij}| > R\}}.
%$$  

We claim that there are very few large entries with high probability, and we can check this 
by the same argument as in Lemma~\ref{lem: very large}. 
Indeed, the $2+\e$ moment assumption and Chebyshev's inequality give
\begin{equation}		\label{eq: entry large prob}
\Pr{A_{ij} \text{ is large}} = \Pr{|A_{ij}|>R} < \frac{1}{R^{2+\e}} \le \frac{1}{n^{1+\e/8}},
\end{equation}
where the last inequality follows by our choice of $R$.
Thus the expected number of large entries is at most $n^2/n^{1+\e/8} = n^{1-\e/8}$.
A standard application of Chernoff's inequality (see e.g. \cite[Chapter~2]{V HDP}) gives 
$$
\Pr{ \text{$A$ has more than $n^{1-\e/9}$ large entries} } 
\le \Big( \frac{e n^{1-\e/8}}{n^{1-\e/9}} \Big)^{n^{1-\e/9}},
$$
which can be further bounded by $\exp(-n^{1/2})$ if $n$ is sufficiently large in terms of $\e$.
Hence we can zero out all large large entries of $A$. It remains to show that
the result of this operation, which we denote by $\tilde{A}$,
has norm at most $6\sqrt{n}$ with high probability.

For convenience, let us subtract the mean, and first bound 
$$
G := \tilde{A} - \E \tilde{A},
$$
which is an $n \times n$ random matrix with independent mean zero entries $G_{ij}$. 
A theorem of A.~Bandeira and R.~van Handel (see \cite[Remark~3.13]{BvH}) states that for any $t > 0$, 
we have
\begin{equation}\label{bvh}
  \P\{\|G\| \ge 4\sigma + t \} \le n \exp( - c t^2 / \sigma_*^2),
\end{equation}
where $c>0$ is an absolute constant and
$$ 
\sigma^2 := \max_i \sum_j \E (G_{ij}^2), \qquad \sigma_* := \max_{ij} \|G_{ij}\|_{\infty}.
$$
In our case, 
$$
\E G_{ij}^2 = \Var(\tilde{A}_{ij}) \le \E \tilde{A}_{ij}^2 \le \E A_{ij}^2 \le 1
$$
(where we used the moment assumption) and
$$
\|G_{ij}\|_\infty = \|\tilde{A}_{ij} - \E \tilde{A}_{ij}\|_\infty \le 2 \|\tilde{A}_{ij}\|_\infty \le 2R.
$$
Hence 
$$
\s \le \sqrt{n} \quad \text{and} \quad \s_* \le 2R.
$$
Then, using \eqref{bvh} with $t=\sqrt{n}$, we conclude that
\begin{equation}		\label{eq: G norm tail}
\P\{\|G\| \ge 5\sqrt{n} \} \le n \exp( - cn/4R^2) 
\le \exp(-n^{\e/5})
\end{equation}
where the last inequality holds due to the definition of $R$, if $n$ is sufficiently large in terms of $\e$.

Finally, we need to bound the contribution of the mean $\E \tilde{A}$ which we subtracted in defining $G$.
This can be done by the exactly same argument as we used in the proof of Theorem~\ref{main} in 
Section~\ref{s: proof main}. We repeat it here for completeness. Note that all entries of $\E \tilde{A}$ are the same, thus
\begin{align}		\label{eq: E tilde A norm}
\|\E \tilde{A}\| 
  &= n \big|\E \tilde{A}_{ij}\big|
  = n \big|\E (A_{ij} - \tilde{A}_{ij})\big|		\quad \text{(since $\E A_{ij} = 0$)} \nonumber\\
  &= n \big|\E A_{ij} \ind_{|A_{ij}|>R} \big|	\quad \text{(by definition of $\tilde{A}$)} \nonumber\\
  &\le n \big( \E A_{ij}^2 \big)^{1/2} \big( \Pr{|A_{ij}|>R} \big)^{1/2} 
  	\quad \text{(by Cauchy-Schwarz)} \nonumber\\
 &\le n \cdot 1 \cdot n^{-1/2} \le n^{1/2},
\end{align}
where we used the moment assumption and a weaker form of the bound \eqref{eq: entry large prob}.

Concluding, it follows from \eqref{eq: G norm tail} and \eqref{eq: E tilde A norm}
that with probability at least $\exp(-n^{\e/5})$, we have
$$
\|\tilde{A}\| \le \|\tilde{A} - \E \tilde{A}\| + \|\E \tilde{A}\| \le 6\sqrt{n}.
$$
The proof of Proposition~\ref{twoplus} is complete.
\qed

\section{Further questions}		\label{s: questions}
%===============

Several extensions of Theorem~\ref{main} seem plausible. 

\begin{enumerate}[1.]

\item It is natural to expect a version Theorem~\ref{main} even if the entries of $A$ are {\em not identically distributed}. 
Our argument relies on the identical distribution in several places, including discretization arguments 
(proof of Theorem~\ref{thm: damping sum}) and symmetrization (proofs of Lemmas~\ref{lem: infty to 2 by permutations} 
and \ref{lem: infty to 2 by permutations high prob}). 

\item A version of Theorem~\ref{main} should hold for {\em symmetric matrices} $A$ with independent entries
on and above the diagonal. A simplest way to get this result would be to use Theorem~\ref{main}
to control the parts of $A$ above and below the diagonal separately, and then combine them. However, 
for this argument one would need a version of Theorem~\ref{main} for non-identical distributed entries.

\item Unlike Feige-Ofek's result \cite{FO} mentioned in Section~\ref{s: related}, Theorem~\ref{main} 
does not indicate what sub-matrix should be removed to improve the norm; it is rather an existential result. 
It would be nice to have an {\em explicit description of a submatrix to be removed}.

\item It would be good to {\em remove the logarithmic factor} $\ln \e^{-1}$ from the bound in Theorem~\ref{main}, 
or to show that this factor is necessary. Such bound would be optimal up to an absolute constant factor.

\item Finally, while Remark~\ref{rem: optimality} states
that {\em the dependence on $\e$} in Theorem~\ref{main} is optimal in general,
this dependence might be dramatically improved under a natural boundedness assumption. Namely, suppose that
the entries of $A$ are $O(\sqrt{n})$ almost surely. (In fact, most of the proof -- until Section~\ref{s: large entries} --
was done under this additional assumption.) In this case, is the dependence of the norm on $\e$ {\em logarithmic} in 
Theorem~\ref{main}, i.e. 
\begin{equation}\label{eq: bernoulli_bound_last}
\|\tilde{A}\| \leq C \ln(\e^{-1})\sqrt{n}?
\end{equation}
In fact, for the partial case of Bernoulli matrices such that $np = c_0 = const$ (where $p$ is a probability of a non-zero entry) this bound can be quickly deduced from Corollary \ref{cor: Bernoulli IJ removed}. 
 
Indeed, after renormalization that imposes matrix elements to have variance one (so we deal with the scaled Bernoulli matrix with $B_{ij} = O(p^{-1/2})$), we can see that such matrices satisfy the boundedness assumption, as $B_{ij} = O(p^{-1/2}) = O(\sqrt{n}/\sqrt{c_0}) = O(\sqrt{n})$.
 Then, by Corollary \ref{cor: Bernoulli IJ removed} after a deletion of $\e n \times \e n$ submatrix we get a matrix $\tilde{B}$ with all rows and columns having at most 
 $$21pn + 4 \ln\e^{-1} \le 100c_0\ln\e^{-1}$$
  non-zero elements of order $O(\sqrt{n})$. Hence, 
 $$
\max_{i,j} \left( \|\tilde{B}_i\|_1, \, \|\tilde{B}^j\|_1 \right) \lesssim \sqrt{n}\cdot \ln \e^{-1}.
$$
Applying Lemma~\ref{lem: norm L1 rows cols} leads to \eqref{eq: bernoulli_bound_last}.

\end{enumerate}

\end{document}